\newtheorem{thm}{Theorem}[section]
\newtheorem{conj}[thm]{Conjecture}
\newtheorem{lemma}[thm]{Lemma}
\newtheorem{prob}[thm]{Problem}
\newtheorem{observation}[thm]{Observation}
\theoremstyle{definition}
\newtheorem{defi}[thm]{Definition}
\def\ex{\text{ex}}
\def\cover{{\bf COVER}}
\def\factor{{\bf FACTOR}}
\def\trans{{\bf TRANS}}
\tikzstyle{aNode} = [circle, fill = black]
\tikzstyle{bNode} = [circle,draw = black, thick]
\newcommand{\ppoints}[1]{%
\begin{tikzpicture}[inner sep = 0.7pt, #1]%
\node (1) at (0,-2) [aNode]{};
\node (3) at (1.5,-2) [aNode]{};
\node (2) at (0.75,-1) [aNode]{};
\end{tikzpicture}%
}
\def\points{\ppoints{scale=0.11}}
\newcounter{casenum}
\newcommand*{\rom}[1]{\expandafter{\romannumeral #1\relax}}
\begin{document}
\title[$F$-factors in Quasi-random Hypergraphs]{$F$-factors in Quasi-random Hypergraphs}
\date{\today}
\author{Laihao Ding}
\author{Jie Han}
\author{Shumin Sun}
\author{Guanghui Wang}
\author{Wenling Zhou}

\address
{School of Mathematics and Statistics, and Hubei Key Laboratory of Mathematical Sciences, Central China Normal University,
Wuhan, China.}
\thanks{LHD is supported by the National Natural Science Foundation of China (11901226) and the China Postdoctoral Science Foundation (2019M652673).
JH is partially supported by Simons Collaboration Grant \#630884.
GHW is supported by the Natural Science Foundation of China (11871311) and  Young Taishan Scholars program of Shandong Province (201909001).}
\email[Laihao Ding]{dinglaihao@ccnu.edu.cn}
\address
{School of Mathematics and Statistics, Beijing Institute of Technology, Beijing, China.}
\email[Jie Han]{hanjie@bit.edu.cn}
\address
{School of Mathematics, Shandong University, Jinan, China.}
\email[Shumin Sun]{sunshumin987@163.com}
\email[Guanghui Wang]{ghwang@sdu.edu.cn}
\address
{School of Mathematics, Shandong University, Jinan, China, and Laboratoire Interdisciplinaire des Sciences du Num\'{e}rique, Universit\'{e} Paris-Saclay, France.}
\email[Wenling Zhou]{gracezhou@mail.sdu.edu.cn}
\date{\today}
\keywords{quasi-random hypergraph, $F$-factor, absorbing method, hypergraph regularity method}
%

\begin{abstract}
Given $k\ge 2$ and two $k$-graphs ($k$-uniform hypergraphs) $F$ and $H$, an \emph{$F$-factor} in $H$ is a set of vertex-disjoint copies of $F$ that together cover the vertex set of $H$.
Lenz and Mubayi [\textit{J. Combin. Theory Ser.~B, 2016}] studied the $F$-factor problem in quasi-random $k$-graphs with minimum degree $\Omega(n^{k-1})$. They posed the problem of characterizing the $k$-graphs $F$ such that every sufficiently large quasi-random $k$-graph with constant edge density and minimum degree $\Omega(n^{k-1})$ contains an $F$-factor, and in particular, they showed that all linear $k$-graphs satisfy this property.

In this paper we prove a general theorem on $F$-factors which reduces the $F$-factor problem of Lenz and Mubayi to a natural sub-problem, that is, the $F$-cover problem.
%
By using this result, we answer the question of Lenz and Mubayi for those $F$ which are $k$-partite $k$-graphs, and for all 3-graphs $F$, separately.
Our characterization result on 3-graphs is motivated by the recent work of Reiher, R\"odl and Schacht [\textit{J.~Lond.~Math.~Soc., 2018}] that classifies the 3-graphs with vanishing Tur\'an density in quasi-random $k$-graphs.

\end{abstract}

\maketitle


\section{Introduction}
Given $k\geq 2$, a \textit{$k$-uniform hypergraph} $H$ ($k$-graph for short) is a pair $H=(V(H),E(H))$ where $V(H)$ is a finite set of \textit{vertices} and $E(H)=\{e\subseteq V(H): |e|=k\}$ is a set of $k$-element subsets of $V(H)$, whose members are called the \textit{edges} of $H$. As usual $2$-graphs are simply called graphs.

Given a fixed graph $F$, a typical problem in extremal graph theory asks for the maximum number of edges that a large graph $G$ on $n$ vertices containing no copy of $F$ can have, denoted by $\ex(n, F)$.
The systematic study of such problems was initiated by Tur\'an~\cite{Turan}, who determined $\ex(n, K_k)$ for complete graphs $K_k$.
Targeting on the limit behavior, one may define the \emph{Tur\'an density}, $\pi(F):=\lim_{n\to\infty}\ex(n, F)/\binom n2$.
For arbitrary $F$, the Tur\'an density $\pi(F)$ was determined by Erd\H{o}s and Stone~\cite{P1963On}.
The Tur\'an problem for $k$-graphs is notoriously difficult -- despite much efforts and attempts so far, the Tur\'an density of tetrahedron, the smallest clique in 3-graphs, raised by Tur\'an in 1941, is still open~\cite{P1977On, Razborov2010}.

An important reason for the extreme difficulty in the hypergraph Tur\'an problems is the existence of certain \emph{quasi-random} construction obtained from random tournaments or random colorings of $E(K_n)$.
This behavior is quite different from the graph case as quasi-random graphs with positive density contain a correct number of copies of arbitrary graphs $F$ of fixed size, namely, the number of copies of $F$ is as expected in the random graph with the same density.
This suggests a systematic study of subgraph containment problems in quasi-random hypergraphs.
In fact, Erd\H{o}s and S\'os~\cite{Erd1982On} were the first to raise questions on the Tur\'an densities in ($p,\mu $)-dense 3-graphs (see Definition~\ref{def.}).
Their conjecture for the Tur\'an density of $K_4^{(3)-}$ (the unique 3-graph with 4 vertices and 3 edges) was solved only recently by Glebov, Kr\'al' and Volec~\cite{Glebov_2015}, and independently by Reiher, R\"odl and Schacht~\cite{Reiher_2018}.

\subsection{Quasi-random graphs and hypergraphs}
The study of quasi-random graphs was launched in late 1980s by Chung, Graham and Wilson~\cite{1988Quasi}.
These are constant density graphs which behave like the random graphs.
There is a list of properties that force a graph to be quasi-random and it was shown that these properties are all equivalent (up to a decay of the constants).
Among others, a notable application of quasi-random graphs is the upper bounds on the diagonal Ramsey numbers~\cite{ThomaAn,ConlonA,sah2020diagonal}.


The investigation of quasi-random $k$-graphs was started by Chung and Graham~\cite{chung1990quasi} and is widely popular   \cite{quasihyper,Ch90,Ch91,Chung10,wquasi,G06,G07,KNRS,KRS02,LenzMubayi_eig,lenz2015poset,RSk04,Towsner}.
As mentioned above, the subgraph containment problem for quasi-random $k$-graphs, $k\geq3$, is quite different from the case $k=2$ and
has been an important topic over decades.
Indeed, for $k\geq 3$, R\"odl noted that by a construction of~\cite{ErdosHajnal}, quasi-random $k$-graphs may not contain a single copy of, say, a $(k+1)$-clique.

The study of Tur\'an-type problems for quasi-random $k$-graphs was recently popularized by Reiher, R\"odl and Schacht~\cite{Reiher_2017}.
Besides a solution to the aforementioned conjecture of Erd\H{o}s and S\'os, they also determined a large collection of Tur\'an densities, under a family of naturally defined quasi-randomness conditions, see the recent survey of Reiher~\cite{Reiher_2020}.


Here we state a notion of quasi-randomness considered by Lenz and Mubayi, which will also be the main notion discussed in this paper.

\begin{defi}[($p,\mu $){-denseness}] \label{def.}
Given integers $n\geq k\geq2$, let $0<\mu,p <1$, and $H$ be a $k$-graph with $n$ vertices. We say that $H$ is ($p,\mu $)\emph{-dense} if for all not necessarily disjoint $X_1,\dots,X_k\subseteq V(H)$,
\begin{equation}\label{eq:count}
e_H(X_1,\dots,X_k)\geq p|X_1|\cdots|X_k|-\mu n^k,
\end{equation}
where $e_{H}(X_1,\dots,X_k)$ is the number of $(x_1,\dots,x_k)\in X_1\times \cdots \times X_k$ such that $\{x_1,\dots,x_k\}\in E(H)$.
\end{defi}


This is considerably weaker than other known quasi-randomness that have been discussed in the literature.
In particular, we call a $k$-graph $H$ weakly $(p,\mu )$\emph{-quasi-random} if for all not necessarily disjoint $X_1,\dots,X_k\subseteq V(H)$, $|e_H(X_1,\dots,X_k)- p|X_1|\cdots|X_k| |\le \mu n^k$.
The works of Kohayakawa {et al.}~\cite{KNRS} and Conlon {et al.}~\cite{wquasi}  show that weakly quasi-randomness is strongly related, in fact \emph{equivalent},
to the counting property of \emph{linear} $k$-graphs,
those in which any two edges intersect in at most one vertex.
Due to this connection weakly quasi-randomness is often referred to as \emph{linear quasi-randomness}, and clearly the notion of quasi-randomness in Definition~\ref{def.} is even slightly weaker.
We omit a throughout discussion on the strengths of different notions of quasi-randomness.

We also omit a discussion on the so-called \emph{pseudo-random} (hyper)graphs, which are random-like graphs with density $p=p(n)$ tends to 0 when $n$ goes to infinity, and refers to the excellent survey~\cite{KS06} of Krivelevich and Sudakov.

\subsection{{\emph F}-factors in quasi-random $k$-graphs}
In this paper we focus on a strengthening of the Tur\'an-type problems, namely, the $F$-factor problems.
Given two $k$-graphs $F$ and $H$, a \emph{perfect $F$-tiling $($or $F$-factor$)$} in $H$ is a set of vertex-disjoint copies of $F$ that together covers the vertex set of $H$.
The study of perfect tilings in graph theory has a long and profound history, ranging from classical results back to Corradi--Hajnal~\cite{Corr} and Hajnal--Szemer\'edi~\cite{zbMATH03344609} on $K_k$-factors to the celebrated result of Johansson--Kahn--Vu~\cite{Johansson2008Factors} on perfect tilings in random graphs.
We note that the $F$-factor problems for quasi-random graphs with constant density has been solved
implicitly by Koml\'{o}s--S\'{a}rk\"{o}zy--Szemer\'{e}di~\cite{Koml1997Blow} in the course of developing the famous Blow-up Lemma.

Lenz and Mubayi~\cite{Lenz2016Perfect} were the first to study the $F$-factor problems in quasi-random $k$-graphs, for $k\ge 3$.
In particular, they raised the following basic question.
Throughout the paper, we say that $H$ is an $(n,p,\mu)$ \emph{$k$-graph} if $H$ has $n$ vertices and is ($p,\mu $)-dense; moreover, given $\alpha \in (0,1)$, an ($n,p,\mu$) $k$-graph is an ($n,p,\mu,\alpha$) \emph{$k$-graph} if its each vertex is contained in at least $\alpha {n}^{k-1}$ edges. For convenience, for a $k$-graph $H$, let $v(H)=|V (H)|$.



\begin{prob}\label{prob}\cite{Lenz2016Perfect}~\label{prob}
Which $k$-graphs $F$ have the following property that for all $0< p,\alpha<1$, there is some $n_0\in \mathbb{N}$ and $\mu > 0$ so that if $H$ is any $(n,p,\mu,\alpha)$ $k$-graph with $n \geq n_0$ and $v(F)\mid n$, then $H$ has an $F$-factor?
\end{prob}

That is, Problem~\ref{prob} asks for a characterization of the $k$-graphs $F$ which allow degenerate (or vanishing) choices of $p$ and $\alpha$ to guarantee an $F$-factor in ($n,p,\mu,\alpha$) $k$-graphs.
Let $\factor_k$ be the collection of $k$-graphs $F$ satisfying the property above.
Lenz and Mubayi~\cite{Lenz2016Perfect} proved that all linear $k$-graphs belong to $\factor_k$ and provided a family of $3$-graphs not in $\factor_3$.
In particular, they showed that the complete $3$-partite $3$-graph $K_{2,2,2}$ is not in $\factor_3$ by providing a  parity-based construction~\cite[Theorem 5]{Lenz2016Perfect}.
At last, we quote from Lenz and Mubayi~\cite{Lenz2016Perfect} that ``solving Problem 2 (\emph{Problem~\ref{prob}}) will be a difficult project''.

An extension of their results on $F$-factors for linear $F$ to \emph{sparse quasi-random $k$-graphs} was obtained recently by H\`an, Han and Morris~\cite{HHM}.

%


\subsection{Our results}

In this paper we mainly study Problem~\ref{prob}.
We first discuss an interesting phenomenon that appears in $F$-factors in hypergraphs and random graphs and give a significant reduction to the $F$-factor problems (see Theorem~\ref{k-graph}).

\subsubsection{Cover barriers and a reduction of Problem~\ref{prob}.}
We first note that a natural barrier for perfect tilings:
\begin{quote}
    \emph{a trivial necessary condition for a $k$-graph $H$ to contain an $F$-factor is that every vertex of $H$ is covered by at least one copy of $F$}.
\end{quote}
This can be visualized as a set of (not necessarily vertex-disjoint or distinct) copies of $F$ whose union covers the vertex set of the host graph, so also called an \emph{$F$-cover}.
In particular, this necessary condition leads to a minimum degree threshold for $F$-factor\footnote{Given $F$ and an integer $n$ divisible by $|V(F)|$, one can define $m(F, n)$ as the smallest integer $m$ such that every $k$-graph $H$ on $n$ vertices where every vertex is in at least $m$ edges contains an $F$-factor. Then the minimum degree threshold for $F$-factor is defined as $\lim_{n\to\infty}{m(F, n)}/{\binom{n-1}{k-1}}$.} for certain 3-partite 3-graphs $F$ to be \emph{irrational} (see~\cite{Han2017Minimum}).
Moreover, it plays an important role in the existence of $F$-factors in random (hyper)graphs as in the celebrated result of Johansson--Kahn--Vu (which results a necessary log factor in their result), but was always innocently satisfied for the study of $F$-factors in dense graphs.
One should also notice the increasing ``difficulty'' in terms of the assumptions imposed of the Tur\'an problem for $F$, the $F$-cover problem and the $F$-factor problem -- the latter ones are presumably harder.

For our problem, the ``cover'' property plays an essential role.
To be more precise, we first note that being $(p,\mu)$-dense puts no control on individual vertices -- indeed, there might be a small linear-sized set of vertices which are completely out of control, e.g., they could be isolated vertices, and that is the reason why a minimum degree condition needs to be imposed.
However, a minimum degree of form $\Omega(n^{k-1})$ would only allow us to embed a limited structure in the \emph{link $(k-1)$-graph} of a given vertex, namely, to embed $F$, there must be a vertex $v\in V(F)$ such that $N_F(v)$ is a $(k-1)$-partite $(k-1)$-graph, where $N_F(v)$ is the neighbor set of $v$.

This motivates the following definition:
given integers $f\geq k \geq 2$, let $\cover_k$ be the collection of $k$-graphs $F$ satisfying the following property: for all $0< p,\alpha <1$, there is some $n_0$ and $\mu> 0$ so that if $H$ is an ($n,p,\mu,\alpha$) $k$-graph with $n \geq n_0$, then each vertex $v$ in $V(H)$ is contained in a copy of $F$, namely, $H$ has an $F$-cover.

Interestingly, in this paper we show that  in $(n,p,\mu,\alpha)$ $k$-graphs, essentially \emph{being able to cover every vertex by a copy of $F$} is enough to guarantee the existence of $F$-factors, bypassing the parity constructions given by Lenz and Mubayi.
On the other hand, we trivially have $\factor_k\subseteq \cover_k$.
Altogether we obtain the following.

\begin{thm}\label{k-graph}
For $k \geq 2$, $\factor_k= \cover_k$.
\end{thm}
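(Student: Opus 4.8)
The plan is to prove the nontrivial inclusion $\cover_k\subseteq\factor_k$, since $\factor_k\subseteq\cover_k$ is immediate (an $F$-factor is in particular an $F$-cover). So fix $F\in\cover_k$ with $f=v(F)$, fix $0<p,\alpha<1$, and let $H$ be an $(n,p,\mu,\alpha)$ $k$-graph with $f\mid n$ and $\mu$ suitably small. The goal is to produce an $F$-factor in $H$. I would deploy the \emph{absorbing method}: find a relatively small ``absorbing set'' $A\subseteq V(H)$ with the property that for \emph{every} not-too-large set $W\subseteq V(H)\setminus A$ with $f\mid |W|$, the induced subgraph $H[A\cup W]$ has an $F$-factor; then use a ``fractional/almost tiling'' argument to cover all but a tiny leftover $W$ of $V(H)\setminus A$ by vertex-disjoint copies of $F$, and finally absorb $W$ using $A$.

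The core of the argument — and the step I expect to be the main obstacle — is building the absorbing structure, because a single copy of $F$ has bounded size while the leftover $W$ is linear in $n$, so one must chain together many small absorbers. The standard device is: for each vertex $v$, call a set $S_v$ of $f-1$ vertices an \emph{absorber} for $v$ if both $H[S_v]$ contains a spanning copy of... wait, more precisely, if $H[S_v\cup\{u\}]$ has an $F$-factor for $u=v$, and additionally $H[S_v]$ itself can be tiled appropriately — the usual formulation asks for a set $T_v$ of size $f\cdot t$ (some constant $t$) such that both $H[T_v]$ and $H[T_v\cup\{v\}]$... this does not have the right divisibility. The correct and robust formulation here, following Lenz--Mubayi and the general machinery, is to work with \emph{absorbers that swallow a bounded-size set at a time}: one shows that for any $f$ vertices $v_1,\dots,v_f$ (or more flexibly, any $f$-set $Q$, not necessarily forming a copy of $F$) there are $\Omega(n^{cf})$ choices of a constant-size set $B$ such that both $H[B]$ and $H[B\cup Q]$ have $F$-factors; then a greedy/probabilistic selection (the ``absorbing lemma'' of R\"odl--Ruci\'nski--Szemer\'edi style) yields a single set $A$, $|A|=o(n)$ but $|A|\ge$ (something linear-ish over a constant), that can absorb any deficiency set $W$ with $|W|\le \gamma n$ and $f\mid|W|$ by partitioning $W$ into $f$-sets and feeding them one at a time to fresh absorbers inside $A$. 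Producing even a \emph{single} such absorber $B$ for an arbitrary $f$-set $Q$ is where the $\cover_k$ hypothesis must do its work: the membership $F\in\cover_k$ guarantees that each individual vertex of $Q$ lies in a copy of $F$, but one needs these $f$ copies to be vertex-disjoint and needs the ``slack'' vertices they use to themselves be tileable by $F$; this likely requires iterating the cover property inside suitably chosen subsets of $H$ (which remain $(p,\mu')$-dense with comparable minimum degree), plus a counting/supersaturation step to get $\Omega(n^{cf})$ many such $B$.

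For the ``almost-tiling'' step, the plan is to show that any $(n,p,\mu,\alpha)$ $k$-graph $H$ has a collection of vertex-disjoint copies of $F$ covering all but $o(n)$ vertices. The natural route is via a \emph{weighted/fractional $F$-tiling} argument: first argue $H$ has a fractional $F$-tiling of total weight close to $n/f$ (using the cover property together with $(p,\mu)$-denseness — every vertex is in many copies of $F$, since the cover property combined with supersaturation upgrades ``at least one copy'' to ``$\Omega(n^{f-1})$ copies'' through $X_i$-subset counting), and then convert the fractional tiling to an integral almost-perfect one by a standard random-subsampling or iterated-absorption-within-the-tiling trick. Alternatively one can invoke the hypergraph regularity method (the weak regularity lemma suffices at this density) to reduce to a near-perfect fractional tiling in a reduced hypergraph; the excerpt flags hypergraph regularity as a keyword, so this is the intended toolbox, but I would try to keep it lightweight since $(p,\mu)$-denseness already gives robust edge counts.

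Finally, I would assemble the three pieces: (i) extract the absorbing set $A$ with $|A|\le\gamma n/2$ say, using the absorbing lemma; (ii) apply the almost-tiling result to $H\setminus A$ to cover all but a set $W$ of at most $\gamma n/2$ vertices, adjusting so that $f\mid|W|$ (possible since $f\mid n$ and $f\mid$ each copy removed and $f\mid|A|$-able by construction); (iii) use the absorbing property of $A$ to find an $F$-factor of $H[A\cup W]$, completing the $F$-factor of $H$. The quantifier management (choosing $\mu\ll\gamma\ll$ the absorber density constants $\ll p,\alpha$, and ensuring all auxiliary subgraphs stay $(p,\mu')$-dense with min-degree $\Omega(n^{k-1})$ after deleting $o(n)$ vertices) is routine but must be done carefully; the genuine content is the absorber construction from the bare $\cover_k$ hypothesis, and I would expect the bulk of the paper's technical effort — and mine — to go there.
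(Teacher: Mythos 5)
Your outline — trivial inclusion $\factor_k\subseteq\cover_k$, then absorbing set plus almost-perfect tiling plus absorption — matches the paper's top-level structure. But there are two meaningful discrepancies, one cosmetic and one substantive.

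First, the almost-perfect tiling step does not require fractional relaxation, random sub-sampling, or regularity: the paper's argument is a one-line greedy. Because $\pi_{\points}(F)=0$, any induced subgraph of $H$ on $\omega n$ vertices is still $(p,\mu/\omega^k)$-dense, hence still contains a copy of $F$; so you can simply peel off disjoint copies of $F$ until fewer than $\omega n$ vertices remain. The machinery you propose here is overkill, though not wrong.

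Second, and this is the genuine gap: the absorbing step as you describe it is not carried out, and the route you sketch would not go through directly. You propose to show that for an \emph{arbitrary} $f$-set $Q$ there are $\Omega(n^{|B|})$ absorbers $B$ with $H[B]$ and $H[B\cup Q]$ both $F$-tileable, and then run the R\"odl--Ruci\'nski--Szemer\'edi probabilistic selection. But the cover hypothesis only guarantees (after a supersaturation argument, which the paper proves with hypergraph regularity as Lemma~\ref{supersaturation}) that each \emph{single vertex} lies in $\Omega(n^{f-1})$ copies of $F$. It does not hand you absorbers for arbitrary $f$-tuples; constructing those directly is essentially as hard as the full problem and is exactly where Lenz--Mubayi's parity obstruction lurks. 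The paper sidesteps this with Han's \emph{lattice-based absorption}: it establishes a weaker relation — two vertices $u,v$ are \emph{reachable} if many $(if-1)$-sets $W$ have $H[W\cup\{u\}]$ and $H[W\cup\{v\}]$ both $F$-tileable — and then shows (Lemmas~\ref{vertex-reachable}--\ref{S-closed}) that most vertices have linearly many reachable neighbors, partitions the vertex set into closed pieces (Lemma~\ref{partation}), and finally uses robust $F$-copies within and across the pieces to exhibit the transferral $\mathbf{u}_j-\mathbf{u}_\ell$ in the lattice $L^\lambda_{\mathcal P,F}(H)$, which by Lemma~\ref{V-closed} merges the pieces into a single closed set; only then does the black-box absorbing lemma (Lemma~\ref{absorb}) produce the set $W$ you want. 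Your proposal correctly flags the absorber construction as the crux, but leaves it as a gesture (``iterating the cover property'') rather than an argument, and the direct RRS-style route you hint at is not what makes the theorem true.
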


Theorem~\ref{k-graph} says that the $F$-factor problem in $(p,\mu)$-dense $k$-graphs stands close to that problem in binomial random $k$-graphs, namely, the Johansson-Kahn-Vu Theorem, where for both problems, the $F$-cover property appear as the ``bottle-neck'' -- a naive way to understand this phenomenon is to say that as long as the (quasi-)randomness forces the appearance of an $F$-cover, one can sort out the copies of $F$ and indeed have an $F$-factor.

Theorem~\ref{k-graph} serves as a general tool for the $F$-factor problem in $(p,\mu)$-dense $k$-graphs.
It reduces the $F$-factor problem to the $F$-cover problem, which is a natural strengthening of the Tur\'an-type problem concerning ``rooted-embedding'', namely, embed a $k$-graph with a prescribed root vertex.
Moreover, this natural reduction eliminates the difficulty arising from divisibility issues (known as \emph{divisibility barriers} in the Dirac-type setting) and sheds light on the study of $F$-factors in quasi-random $k$-graphs.
Our two other main results (Theorems~\ref{k-partite} and~\ref{equal}) of this paper are then good examples for applications of Theorem~\ref{k-graph}.

\subsection{A characterization of $\factor_k$ for $k$-partite $k$-graphs $F$}
Recall that Lenz and Mubayi showed that all linear $k$-graphs are in $\factor_k$.
Linear $k$-graphs were the first class of $k$-graphs that was considered because $(p,\mu)$-denseness implies the correct counts of them.
To attack Problem~\ref{prob}, another natural class of $k$-graphs to consider is the \emph{$k$-partite $k$-graphs}.
A $k$-graph $H$ is called $k$-partite if its vertex set can be partitioned into $k$ parts, namely, $V(H)=V_1\cup V_2\cup \cdots \cup V_k$, such that $E(H)\subseteq V_1\times V_2\times \cdots \times V_k$.
One reason to consider $k$-partite $k$-graphs is that their Tur\'an densities are all zero by an old result of Erd\H{o}s~\cite{Onextremal}.

As an application of Theorem~\ref{k-graph}, we obtain the following characterization theorem for $F\in \factor_k$ when $F$ is $k$-partite. Given $k\geq 2$, for a set $V$, we use $[V]^k$ to denote the collection of all subsets of $V$ of size $k$. We may drop one pair of brackets and write $[\ell]^k$ instead of $[[\ell]]^k$.
Given a $k$-graph $H$ and a subset $S\in [V(H)]^s$, the degree of $S$, denoted by $\deg_H(S)$ or $\deg(S)$, is the number of edges of $H$ containing $S$ as a subset. Let $N_H(S)$ or $N(S)$ denote the neighbor set of $S$, i.e., $N_H(S)=\{ S' \in [V(H)]^{k-s} : S'\cup S\in E(H)\}$.
The minimum $s$-degree $\delta_s(H)$ is the minimum of $\deg(S)$ over all $s$-subsets $S$ of $V(H)$.

\begin{thm}\label{k-partite}
For $k \geq 3$, a $k$-partite $k$-graph $F\in\factor_k$ if and only if there exists $v^*\in V(F)$ such that $|e\cap e'|\leq 1$ for any two edges $e,e'$ with $v^*\in e$ and $v^*\notin e'$.
\end{thm}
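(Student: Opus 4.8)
We prove Theorem~\ref{k-partite} by treating the two implications separately. For the ``if'' direction we invoke Theorem~\ref{k-graph} to pass from the $F$-factor problem to the $F$-cover problem, and for the ``only if'' direction we argue by contraposition, building an explicit quasi-random host with no $F$-factor. Fix once and for all the parts $V(F)=V_1\cup\cdots\cup V_k$ of our $k$-partite $k$-graph $F$; for a vertex $v^*$, say $v^*\in V_1$, write $A:=V(N_F(v^*))\subseteq V_2\cup\cdots\cup V_k$ and $B:=V(F)\setminus(\{v^*\}\cup A)$. Note two facts that make the $k$-partite case accessible. First, since $F$ is $k$-partite, $N_F(v^*)$ is automatically a $(k-1)$-partite $(k-1)$-graph (its edges lie in $V_2\times\cdots\times V_k$); in the general, non-partite, case one must add the hypothesis that some such link be partite. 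Second, any edge of $F$ avoiding $v^*$ contains a vertex of $V_1\setminus\{v^*\}\subseteq B$, so it meets $A$ in at most $k-1$ vertices.

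For the sufficiency direction, assume $v^*$ has the stated property. By Theorem~\ref{k-graph} it suffices to show $F\in\cover_k$, i.e.\ that for suitably small $\mu$ and large $n$, every $(n,p,\mu,\alpha)$ $k$-graph $H$ has each vertex $v$ inside a copy of $F$. The plan is a rooted embedding with $v^*\mapsto v$, carried out in two coupled stages. In the first stage we handle the edges through $v^*$: the link $N_H(v)$ is a $(k-1)$-graph with at least $\alpha n^{k-1}$ edges, hence of positive density, so by the theorem of Erd\H{o}s on the vanishing Tur\'an density of partite hypergraphs together with a supersaturation argument, $N_H(v)$ contains $\Omega(n^{|A|})$ labelled copies of the $(k-1)$-partite $(k-1)$-graph $N_F(v^*)$. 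We then fix such a copy $\phi_0\colon A\hookrightarrow V(H)$ that is generic in the sense that, for every edge $e'$ of $F$ missing $v^*$, the set $\phi_0(e'\cap A)$ has codegree at least $\gamma n^{k-|e'\cap A|}$ in $H$; this is possible because, for $\mu$ small relative to $p,\alpha,\gamma$, the number of $t$-subsets of $H$ with codegree below $\gamma n^{k-t}$ is only $O(\mu)\,n^t$ (a standard consequence of Definition~\ref{def.}), so a counting argument shows that all but an $O(\mu)$-fraction of the $\Omega(n^{|A|})$ copies are generic. In the second stage we extend $\phi_0$ to an embedding of $B$ realising all edges $e'$ of $F$ with $v^*\notin e'$: each such $e'$ now has its pinned part $\phi_0(e'\cap A)$ of guaranteed large codegree and at least one free vertex in $B$, and these vertices are placed one edge at a time inside the (still positive-density) co-links of $H$, using Definition~\ref{def.} with index sets of linear size. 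The hypothesis $|e\cap e'|\le 1$ for $e\ni v^*$, $e'\not\ni v^*$ is precisely what makes the two stages compatible: it controls how the link edges sit inside the edges missing $v^*$, so that the extension never forces a single vertex of the embedding to lie simultaneously in the co-links of several pinned sets --- the phenomenon that, in $K_{2,2,2}$, makes the extension impossible because $(p,\mu)$-denseness grants no control over individual codegrees.

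For the necessity direction, suppose towards a contradiction that $F\in\factor_k$ yet for every $v^*\in V(F)$ there are edges $e\ni v^*$ and $e'\not\ni v^*$ with $|e\cap e'|\ge 2$; equivalently, for each $v^*$ there is a pair $\{x,y\}\subseteq V(F)\setminus\{v^*\}$ lying in an edge both with and without $v^*$. We construct, for infinitely many $n$ with $v(F)\mid n$, an $(n,p,\mu,\alpha)$ $k$-graph with no $F$-factor, generalising the $K_{2,2,2}$-construction of Lenz and Mubayi~\cite{Lenz2016Perfect}. The point is that such a shared pair makes any copy of $F$ in a carefully designed host ``rigid'', in the way that the twin pairs inside a part of $K_{2,2,2}$ are forced to the same side of a bipartition: starting from a quasi-random host one partitions the vertex set into two classes, perturbs the edge set in a quasi-random fashion (for instance by blowing up a structure on the links of the relevant vertices, or by an arithmetic twist), so that every copy of $F$ meets one class in a number of vertices lying in a fixed residue class modulo some integer $m$; choosing the sizes of the two classes so that $n$ is incongruent to $(n/v(F))$ times that residue then precludes an $F$-factor. (Alternatively one may phrase the obstruction as a single uncovered vertex and quote Theorem~\ref{k-graph}.)

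The hard part, and the place where the intersection condition really does its work, is the extension step of the sufficiency direction: one must marry the copy of $N_F(v^*)$ living inside the link of $v$ with an embedding of the rest of $F$ that survives the extreme weakness of $(p,\mu)$-denseness, and verify that ``every edge through $v^*$ meets every edge avoiding $v^*$ in at most one vertex'' is exactly the condition keeping every codegree invoked during the extension controllably large after a generic first-stage choice. On the necessity side the difficulty is different: producing one construction that works uniformly for all $k$-partite $F$ failing the condition --- Lenz and Mubayi treat only $K_{2,2,2}$ --- which amounts to isolating the correct notion of rigidity forced by a shared pair and the correct quasi-random perturbation that realises it while preserving $(p,\mu)$-denseness and the linear minimum degree.
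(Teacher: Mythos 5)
Your ``if'' direction is the one with a genuine gap. You reduce to $F\in\cover_k$ via Theorem~\ref{k-graph}, which is exactly what the paper does, but the rooted-embedding argument you sketch for the cover step does not go through in the $(p,\mu)$-dense setting. After finding a generic copy $\phi_0$ of $N_F(v^*)$ inside the link $N_H(v)$ and arranging that each pinned set $\phi_0(e'\cap A)$ has large codegree, you then want to embed $B$ ``one edge at a time inside the co-links.'' The problem is that a vertex $b\in B$ is typically subject to several simultaneous constraints (it must lie in the co-links of several pinned sets \emph{and} close edges among the not-yet-placed vertices of $B$), and $(p,\mu)$-denseness gives no quasi-randomness of those co-links --- only a crude edge count. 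The paper emphasizes precisely this obstacle (it is not clear that constant-sized induced sub-hypergraphs, or links and co-links, inherit $(p,\mu)$-denseness), and that is why both Lemma~\ref{supersaturation} and the rooted embedding for $k$-partite $F$ are carried out with the hypergraph regularity machinery: one cluster $V_0$ of clones of $v$, a regular polyad $\mathcal{H}_1^*$ on $V_0,V_1,\dots,V_{k-1}$ for the edges through $v^*$, a second regular polyad $\mathcal{H}_2^*$ on $V_1,\dots,V_k$ for the edges missing $v^*$, then the Restriction Lemma and the Counting Lemma on a $(v(F),k)$-complex built by merging cells from both. Relatedly, you misidentify the role of the hypothesis $|e\cap e'|\le 1$: it is not protecting codegrees of pinned sets, it is guaranteeing that when cells $\mathcal{C}(e)$ (chosen from $\mathcal{H}_1^*$) and $\mathcal{C}(e')$ (chosen from $\mathcal{H}_2^*$) are glued into a single complex, no pair of shrunken clusters $V_{i_1}^{j_1},V_{i_2}^{j_2}$ is asked to carry two different cells. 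If $|e\cap e'|\ge 2$ were allowed, the two polyads would collide on a pair of clusters, and the merged complex would fail to be regular.

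Your ``only if'' direction is also on a different --- and, as sketched, incomplete --- track. You propose a parity-type perturbation in the spirit of Lenz--Mubayi's $K_{2,2,2}$ example, but leave the actual construction to a vague gesture (``arithmetic twist,'' ``blow up a structure on the links''); producing such a parity obstruction uniformly for every $k$-partite $F$ failing the condition is exactly the non-trivial part. The paper sidesteps parity altogether: Lemma~\ref{k-construction} builds an $(n,p^*,\mu,\alpha)$ $k$-graph $H$ via a random $(r+1)$-coloring of $E(K_n)$ adapted to a partition $\{V_1,\dots,V_{k-1},\{z\}\}$, so that $N_H(z)\subseteq V_1\times\cdots\times V_{k-1}$ and any two edges sharing at least two vertices have the same index vector. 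Because $F\in\factor_k$, $H$ has an $F$-factor, hence some copy of $F$ covers $z$; that copy inherits the structural property, which after specializing to $k$-partite $F$ is exactly ``$|e\cap e'|\le 1$ whenever $v^*\in e$, $v^*\notin e'$.'' This derives the necessity directly from a single probabilistic construction, without any modular bookkeeping of copy counts.
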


\subsubsection{A characterization of $\factor_3$.}

We now turn to give a complete answer to Problem~\ref{prob} when $k=3$, namely, a full characterization of $\factor_3$.
We start with a formal definition of Tur\'an densities in $(p,\mu)$-dense $k$-graphs, borrowed from~\cite{Reiher2018Hypergraphs}.
Given a $k$-graph $F$, we define:
\begin{equation*}
\begin{split}
\label{dot-turan-dense}
\pi_{\points}(F) = \sup \{ p\in [0,1] & : \text{for\ every\ } \mu>0 \ \text{and\ } n_0\in \mathbb{N},\ \text{there\ exists\ an\ } F \text{-free} \\
&\quad (n, p,\mu)\ k \text{-graph\ } H\ \text{with\ } n\geq n_0 \}.
\end{split}
\end{equation*}

By $(p,\mu)$-denseness, every $(n, p,\mu)$ $k$-graph $H$ has at most $(2\mu/p) n$ vertices of degree lower than $p^2 n^{k-1}$.
Indeed, let $X$ be the set of all such low degree vertices and we have
\[
(k-1)!|X|p^2 n^{k-1}\ge e(X, V(H),\dots, V(H)) \ge p |X| n^{k-1} - \mu n^k,
\]
then the claim follows.
Therefore, for any $F$ with $\pi_{\points}(F)>0$, the $F$-free $(n, p,\mu)$ $k$-graph certifying this fact can be modified to obtain an $(n', p,\mu', p^2)$ $k$-graph by the deletion of up to $(2\mu/p) n$ vertices.
This implies that in the definition of $\pi_{\points}(F)$ above it is equivalent to consider the collection of all $(n, p,\mu,\alpha)$ $k$-graphs.
In particular, we infer that all $F\in \factor_k$ (or $\cover_k$) must satisfy $\pi_{\points}(F)=0$.
The following celebrated characterization of $3$-graphs $F$ with $\pi_{\points}(F)=0$ was recently obtained by Reiher, R\"odl and Schacht~\cite{Reiher2018Hypergraphs}.
For a $k$-graph $F$, let $\partial F:=\{S\in [V(F)]^{k-1}: \exists E\in E(F), S\subseteq E\}$ be the \emph{shadow} of $F$.

\begin{thm}\cite{Reiher2018Hypergraphs}\label{0}
For a $3$-graph $F$, the following are equivalent:
\begin{enumerate}[label=$(\alph*)$]
  \item[{\rm (a)}] $\pi_{\points}(F)=0$;
  \item[{\rm (b)}] there is an enumeration of the vertex set $V(F)=\{v_1,v_2,\dots,v_f\}$ and there is a $3$-coloring $\varphi:\partial F \longmapsto \{{\color {red}red},~{\color{blue}blue},~{\color{green}green}\}$ of the pairs of vertices covered by hyperedges of $F$ such that every hyperedge $\{v_i,v_j,v_k\}\in E(F)$ with $i<j<k$ satisfies: \label{item:b}
\end{enumerate}
\[
\varphi(v_i,v_j)={\color {red}red},\  \ \varphi(v_i,v_k)={\color{blue}blue},\ \ \varphi(v_j,v_k)={\color{green}green}.
\]
\end{thm}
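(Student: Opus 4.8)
The statement is an equivalence, and I would handle the two implications separately.

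\textbf{Proving $\lnot$(b)$\,\Rightarrow\,\lnot$(a) by an explicit construction.} Suppose $F$ does not satisfy (b); I would deduce $\pi_{\points}(F)>0$ by producing, for every $\mu>0$ and every $n_0$, an $F$-free $(1/27,\mu)$-dense $3$-graph on some $n\ge n_0$ vertices. Take $n$ large, let $c$ be a uniformly random $\{1,2,3\}$-colouring of the $\binom n2$ unordered pairs of $[n]$ (independently), and let $H_c$ be the $3$-graph on $[n]$ in which $\{x,y,z\}$ with $x<y<z$ is an edge precisely when $c(\{x,y\})=1$, $c(\{x,z\})=2$ and $c(\{y,z\})=3$. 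First, $H_c$ is $F$-free for \emph{every} choice of $c$: an embedding $\psi\colon V(F)\hookrightarrow[n]$ would yield an enumeration of $V(F)$ (order the vertices by their $\psi$-values) together with the colouring $\varphi(\{v_i,v_j\}):=c(\{\psi(v_i),\psi(v_j)\})$ of $\partial F$ (reading $1,2,3$ as red, blue, green), and by construction every edge of $F$, read off in this order, would be coloured (red,~blue,~green) --- i.e.\ this enumeration and $\varphi$ would witness (b) for $F$, a contradiction. Second, $H_c$ is $(1/27,\mu)$-dense with high probability once $n$ is large: for fixed $X_1,X_2,X_3\subseteq[n]$ a random distinct triple of values, once sorted, lies in $E(H_c)$ with probability $1/27$, so $\mathbb E\,e_{H_c}(X_1,X_2,X_3)\ge\frac1{27}|X_1||X_2||X_3|-O(n^2)$; since re-colouring one pair changes $e_{H_c}(X_1,X_2,X_3)$ by at most $6n$, a bounded-differences inequality gives, for each of the $8^n$ triples $(X_1,X_2,X_3)$, probability at most $\exp(-\Omega(\mu^2 n^2))$ that $e_{H_c}(X_1,X_2,X_3)<\frac1{27}|X_1||X_2||X_3|-\mu n^3$, which beats the union bound. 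Hence $\pi_{\points}(F)\ge 1/27$, so (a) fails.

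\textbf{Proving (b)$\,\Rightarrow\,$(a) by embedding $F$ in every dense $3$-graph.} Unwinding the definition of $\pi_{\points}$, it suffices to show that for each $p\in(0,1)$ there are $\mu>0,n_0$ so that every $(n,p,\mu)$ $3$-graph $H$ with $n\ge n_0$ contains a copy of $F$. Fix the enumeration $v_1,\dots,v_f$ and colouring $\varphi$ from (b). The structural fact I would exploit is: for each $j$, every edge $\{v_a,v_b,v_j\}\in E(F)$ with $a<b<j$ has $\varphi(v_av_b)$ red, $\varphi(v_av_j)$ blue, $\varphi(v_bv_j)$ green, so $v_j$ is the largest vertex of each edge in which it occurs once $v_1,\dots,v_{j-1}$ are fixed, and the pairs $\{v_a,v_b\}$ arising there form a \emph{bipartite} graph $L_j$ between $B_j=\{v_a:\varphi(v_av_j)\text{ blue}\}$ and $G_j=\{v_b:\varphi(v_bv_j)\text{ green}\}$ (disjoint, since a pair receives one colour). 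The natural plan is an induction on $f=v(F)$ with a strengthened conclusion: that $H$ contains not merely one but a ``robustly well-distributed'' family of copies of $F$ inside any prescribed linear-sized sets $W_1,\dots,W_f$. The base cases are immediate (for a single edge it is \eqref{eq:count}). For the inductive step I would delete the top vertex $v_f$; then $F^-:=F-v_f$ again satisfies (b), so by induction $H$ contains robustly many copies $\psi^-$ of $F^-$, and it remains to extend a positive fraction of them by linearly many vertices $y\in W_f$ with $y\in\bigcap_{\{v_a,v_b\}\in L_f}N_H(\{\psi^-(v_a),\psi^-(v_b)\})$. Here the bipartiteness of $L_f$ is decisive: the $3$-graph $\widehat L_f$ obtained from $L_f$ by adding an apex joined to every edge of $L_f$ is \emph{$3$-partite} (vertex classes $B_f,G_f,\{\text{apex}\}$), so $\widehat L_f$ has positive density in any $(n,p,\mu)$ $3$-graph (iterate Cauchy--Schwarz from \eqref{eq:count}); playing this against the robust distribution of the $\psi^-$'s should force many of them to admit linearly many valid $y$, with the resulting copies of $F$ again robustly distributed, closing the induction.

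\textbf{The main obstacle.} All the difficulty sits in the second implication, and specifically in choosing the ``robustly well-distributed'' invariant correctly: it must be (i) true for a single edge as a consequence of \eqref{eq:count}, (ii) preserved when one caps off a bipartite link with an apex, and (iii) strong enough that ``robustly many copies of $F^-$'' genuinely upgrades to ``robustly many copies of $F$'' rather than merely restating it --- and it is exactly the bipartite structure of the links $L_j$, which is the combinatorial content of (b), that makes such an invariant possible. Note that a plain greedy vertex-by-vertex embedding cannot be carried out naively, because \eqref{eq:count} gives no control at all on the codegree $N_H(\{x_a,x_b\})$ of an individual pair (the error term $\mu n^3$ dominates as soon as a coordinate is a single vertex), so one must keep whole candidate sets alive and select pairs of vertices jointly rather than committing to them one at a time; organising this bookkeeping so that it survives the induction is the technical heart of the proof.
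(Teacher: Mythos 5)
The paper does not actually prove this theorem --- it is imported verbatim from Reiher--R\"odl--Schacht (the citation \texttt{\cite{Reiher2018Hypergraphs}} in the theorem header is the whole ``proof''). So there is no internal proof to compare against; what follows assesses your proposal on its own merits and against the R--R--S argument.

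Your construction for $\lnot$(b)$\Rightarrow\lnot$(a) is correct and is exactly the standard random-$3$-colouring construction (the same device, enriched with a bipartition and more colours, is what this paper itself uses in the proof of Lemma~\ref{k-construction}). The $F$-freeness-for-all-$c$ observation and the McDiarmid/union-bound computation giving $(1/27,\mu)$-denseness are sound.

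The gap is in (b)$\Rightarrow$(a). You have correctly diagnosed the obstruction (no codegree control from $(p,\mu)$-denseness alone, so no greedy vertex-by-vertex embedding), but the proposed fix --- an induction on $v(F)$ carrying an unspecified ``robustly well-distributed'' invariant through the deletion of the top vertex $v_f$ --- is not a proof; it is a restatement of the difficulty. Two specific problems. First, the invariant is never defined, and it is precisely its definition that does all the work: one needs something that simultaneously survives $(p,\mu)$-denseness at the base case, is stable under capping a bipartite link with an apex, and is strong enough to extract linearly many extensions to $v_f$ from ``robustly many'' embeddings of $F^-$ --- you flag this yourself as ``the technical heart.'' Second, the fact that $\widehat L_f$ is $3$-partite (and hence has $\pi_{\points}=0$) tells you that copies of $\widehat L_f$ are plentiful in $H$, but it does not tell you that a \emph{prescribed} embedding of $L_f$ (namely the one induced by a particular copy of $F^-$ you have already found) extends to an apex; those are different statements, and bridging them is exactly where a regularity-type structure theorem is used. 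The R--R--S proof of this direction goes through the $3$-graph regularity lemma: regularise the putative $F$-free $(p,\mu)$-dense $H$, show the reduced hypergraph is $\eta$-dense, invoke a Ramsey-type lemma on reduced hypergraphs (Lemma~\ref{Rembed} in this paper) to locate a consistently coloured clique of polyads, and then apply the embedding lemma (Lemma~\ref{EL}) to pull $F$ back into $H$. If you want an elementary route avoiding regularity, you would at a minimum need to make the Cauchy--Schwarz/counting machinery precise enough to play the role of the embedding lemma --- and for general $F$ satisfying (b), as opposed to the linear or $3$-partite special cases, I do not believe that is known to work.
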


Using Theorem~\ref{k-graph} and some ideas in the proof of Theorem~\ref{0} from~\cite{Reiher2018Hypergraphs}, we are able to give an explicit characterization of the 3-graphs in $\factor_3$.
\begin{thm}\label{equal}
A $3$-graph $F\in \factor_3$ if and only if it satisfies the following properties.
\begin{enumerate}[label=$(\roman*)$]
  \item $\pi_{\points}(F)=0$;\label{item:i}
  \item there is a vertex $v^*\in V(F)$ and a partition $\mathcal{P}=\{X,Y, \{v^*\}\}$ of $V(F)$ such that $N(v^*)\subseteq X\times Y$ and for any $x\in X$ and $y\in Y$, $N(x)$, $N(y)$ and $N(v^*)$ are pairwise disjoint. \label{item:ii}
\end{enumerate}
\end{thm}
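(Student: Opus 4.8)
The plan is to use Theorem~\ref{k-graph}, which reduces the problem to showing that $F\in\cover_3$ if and only if conditions~\ref{item:i} and~\ref{item:ii} hold. The necessity of~\ref{item:i} was already observed in the discussion preceding Theorem~\ref{0}: every $F\in\cover_3$ satisfies $\pi_{\points}(F)=0$. For the necessity of~\ref{item:ii}, I would argue that to cover an arbitrary vertex $v$ by a copy of $F$, the image of $v$ must play the role of some $v^*\in V(F)$, and the only structure one can reliably find in the link of a single vertex of an $(n,p,\mu,\alpha)$ $3$-graph is a (large but) $2$-partite $2$-graph. Concretely, I would take a host $3$-graph that is $(p,\mu)$-dense and has minimum degree $\Omega(n^2)$ but whose link graph at some vertex is essentially bipartite (for instance, a random tournament–type construction), forcing $N_F(v^*)$ to embed into a bipartition $X\times Y$; iterating this kind of obstruction around the copy of $F$ forces the disjointness of $N(x)$, $N(y)$, $N(v^*)$ for $x\in X$, $y\in Y$. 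This is where I expect to lean most heavily on the constructions and ideas from~\cite{Reiher2018Hypergraphs}, adapting the $3$-colouring $\varphi:\partial F\to\{\text{red},\text{blue},\text{green}\}$ of Theorem~\ref{0} to keep track of which pairs land in which ``colour class'' of the host.

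For sufficiency, assume $F$ satisfies~\ref{item:i} and~\ref{item:ii}; I must show that every vertex $v$ of an $(n,p,\mu,\alpha)$ $3$-graph $H$ lies in a copy of $F$. The strategy is a greedy/embedding argument that builds the copy of $F$ starting from $v^*\mapsto v$. First I would use the minimum degree condition $\delta_1(H)\ge\alpha n^2$ together with $(p,\mu)$-denseness to find, inside the link graph $L=N_H(v)$, a large ``quasi-random bipartite-like'' pair $(A,B)$ with $|A|,|B|=\Omega(n)$ and pair-density bounded below; this embeds the vertices of $X$ into $A$ and of $Y$ into $B$, realising all edges of $F$ through $v^*$ (these are exactly the pairs in $N(v^*)\subseteq X\times Y$). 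Then, because $\pi_{\points}(F)=0$ and more importantly because the remaining part of $F$ — after deleting $v^*$ — inherits a valid $3$-colouring from Theorem~\ref{0}, I would extend the partial embedding to all of $V(F)$ one vertex at a time, at each step choosing an image in a common neighbourhood that remains linear-sized; the disjointness clause in~\ref{item:ii} is precisely what guarantees that the ``colour-class'' requirements at $v^*$, at the $X$-vertices and at the $Y$-vertices do not conflict, so the colour-coded greedy embedding of~\cite{Reiher2018Hypergraphs} goes through with $v^*$ pinned. Thus $F\in\cover_3$, and Theorem~\ref{k-graph} gives $F\in\factor_3$.

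The main obstacle I anticipate is the rooted nature of the embedding in the sufficiency direction: Theorem~\ref{0} produces a copy of $F$ somewhere in $H$, but here the copy must pass through a prescribed vertex $v$ whose link $L$ we do not control beyond its size and the global density bound. Handling this requires first extracting from $L$ a subpair that is itself quasi-random enough to serve as the ``ground floor'' of the colour-coded embedding, and then verifying that pinning $v^*$ does not destroy the inductive invariants (linear-sized candidate sets, correct colour pattern) that drive the argument of~\cite{Reiher2018Hypergraphs}. Condition~\ref{item:ii} is tailored exactly so that this pinning is harmless — the neighbourhoods $N(v^*)$, $N(x)$, $N(y)$ being pairwise disjoint means the three colour roles red/blue/green can be assigned consistently with $v^*$ fixed — so the bulk of the work is checking that this combinatorial compatibility translates into a valid execution of the embedding scheme, and dually, that its failure yields one of the parity- or tournament-based host constructions ruling $F$ out of $\cover_3$.
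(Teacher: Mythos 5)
Your reduction to $F\in\cover_3$ via Theorem~\ref{k-graph} is right, and you correctly identify the central difficulty (a \emph{rooted} embedding with $v^*$ pinned to a prescribed vertex $w$ whose link we do not control). However, the sufficiency direction as you describe it has a real gap, and it mischaracterizes the source you are leaning on.

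First, the method of Reiher--R\"odl--Schacht behind Theorem~\ref{0} is not a ``colour-coded greedy embedding''; it is a hypergraph regularity argument. Their proof of $\pi_{\points}(F)=0 \Leftrightarrow$ (b) runs through the regularity lemma for $3$-graphs, a notion of \emph{reduced hypergraph} over the index set of clusters, a filtering lemma (the paper's Lemma~\ref{Rembed}) that locates a ``rainbow'' system of cells $P^{\lambda(r)\lambda(s)}_{\rm red}, P^{\lambda(r)\lambda(t)}_{\rm blue}, P^{\lambda(s)\lambda(t)}_{\rm green}$ whose polyads are all dense and regular, and finally an embedding lemma. Nowhere is there a greedy vertex-by-vertex argument, and indeed a greedy scheme is unlikely to survive here: $(p,\mu)$-denseness is a \emph{global lower bound} on $e_H(X_1,X_2,X_3)$ for linear-sized $X_i$, and it says nothing about the internal structure of the link $L=N_H(w)$ of a fixed vertex beyond $|L|\geq\alpha n^2$. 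Your claim that one can extract from $L$ a ``quasi-random bipartite-like pair'' and then keep linear-sized common neighbourhoods at every step is exactly the kind of ``local inheritance'' that the paper explicitly flags as unavailable in this setting (this is the reason Lemma~\ref{supersaturation} is also proved via regularity rather than via the hypergeometric/supersaturation trick).

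Second, even granting a regularity approach, the rooting must be handled by a specific device: one applies the regularity lemma to the auxiliary $3$-graph $H_w$ obtained by duplicating $w$ a linear number of times (so that $w$'s clones form an entire cluster $V_0$), then ``cleans'' the partition (Lemma~\ref{clean}), and works in the reduced hypergraph over $\{0\}\cup[2C]$. The paper then needs a \emph{new} filtering lemma (Lemma~\ref{biembed}) in addition to Lemma~\ref{Rembed}: condition~\ref{item:ii} gives not just a $3$-colouring of $\partial F$ but a bipartite index structure $X,Y$ compatible with $v^*$ (Lemma~\ref{alter}), and Lemma~\ref{biembed} is what allows one to pick clusters $\mathcal X\subseteq[C]$, $\mathcal Y\subseteq[2C]\setminus[C]$ so that all the cross-polyads through $V_0$ are dense, while Lemma~\ref{Rembed} handles the polyads internal to each side. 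Your sketch contains none of this machinery, and the steps ``find a bipartite-like pair in $L$'' and ``greedily extend while keeping candidate sets linear'' do not constitute a valid substitute.

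Finally, on necessity: your idea of a tournament/random-colouring host construction is in the right spirit, but the construction in the paper (Lemma~\ref{k-construction}) is different from adapting the three-colour $\varphi$ of Theorem~\ref{0}. It colours $E(K_n)$ with $r+1=\binom{2k-2}{k}+1$ colours indexed by the possible index vectors of a $k$-set with respect to a fixed partition $\{V_1,\dots,V_{k-1},\{z\}\}$, and keeps a $k$-set as an edge only when its $K[e]$ is monochromatic in the colour matching its index vector. This forces every two edges of $H$ sharing at least two vertices to have the same index vector, and it is this property (together with $N_H(z)\subseteq V_1\times\cdots\times V_{k-1}$) which is transported to the copy of $F$ through $z$ and yields~\ref{item:ii}. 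The RRS three-colouring construction establishes $\pi_{\points}(F)>0$ for certain $F$ but is not set up to also enforce the minimum degree condition with a vertex $z$ of bipartite link, which is what makes the necessity of~\ref{item:ii} go through.
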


Theorem~\ref{equal} says that $F\in \factor_3$ must satisfy both \rm (ii) and Theorem~\ref{0}~\rm(b).
In particular, elements of $N(v^*)$ cannot belong to any other edges of $F$.
In Section $4$ we show that the two conditions are in fact ``compatible'', namely, we can take an ordering of $V(F)$ starting with $v^{*}$, followed by an enumeration of elements of $X$, and then an enumeration of elements of $Y$ which overall satisfies~\rm(b).


\subsection{Proof Techniques}

The necessities in Theorems~\ref{k-partite} and~\ref{equal} come from probabilistic constructions using random colorings of $E(K_n)$ together with small modifications, which are inspired by similar constructions along this line of research.
In particular, similar constructions were given in recent works~\cite{arajo2020, HSW21}.

The main tools for the other direction are the \emph{hypergraph regularity method} and the \emph{lattice-based absorption method}.

The hypergraph regularity method, as an extension of the Szemer\'edi's regularity lemma for graphs, has been a celebrated tool for embedding problems in hypergraphs.
We use a popular version of the regularity lemma for $k$-graphs due to R\"odl and Schacht~\cite{regular-lemmas2007}, with an associated counting lemma~\cite{CFKO} and restriction lemma~\cite{K2010Hamilton}.
For $k=3$, we also use a recent version due to Reiher, R\"odl and Schacht~\cite{Reiher_2018}, with an associated embedding lemma~\cite{Reiher_2018}.
More precisely, we use the hypergraph regularity method to prove the $F$-cover property, namely, to find a copy of $F$ covering any given vertex in the host $k$-graph.
Further, we actually have \emph{supersaturation}, namely, we can find $\Omega(n^{f-1})$ such copies of $F$.
However, unlike standard proofs of supersaturation results in the dense setting using hypergeometric distribution, where most constant-sized subsets inherit the density condition or the minimum degree condition, it is not clear to us whether most constant-sized subsets inherit the $(p,\mu)$-denseness.
Instead, our proof of the supersaturation result (Lemma~\ref{supersaturation}) uses the regularity method, which is in the same vein as in classical proofs of graph removal lemmas.

The lattice-based absorption method was developed by the second author in~\cite{Han2017Perfect, HaKe, Han_2020}, which has found various applications in embedding spanning substructures under quite a few different contexts.
The method was developed to (efficiently) deal with a series of divisibility constructions in the perfect matching and $F$-factor problems, namely, the \emph{divisibility barriers}, which are well-structured $k$-graphs and thus far from being random.
Given the ``random-like'' nature of our context, we use the lattice-based absorption method with a relatively light amount of work -- we need to prove a list of auxiliary results (Lemmas~\ref{vertex-reachable} --~\ref{S-closed}) but they are mostly standard.

For the proof of Theorem~\ref{k-partite}, we only need to prove a ``rooted embedding'' problem of $k$-partite $k$-graphs via Theorem~\ref{k-graph}.
Our proof of Theorem~\ref{equal} then builds on the approach of that of Theorem~\ref{0} via the regularity method, with additional ideas to incorporate property~\rm(ii).
We believe our approach will be useful in solving Problem~\ref{prob} for $k\ge 4$.
In particular, this would imply a generalization of Theorem~\ref{equal}, which we believe to be quite interesting, as it probably will yield a generalization of Theorem~\ref{0} to $k$-graphs.

\subsection*{Organization}
The rest of this paper is organized as follows.
In the next section, we prove a number of auxiliary results and use them to prove Theorem~\ref{k-graph}.
We review the hypergraph regularity method in Section 3 and then prove Lemma~\ref{supersaturation} and the backward implication of Theorem~\ref{k-partite}.
Section 4 contains a proof of the backward implication of Theorem~\ref{equal}.
Finally, in Section 5, we present a construction that shows the forward implications of Theorem~\ref{equal} and Theorem~\ref{k-partite}. The last section contains some remarks and open problems.

\section{Proof of Theorem \ref{k-graph}}

In this section, we shall prove Theorem~\ref{k-graph}, namely, for any $F\in \cover_k$, we prove that every $k$-graph $H$ satisfying the condition of Problem~\ref{prob} has an $F$-factor.
We follow the absorption approach which splits the proof of Theorem~\ref{k-graph} into the following two lemmas: one is on finding an almost perfect $F$-tiling in $H$, and the other is on ``finishing up" the perfect $F$-tiling by absorption.

\begin{lemma}[Almost Perfect Tiling Lemma]\label{Almost-factor}
Given $0< p,\alpha <1$ and a $k$-graph $F$ satisfying $\pi_{\points}(F)=0$, for any $0 <\omega < 1$, there exists an $n_0$ and $\mu > 0$ such that the following holds. Let $H$ be an $(n,p,\mu)$ $k$-graph with $n\geq n_0$.
Then there exists an $F$-tiling that covers all but at most $\omega n$ vertices of $H$.
\end{lemma}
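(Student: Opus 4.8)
The plan is to build the almost-perfect $F$-tiling greedily, pulling out one vertex-disjoint copy of $F$ at a time, and to feed the hypothesis $\pi_{\points}(F)=0$ into the argument directly (no supersaturation is needed at this stage, and the parameter $\alpha$ will play no role). First I would unpack the hypothesis: applying $\pi_{\points}(F)=0$ at density $p$ supplies constants $\mu_0\in(0,1)$ and $n_1\in\mathbb{N}$ such that every $(m,p,\mu_0)$ $k$-graph with $m\ge n_1$ contains a copy of $F$. I would then choose $\mu:=\omega^{k}\mu_0$ and $n_0:=\lceil n_1/\omega\rceil$.

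The one structural fact doing the work is that, unlike a minimum-degree hypothesis, $(p,\mu)$-denseness is inherited — with only a controlled inflation of the error term — by any induced subhypergraph on a linear fraction of the vertices. Precisely, if $H$ is an $(n,p,\mu)$ $k$-graph and $U\subseteq V(H)$ with $|U|\ge\omega n$, then for all $X_1,\dots,X_k\subseteq U$ we have $e_H(X_1,\dots,X_k)\ge p|X_1|\cdots|X_k|-\mu n^{k}\ge p|X_1|\cdots|X_k|-\mu\omega^{-k}|U|^{k}$, and since $\mu\omega^{-k}=\mu_0$, the subhypergraph $H[U]$ is itself a $(|U|,p,\mu_0)$ $k$-graph. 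Now I would run the obvious loop: starting from $U:=V(H)$, while $|U|>\omega n$ note that $|U|>\omega n\ge\omega n_0\ge n_1$, so $H[U]$ contains a copy of $F$; delete the $v(F)$ vertices of one such copy from $U$ and repeat. Since $|U|$ drops by exactly $v(F)$ each round the loop terminates after at most $n/v(F)$ rounds, and it can only terminate with $|U|\le\omega n$. The copies removed along the way are pairwise vertex-disjoint, hence form an $F$-tiling leaving at most $\omega n$ vertices uncovered, as required.

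I do not expect a genuine obstacle here: all of the content is in the soft inheritance property above, and the only points needing care are the order in which the constants are fixed (first $\mu_0,n_1$ from $\pi_{\points}(F)=0$, then $\mu,n_0$ from them) and the verification that the denseness defect $\mu n^{k}$ is dominated by $\mu_0|U|^{k}$ for every $U$ with $|U|>\omega n$, which is exactly what the factor $\omega^{k}$ in the choice $\mu=\omega^{k}\mu_0$ secures. This stands in contrast to the $F$-cover ingredient of Theorem~\ref{k-graph}, where one must control every single vertex rather than merely a positive fraction of them and where, as indicated in the introduction, the hypergraph regularity method genuinely seems to be required.
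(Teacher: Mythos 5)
Your proof is correct and is essentially the paper's argument: both rest on the observation that $(p,\mu)$-denseness is inherited (with error inflated by $\omega^{-k}$) by any induced subhypergraph on at least $\omega n$ vertices, which licenses the greedy extraction of vertex-disjoint copies of $F$ until at most $\omega n$ vertices remain. The only difference is that you make the choice of constants $\mu=\omega^k\mu_0$ and $n_0=\lceil n_1/\omega\rceil$ explicit where the paper simply says ``choose $\mu$ small enough and $n$ large enough''.
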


\begin{proof}
Given $\omega\in (0,1)$, choose $\mu$ small enough and $n$ large enough.
For any induced subgraph $H'$ of $H$ on $\omega n$ vertices, by Definition~\ref{def.},
$H'$ is $(p,\mu/\omega^k)$-dense, because the error term for edge counting in~\eqref{eq:count} is $\mu n^k = (\mu/\omega^k) |V(H')|^k$.
Therefore, we can greedily select vertex-disjoint copies of $F$ from $H$ until $\omega n$ vertices are left.
\end{proof}

\begin{lemma}[Absorbing Lemma]\label{Absorbing-Lemma}
Given $0<p,\alpha<1$ and $F\in \cover_k$ with $f:=v(F)$, for any $0 <\varepsilon < 1$, there exists an $n_0$ and $\mu,\omega > 0$ such that the following holds. Let $H$ be an $(n,p,\mu,\alpha)$ $k$-graph with $n\geq n_0$. Then there exists $W \subseteq V (H)$ with $|W| \leq \varepsilon n$ such that for any vertex set $U\subseteq V(H)\setminus W$ with $|U|\leq \omega n$ and $|U|\in f\mathbb{N}$, both $H[W]$ and $H[W \cup U]$ contain $F$-factors.
\end{lemma}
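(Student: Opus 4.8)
The plan is to use the standard absorbing method of Han, building the absorbing set $W$ from many small \emph{absorbing gadgets}. For a vertex $v$, an absorbing $f$-set (or absorber) for $v$ is a set $A_v$ of $f$ vertices such that both $H[A_v]$ and $H[A_v \cup \{v\}] \setminus (A_v \setminus\text{(something)})$... more precisely, a set $A_v$ of $2f-1$ vertices such that both $H[A_v]$ and $H[A_v \cup \{v\}]$ have $F$-factors. The classical reduction then works as follows: since $F \in \cover_k$, every vertex $v$ lies in a copy of $F$, say on vertex set $C_v$; we then need one extra copy of $F$ on the remaining $f-1$ vertices of $C_v$ together with $f$ new vertices, but to gain flexibility we instead count absorbers directly. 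First I would establish a \emph{supersaturation} statement: there exist $\beta>0$ and $\mu>0$ such that in any $(n,p,\mu,\alpha)$ $k$-graph, each vertex $v$ is contained in at least $\beta n^{f-1}$ copies of $F$. This follows by a standard averaging/regularity argument from the $\cover_k$ hypothesis (indeed Lemma~\ref{supersaturation} of the paper supplies exactly this); I will quote it. From this I deduce that each $v$ has at least $\beta' n^{2f-1}$ absorbers of size $2f-1$: take a copy of $F$ through $v$ on $C_v\ni v$, delete $v$ to get an independent-ish leftover set $C_v\setminus\{v\}$ of size $f-1$, and find a second copy of $F$ whose intersection with $C_v\setminus\{v\}$ is exactly...

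Actually the cleanest route, and the one I would take, is the ``absorbers via two copies'' trick. An absorber for $v$ is a pair of copies $F_1, F_2$ of $F$ such that $V(F_1)\cap V(F_2)$ has exactly $f-1$ vertices, none equal to $v$, with $v\in V(F_1)$, and $V(F_1)\cup V(F_2)$ has size $f+1$... no — I want: $A$ of size $2f-1$ with the property that $A$ and $A\cup\{v\}$ minus... Let me just describe it as: a set $S$ of $2f$ vertices with a partition $S = S_1 \cup S_2$, $|S_1|=|S_2|=f$, such that $H[S_1], H[S_2]$ and $H[(S\setminus S_1')\cup\{v\}]$... The standard statement I will use: call $S$ with $|S|=2f-1$ an absorber for $v$ if both $H[S]$ and $H[S\cup\{v\}]$ contain $F$-factors. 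Supersaturation gives $\Omega(n^{2f-1})$ of these for each $v$ (using one copy of $F$ on $f-1$ old vertices plus $v$, and disjointly another copy on the other $f$ vertices of $S$, and for $S$ alone a copy on... this requires an additional copy of $F$ through the $f-1$ vertices — which again exists in abundance). I will verify this counting lemma carefully; it is where the $F\in\cover_k$ hypothesis, upgraded to supersaturation, is genuinely used.

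Next I would do the probabilistic selection: choose a random family $\mathcal{F}$ of vertex-$(2f-1)$-sets by including each set independently with probability $q = \varepsilon n / (2n^{2f-1})$ (up to constants). By standard first- and second-moment / concentration arguments (Markov plus Chernoff, as in Han's papers), with positive probability $\mathcal{F}$ satisfies: $|\mathcal{F}| \le \varepsilon n/(2f)$; every vertex $v$ has at least $\Omega(\varepsilon \beta' n /2)$ absorbers inside $\mathcal F$; and $\mathcal F$ has only $o(n)$ intersecting pairs. Deleting one set from each intersecting pair yields a family $\mathcal F'$ of pairwise-disjoint absorbers, still with at least $\omega n$ absorbers for each vertex after choosing $\omega$ small. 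Let $W$ be the union of $V(F)$-copies realizing the $F$-factor of $H[S]$ over all $S \in \mathcal F'$; then $|W| \le \varepsilon n$ and $H[W]$ has an $F$-factor. Given $U \subseteq V(H)\setminus W$ with $|U|\le \omega n$ and $f \mid |U|$, absorb the vertices of $U$ one at a time (or $f$ at a time) using distinct absorbers: for each $u \in U$ pick an unused absorber $S_u \subseteq W$ for $u$; replacing the $F$-factor of $H[S_u]$ by the $F$-factor of $H[S_u\cup\{u\}]$ incorporates $u$; since each vertex has $\ge \omega n \ge |U|$ absorbers available this greedy process succeeds, yielding an $F$-factor of $H[W\cup U]$.

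The main obstacle is the supersaturation/absorber-counting step: establishing that $F\in\cover_k$ forces not just one but $\Omega(n^{f-1})$ copies of $F$ through every vertex, and then assembling these into $\Omega(n^{2f-1})$ genuine absorbers. As the paper notes, one cannot cheaply pass to most constant-sized subsets because it is unclear that $(p,\mu)$-denseness is inherited; so this needs the regularity-method proof of Lemma~\ref{supersaturation} rather than an elementary hypergeometric argument. Once supersaturation is in hand, the remaining probabilistic and greedy arguments are entirely routine and follow the template of~\cite{Han2017Perfect}.
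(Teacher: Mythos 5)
Your outline misses the central difficulty that the paper's proof is built around, and one of its steps is ill-posed. First, the divisibility bug: you define an absorber for a vertex $v$ as a set $S$ with $|S| = 2f-1$ such that both $H[S]$ and $H[S\cup\{v\}]$ have $F$-factors. But $f \nmid 2f-1$ for $f\ge 2$, so $H[S]$ can never have an $F$-factor; for $F$-factor absorption one must absorb $f$-sets (or sets of size in $f\mathbb N$), not single vertices, and the absorber sizes must be recalibrated accordingly. Second, and more fundamentally, even after fixing the sizes the step ``supersaturation $\Rightarrow$ many absorbers for each $f$-set'' is a genuine gap, not a routine assembly. The standard way to manufacture an absorber for an $f$-set $T=\{t_1,\ldots,t_f\}$ is to start from an arbitrary copy of $F$ and swap the $t_i$'s in one at a time, and each swap is precisely an instance of $(F,\beta,i)$-reachability between two vertices. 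In an $(n,p,\mu,\alpha)$ $k$-graph with small $\alpha$ there is no reason for two arbitrary vertices to be reachable: the proof only establishes that among any $\lfloor 1/\alpha\rfloor + 1$ vertices two are reachable (Lemma~\ref{c+1}), which is far weaker, and the ``reachability graph'' can decompose into several classes. This is exactly why the paper uses the lattice-based absorbing machinery (Lemmas~\ref{vertex-reachable}--\ref{absorb}): it builds a partition of $V(H)$ into classes that are individually $(F,\gamma,i_0)$-closed, then uses the $(p,\mu)$-denseness to exhibit transferrals $\mathbf u_j - \mathbf u_\ell$ in the robust $F$-vector lattice, which merges the classes into one closed set so that the absorber lemma of Han--Zang--Zhao applies. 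Your plan skips this entire layer. Your identification of Lemma~\ref{supersaturation} as a crucial ingredient is correct, and the random-selection-then-greedy endgame is standard, but the absorber-existence step is where the real work is, and the paper resolves it via a different route than the one you sketch.
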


To prove Lemma ~\ref{Absorbing-Lemma}, our main tool is the lattice-based absorbing method developed recently by Han~\cite{Han2017Perfect}, which builds on the absorbing method initiated by R\"{o}dl, Ruci\'{n}ski and Szemer\'{e}di~\cite{R2015A}. We postpone the proof of Lemma ~\ref{Absorbing-Lemma} to the end of this section.

\begin{proof}[Proof of Theorem \ref{k-graph}]
Let $0<p,\alpha<1$ and $F\in \cover_k$ with $f:=v(F)$.  Let $n$ be sufficiently large and $\mu >0$ be sufficiently small. Choose $\varepsilon>0$ small enough. First, select $\omega>0$ by Lemma~\ref{Absorbing-Lemma} and then $\mu_1>0$ by Lemma~\ref{Almost-factor}.
Let $H$ be an $(n,p,\mu,\alpha)$ $k$-graph with $n\geq n_0$ and $f\mid n$.
By Lemma~\ref{Absorbing-Lemma}, we have a $W \subseteq V (H)$ with $|W| \leq \varepsilon n$ such that for any vertex set $U\subseteq V(H)\setminus W$ with $|U|\leq \omega n$ and $|U|\in f\mathbb{Z}$, both $H[W]$ and $H[W \cup U]$ contain $F$-factors.
Let $H':=H[V(H)\setminus W]$ and note that $H'$ is an $(n-|W|,p,\mu_1)$ $k$-graph.
By Lemma \ref{Almost-factor}, there is an $F$-tiling that leaves a set $C \subseteq V(H')$ of at most $\omega n$ vertices uncovered.
Since $n, |W|\in f\mathbb{N}$, it holds that $|C|\in f\mathbb{N}$.
By the absorbing property of $W$, there exists an $F$-factor on $H[W\cup C]$. Thus we get an $F$-factor of $H$.
\end{proof}

Next, we state and prove several lemmas that are useful for the proof of Lemma \ref{Absorbing-Lemma}.
The key property is the $F$-cover property and we first strengthen it as follows.

\begin{lemma}\label{supersaturation}
Given $F\in \cover_k$ with $f:=v(F)$, and $0<p,\alpha<1$, there exist $\mu,\eta>0$ and an $n_0\in \mathbb{N}$ such that the following holds.
If $H$ is an $(n,p,\mu,\alpha)$ $k$-graph with $n\geq n_0$, then for any vertex $w$ in $V(H)$, $w$ is contained in at least $\eta n^{f-1}$ copies of $F$.
\end{lemma}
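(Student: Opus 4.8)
The plan is to deduce the supersaturation statement from the bare $F$-cover property (the hypothesis $F\in\cover_k$) by a standard ``blow-up + regularity'' argument, in the same spirit as the proof that a single copy of a subgraph forces polynomially many copies in the dense setting. Fix $0<p,\alpha<1$ and set $\eta>0$ tiny, to be chosen last. First I would take $\mu_0$ and $n_1$ witnessing $F\in\cover_k$ for the parameters $p,\alpha/2$; these will guarantee that any sufficiently large $(n,p,\mu_0,\alpha/2)$ $k$-graph has an $F$-cover. The key point is that being $(p,\mu)$-dense is a ``scale-free'' condition: if $H$ is an $(n,p,\mu)$ $k$-graph and $X_1,\dots,X_k$ are pairwise disjoint linear-sized vertex classes, then the induced $k$-partite subgraph inherits $(p,\mu')$-denseness with $\mu'$ only a constant factor larger (cf.\ the proof of Lemma~\ref{Almost-factor}); the minimum-degree condition, however, is \emph{not} inherited by small subsets, which is precisely the obstruction mentioned in the paper.

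The workaround I would use is regularity. Apply the (weak) regularity lemma for $k$-graphs of R\"odl and Schacht to $H$ to obtain a regular partition; the $(p,\mu)$-denseness of $H$ forces the ``reduced'' weighted structure to be essentially complete with all densities $\ge p-o(1)$, so after cleaning up irregular and low-density blocks one is left with a dense regular ``skeleton'' on almost all vertices. Concretely, I would argue that for all but $o(n)$ vertices $w$, there is a collection of $k-1$ further vertex clusters $V_2,\dots,V_k$, each of linear size, such that $w$ together with these clusters sits inside a regular and dense configuration. For the $o(n)$ exceptional vertices $w$ one still has $\deg(w)\ge\alpha n^{k-1}$, so a pigeonhole argument inside the link $(k-1)$-graph $N_H(w)$, combined with regularity of the link, again places $w$ in a dense regular $(k-1)$-partite neighbourhood configuration of linear size.

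With $w$ embedded in a dense regular configuration, I would pass to a sub-configuration on $k$ disjoint linear-sized clusters that, by the scale-freeness observation above, is itself $(p,\mu')$-dense with a min-degree condition $\ge \alpha'n^{k-1}$ for some $\alpha'=\alpha'(p,\alpha)>0$ — i.e., an $(m,p,\mu',\alpha')$ $k$-graph with $m=\Theta(n)$ — to which the $F$-cover hypothesis applies, producing \emph{a} copy of $F$ through any prescribed vertex. To turn one copy into $\Omega(n^{f-1})$ copies through $w$, I would not iterate the cover property but instead invoke the counting lemma associated to the regularity decomposition (Lemma / \cite{CFKO}): having fixed $w$, the copy of $F$ uses $w$ and certain clusters, and regular-counting then yields $\Omega(m^{f-1})=\Omega(n^{f-1})$ extensions of the partial embedding that fixes only $w$. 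Taking $\eta$ small enough relative to the implicit constants, and $\mu$ small enough relative to the regularity parameters, finishes the proof. The main obstacle, as flagged in the introduction, is exactly the non-inheritance of the minimum-degree condition by small subsets; the resolution is to never pass to a \emph{small} subset — we only ever restrict to linear-sized regular clusters, where both $(p,\mu)$-denseness and a (weaker) min-degree condition survive, and to handle the few genuinely low-``cluster-degree'' vertices separately via their global degree $\alpha n^{k-1}$ and regularity of their link.
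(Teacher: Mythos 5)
Your proposal correctly identifies the obstruction stated in the paper (that $(p,\mu)$-denseness is inherited by linear-sized subsets but the minimum-degree condition is not), and the high-level strategy of regularity, cleaning, one application of the $\cover_k$ property, and counting is the right skeleton. But there is a second, separate obstruction which you do not resolve, and it is the one the paper's proof is built around. After applying the regularity lemma and deleting the edges that are not $d$-useful (as you yourself propose, since these are what the counting lemma cannot see), the prescribed vertex $w$ may lose \emph{all} of its edges. Lemma~\ref{useful-edge} only bounds the total number of non-useful edges by $2dn^k$; a single vertex has at most $O(n^{k-1})$ incident edges, so up to $\Theta(dn)$ vertices can have every one of their incident edges non-useful. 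For such a vertex, the $\cover_k$ property applied to the cleaned-up subgraph produces nothing through $w$, and your fallback for exceptional vertices --- a ``pigeonhole inside the link $N_H(w)$ combined with regularity of the link'' --- does not get off the ground: the link of a vertex inherits no regularity structure from the partition of $H$, and the pigeonhole tells you only that many of $w$'s edges land in some cell of $\mathcal{P}^{(k-1)}$, not that they land in a polyad with respect to which $H$ is regular and dense. Relatedly, your assertion that $(p,\mu)$-denseness forces the reduced structure to be ``essentially complete with all densities $\ge p-o(1)$'' is too strong in the hypergraph setting; the paper never claims it and instead works only with the weaker fact that few edges are non-useful.

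The paper circumvents exactly this issue with the duplication trick that is absent from your argument. One forms the auxiliary $k$-graph $H_w$ by adding $n$ clones of $w$ (the set $V_w$) and runs the regularity lemma on $H_w$ with \emph{initial partition} $V\cup V_w$, so that $V_w$ is refined into its own clusters. Cleaning removes $O(dn^k)$ edges in total, so by averaging there is a cluster $W\subseteq V_w$ from which only $O(\mu n^k)$ incident edges were removed; since all of $W$ consists of clones of $w$, some clone $w^*\in W$ retains degree $\ge(\alpha/2)n^{k-1}$ in the cleaned graph. After also discarding the few low-degree vertices one has an $((1-\alpha')n,p,\sqrt\mu,\alpha')$ $k$-graph still containing $w^*$, to which $\cover_k$ applies; the resulting copy $F^*$ through $w^*$ consists entirely of useful edges, the counting lemma gives $\Omega(n^f)$ copies of $F$ in the corresponding regular complex, and averaging over $W$ produces a clone $w'$ in $\Omega(n^{f-1})$ copies avoiding $w$, each of which can be relabelled with $w$ in place of $w'$. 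Without the cluster $W$ of interchangeable clones, neither the survival of a proxy for $w$ nor the final averaging/relabelling step is available, and this is where your argument has a genuine gap.
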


In the usual setting (e.g., problems with minimum degree-type conditions), Lemma~\ref{supersaturation} might be proved using the \emph{supersaturation trick} via \emph{hypergeometric distribution}.
To be more precise, most constant-sized sets in the host graph actually inherit the degree assumptions, and thus we can find an embedding within each such set and then conclude the result via counting.
However, it is not clear for us whether constant-sized induced subgraphs of $(p,\mu)$-dense $k$-graphs are still quasi-random.
In contrast, we use the regularity method to prove Lemma~\ref{supersaturation} and therefore postpone its proof to Section 3.
To prove Lemma \ref{Absorbing-Lemma}, we  use the notion of reachability introduced by Lo and Markstr\"{o}m~\cite{Lo2015}.
Given a $k$-graph $F$ of order $f$, $\beta> 0$ and $i \in \mathbb{N}$, two vertices $u, v$ in a $k$-graph $H$ on $n$ vertices are \textit{$(F, \beta, i)$-reachable} (in $H$) if there are at least $\beta n^{if-1}$ $(if-1)$-sets $W$ such that both $H[\{u\}\cup W]$ and $H[\{v\}\cup W]$ contain $F$-factors. In this case, we call $W$ a \emph{reachable set} for $u$ and $v$. A vertex set $A$ is \emph{$(F, \beta, i)$-closed} in $H$ if every two vertices in $A$ are $(F, \beta, i)$-reachable in $H$. For $x \in V (H)$, let $\tilde{N}_{F, \beta, i}(x)$ be the set of vertices that are $(F, \beta, i)$-reachable to $x$.

The following lemmas explore the reachability properties of $(p,\mu)$-dense $k$-graphs.
We first show that two vertices are reachable if they have large common neighborhood.

\begin{lemma}\label{vertex-reachable}
Given $0<p,\alpha<1$, $\alpha' \ll \alpha $ and $F\in \cover_k$ with $f:=v(F)$, there exists an $n_0\in \mathbb{N}$ and $0<\mu,\beta <1$ such that the following holds.
Let $H$ be an $(n,p,\mu,\alpha)$ $k$-graph with $n\geq n_0$. If two vertices $u,v$ satisfy $|N(u)\cap N(v)|\geq \alpha' n^{k-1}$, then $u,v$ are $(F, \beta , 1)$-reachable in $H$.
\end{lemma}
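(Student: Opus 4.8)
The plan is to quote the supersaturation statement (Lemma~\ref{supersaturation}) as a black box, after a small surgery on the link of $u$ that makes the vertex swap ``$u\leftrightarrow v$'' automatic. Write $G:=N_H(u)\cap N_H(v)$ for the family of $(k-1)$-sets $S$ with $S\cup\{u\}\in E(H)$ and $S\cup\{v\}\in E(H)$; each such $S$ avoids $u$ and $v$, and $|G|\ge\alpha' n^{k-1}$ by hypothesis. Let $H'$ be the $k$-graph obtained from $H$ by deleting every edge $e\ni u$ with $e\setminus\{u\}\notin G$, so that $N_{H'}(u)=G$, no edge avoiding $u$ is touched, and for every $w\ne u$ the degree of $w$ drops by at most $\binom{n-2}{k-2}=O(n^{k-2})$. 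Since fewer than $n^{k-1}$ edges are deleted in total, for $n$ large $H'$ is $(p,2\mu)$-dense and $\delta_1(H')\ge\tfrac{\alpha'}{2}n^{k-1}$, where we use $\alpha'\ll\alpha$. Thus, choosing $\mu$ below half the constant that Lemma~\ref{supersaturation} supplies for the pair $(p,\tfrac{\alpha'}{2})$, and $n_0$ large enough, $H'$ is an admissible input for Lemma~\ref{supersaturation}.

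First I would apply Lemma~\ref{supersaturation} to $H'$ at the vertex $u$, obtaining a constant $\eta'=\eta'(p,\alpha')>0$ together with at least $\eta' n^{f-1}$ copies of $F$ in $H'$ that contain $u$. At most $O(n^{f-2})=o(n^{f-1})$ of these also contain $v$, so at least $\tfrac{\eta'}{2}n^{f-1}$ copies of $F$ in $H'$ contain $u$ but not $v$; call this family $\mathcal{F}$. For $\Phi\in\mathcal{F}$ set $W_\Phi:=V(\Phi)\setminus\{u\}$, an $(f-1)$-set disjoint from $\{u,v\}$. Since a copy of $F$ on a fixed $f$-element set is determined by its edge set, each $(f-1)$-set equals $W_\Phi$ for at most a bounded number $c_F$ of members of $\mathcal{F}$; hence the number of distinct sets $W_\Phi$ is at least $\beta n^{f-1}$, where $\beta:=\eta'/(2c_F)$.

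Next I would check that each such $W:=W_\Phi$ is a reachable set for $u$ and $v$. Since $i=1$ gives $if-1=f-1$, this amounts to exhibiting a spanning copy of $F$ in each of $H[\{u\}\cup W]$ and $H[\{v\}\cup W]$. The first is $\Phi$ itself (and $H'\subseteq H$). For the second, let $x^*\in V(F)$ be the vertex sent to $u$ by $\Phi$, and let $L\subseteq G\cap[W]^{k-1}$ be the link of $u$ inside $\Phi$. The edges of $\Phi$ avoiding $u$ already lie in $H[W]$, while the edges of $\Phi$ through $u$, namely $\{S\cup\{u\}:S\in L\}$, become $\{S\cup\{v\}:S\in L\}\subseteq E(H)$ after replacing $u$ by $v$, because $L\subseteq G\subseteq N_H(v)$. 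Since $v\notin W$, this is a genuine copy of $F$ on $f$ vertices, so $u$ and $v$ are $(F,\beta,1)$-reachable in $H$.

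I do not foresee a genuine obstacle: the real content is Lemma~\ref{supersaturation}, and the remainder is bookkeeping. The two points that need care are (i) that modifying a single link preserves $(p,\mu)$-denseness and, thanks to $\alpha'\ll\alpha$, the minimum-degree hypothesis, so that $H'$ remains an admissible host; and (ii) that the counted copies of $F$ avoid $v$, so that $\{v\}\cup W$ really has $f$ vertices. The one conceptual move is to pass from $H$ to the auxiliary graph $H'$ before invoking supersaturation — this is exactly what makes the vertex swap succeed without any additional embedding argument.
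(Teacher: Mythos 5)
Your proof is correct and follows essentially the same strategy as the paper: modify $H$ near $u$ so that $N_{H'}(u)\subseteq N_H(u)\cap N_H(v)$, verify the modified graph is still an admissible $(n,p,2\mu,\alpha')$-type host, invoke Lemma~\ref{supersaturation} at $u$, and then swap $u$ for $v$ in the resulting copies of $F$. The one cosmetic difference is that the paper deletes $v$ from $H$ outright (so every copy of $F$ through $u$ in $H'$ automatically avoids $v$), while you keep $v$ in $H'$ and discard the $O(n^{f-2})$ copies through $v$ afterward; both routes work and neither is materially simpler. Your accounting of the multiplicity constant $c_F$ when passing from copies of $F$ to $(f-1)$-sets $W$ is a point the paper glosses over, so that extra care is welcome rather than redundant.
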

\begin{proof}
Construct an auxiliary hypergraph $H'$ from $H$ by deleting vertex $v$ and an edge set $E'=\{ S'\cup \{u\}\in E(H): S'\in N_H(u)\setminus N_H(v)\}$.
We claim that $H'$ is an $(n-1,p,2\mu,\alpha')$ $k$-graph.
Indeed, since $H$ is $(p,\mu)$-dense and $n\geq n_0$,
the edge counting~\eqref{eq:count} has error term $\mu n^k + 2 n^{k-1} \le 2\mu (n-1)^k$, namely, $H'$ is $(p,2\mu)$-dense.
%
For the minimum degree of $H'$, we have $\deg_{H'}(u)=|N_{H}(v)\cap N_{H}(u)|\geq \alpha' n^{k-1}$; for other vertices $w\in V(H')\setminus \{ v\} $, as we delete $v$ from $H$, we have $\deg_{H'}(w)\ge \alpha {n}^{k-1}-\binom{n}{k-2}\ge \alpha' n^{k-1}$.
Thus, $H'$ is an $(n-1,p,2\mu,\alpha')$ $k$-graph.

Since $\mu$ is chosen to be small and $n_0$ is chosen to be large, by Lemma~\ref{supersaturation} there exists $\beta$ such that $u$ is contained in at least $\beta n^{f-1}$ copies of $F$ in $H'$.
By the definition of $H'$, in those copies of $F$ we may replace $u$ by $v$, namely, there are $\beta n^{f-1}$ $(f-1)$-sets $W$ such that both $H[\{u\}\cup W]$ and $H[\{v\}\cup W]$ contain a copy of $F$. Therefore, $u,v$ are $(F, \beta, 1)$-reachable in $H$.
\end{proof}

Using the previous result, we show that among a constant number of vertices, there are two of them that are reachable.

\begin{lemma}\label{c+1}
Given $0<p,\alpha<1$ and $F\in \cover_k$ with $f:=v(F)$,
there exists an $n_0\in \mathbb{N}$ and $0<\mu,\beta <1$ such that the following holds.
Let $H$ be an $(n,p,\mu,\alpha)$ $k$-graph with $n\ge n_0$.
Then every set of $\lfloor 1/\alpha \rfloor+1$ vertices in $V(H)$ contains two vertices that are $(F, \beta , 1)$-reachable in $H$.
\end{lemma}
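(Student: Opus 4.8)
The plan is to use a simple averaging argument on neighborhoods together with Lemma~\ref{vertex-reachable}. Fix a set $S$ of $\lfloor 1/\alpha\rfloor+1$ vertices in $V(H)$. Each vertex $v\in S$ has $\deg_H(v)\ge \alpha n^{k-1}$, so its link $N_H(v)\subseteq [V(H)]^{k-1}$ has size at least $\alpha n^{k-1}$; think of these as $\ge \alpha n^{k-1}$ ``slots'' among the at most $\binom{n}{k-1}\le n^{k-1}$ possible $(k-1)$-sets. Since $|S|=\lfloor 1/\alpha\rfloor+1 > 1/\alpha$, the total count $\sum_{v\in S}|N_H(v)| \ge |S|\,\alpha n^{k-1} > n^{k-1}\ge \binom{n}{k-1}$ exceeds the number of available $(k-1)$-sets. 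By pigeonhole there is a $(k-1)$-set $T$ lying in $N_H(u)\cap N_H(v)$ for some pair $u\ne v$ in $S$.

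This only gives one common neighbor, not the $\alpha' n^{k-1}$ required by Lemma~\ref{vertex-reachable}, so the averaging has to be done more quantitatively. Instead I would double count pairs $(v,T)$ with $v\in S$, $T\in N_H(v)$: the number of such pairs is at least $|S|\alpha n^{k-1}$. For each $(k-1)$-set $T$, let $d(T)$ be the number of $v\in S$ with $T\in N_H(v)$; then $\sum_T d(T)\ge |S|\alpha n^{k-1}$, while $\sum_T \binom{d(T)}{2}$ counts (with multiplicity) pairs $\{u,v\}\subseteq S$ sharing $T$ as a common neighbor. By convexity (Jensen), since the $d(T)$'s are supported on at most $n^{k-1}$ sets and sum to at least $|S|\alpha n^{k-1}\ge (1+\text{something})\,n^{k-1}$ — more precisely, writing $c:=\lfloor 1/\alpha\rfloor+1$ so that $c\alpha>1$ — the average value of $d(T)$ over the at most $n^{k-1}$ sets is at least $c\alpha>1$, hence $\sum_T\binom{d(T)}{2}\ge n^{k-1}\binom{c\alpha}{2}$ is of order $\Omega(n^{k-1})$ (here the hidden constant depends only on $\alpha$). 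Since there are only $\binom{c}{2}$ pairs in $S$, some pair $\{u,v\}$ is charged $\Omega_\alpha(n^{k-1})$ times, i.e.\ $|N_H(u)\cap N_H(v)|\ge \alpha' n^{k-1}$ for a suitable $\alpha'=\alpha'(\alpha)\ll\alpha$.

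With such a pair in hand, apply Lemma~\ref{vertex-reachable} with this $\alpha'$ to conclude that $u$ and $v$ are $(F,\beta,1)$-reachable in $H$, which produces the promised $\mu$ and $\beta$ (choosing $\mu$ small enough and $n_0$ large enough to accommodate Lemma~\ref{vertex-reachable}). I do not expect any serious obstacle here: the only mild subtlety is getting the convexity estimate to yield a genuinely linear-order (in $n^{k-1}$) common neighborhood rather than just a single common neighbor, which is exactly what forces the choice of the threshold $\lfloor 1/\alpha\rfloor+1$ (so that $c\alpha$ is bounded away from $1$) and makes $\alpha'$ depend only on $\alpha$. Everything else is bookkeeping.
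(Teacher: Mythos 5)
Your proof is correct and follows essentially the same approach as the paper: both argue that the degree sum over the $\lfloor 1/\alpha\rfloor+1$ vertices exceeds the number of available $(k-1)$-sets, force a pair $u,v$ with $|N(u)\cap N(v)|\ge \alpha' n^{k-1}$ by averaging, and then invoke Lemma~\ref{vertex-reachable}. The only cosmetic difference is that the paper gets the lower bound on $\sum_{\{u,v\}}|N(u)\cap N(v)|$ via the Bonferroni inequality (implicitly, $|\bigcup N(v)|\ge\sum|N(v)|-\sum|N(u)\cap N(v)|$), while you use the double count $\sum_T\binom{d(T)}{2}$ plus Jensen; both give the required $\Omega_\alpha(n^{k-1})$ bound, and the threshold $\lfloor 1/\alpha\rfloor+1$ plays the same role in each.
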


\begin{proof}
Let $c := \lfloor 1/\alpha \rfloor$ and choose $\alpha'$ such that $(c+1)\alpha > 1+(c+1)^2\alpha'$.
Let $\beta ,\mu >0$ be given by Lemma~\ref{vertex-reachable}.
Since $H$ is an $(n,p,\mu,\alpha)$ $k$-graph, we have $\delta_1 (H)\ge \alpha {n}^{k-1}$.
Given any $c+1$ vertices, since their degree sum is at least
\[
(c+1)\alpha {n}^{k-1} > (1+ (c+1)^{2} \alpha'){n}^{k-1},
\]
there exist two vertices $u,v$ such that $|N(u)\cap N(v)|\geq \alpha' n^{k-1}$.
By Lemma~\ref{vertex-reachable}, they are $(F, \beta , 1)$-reachable.
\end{proof}

Note that our host $k$-graph may contain vertices with small ``reachable neighborhoods'' (if we view the reachability information as a graph).
The following lemma says that given the assumption that every constant-sized set of vertices contains two reachable vertices, then we can find a very large set $S$ in which all vertices have large reachable neighborhoods.

\begin{lemma}\label{S-closed}
Let $\delta ,\beta >0$ and integers $c,k,f\geq 2$ be given. Let $F$ be a $k$-graph on $f$ vertices. Assume that $H$ is a $k$-graph on $n$ vertices satisfying that every set of $c+1$ vertices contains two vertices that are $(F,\beta ,1)$-reachable in $H$. Then there exists $S\subseteq V(H)$ with $|S|\geq (1-c\delta )n$ such that $|\tilde{N}_{F, \beta , 1}(v)\cap S|\geq \delta n$ for any $v\in S$.
\end{lemma}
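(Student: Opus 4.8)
The plan is to record the reachability information of $H$ in an auxiliary graph $G$, forget about the hypergraph, and then run the standard iterated ``cleaning'' step familiar from the absorption method, being slightly careful in the final count so as to recover the sharp constant $c\delta$.

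First I would form the \emph{reachability graph} $G$ on vertex set $V(H)$ by joining $u$ to $v$ exactly when $u$ and $v$ are $(F,\beta,1)$-reachable in $H$. Since reachability is a symmetric relation, $G$ is an ordinary simple graph, $N_G(v)=\tilde N_{F,\beta,1}(v)$, and the hypothesis---that every $(c+1)$-set of vertices contains two $(F,\beta,1)$-reachable vertices---says precisely that $G$ contains no independent set of size $c+1$, i.e.\ $\alpha(G)\le c$. What we want is a set $S$ with $|S|\ge(1-c\delta)n$ in which every vertex has at least $\delta n$ neighbours in $S$.

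Next I would produce $S$ by repeated deletion: start from $S_0=V(H)$ and, while some $v\in S_i$ has fewer than $\delta n$ neighbours in $S_i$ (equivalently $|\tilde N_{F,\beta,1}(v)\cap S_i|<\delta n$), delete it; let $S$ be the terminal set and $R=\{r_1,\dots,r_t\}$ the deleted vertices listed in order of deletion. By construction $S$ has the required degree property, so everything reduces to bounding $|R|$. The point is that the deletion order exhibits $G[R]$ as $(\lceil\delta n\rceil-1)$-degenerate: at the moment $r_i$ is deleted, every later $r_j$ $(j>i)$ still lies in the current set, so $r_i$ has fewer than $\delta n$ neighbours among $r_{i+1},\dots,r_t$. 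Colouring the vertices of $R$ greedily in the reverse order $r_t,r_{t-1},\dots,r_1$ then uses at most $\lceil\delta n\rceil$ colours, so $G[R]$ has an independent set of size at least $|R|/\lceil\delta n\rceil$; since $\alpha(G[R])\le\alpha(G)\le c$ this forces $|R|\le c\lceil\delta n\rceil$, hence $|S|\ge(1-c\delta)n$ (exactly when $\delta n\in\mathbb{Z}$, and otherwise after the routine adjustment of $\delta$ by a constant).

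The whole argument lives on the graph $G$; the hypergraph data $k$, $f$, $F$ enter only through the definition of reachability, so I do not expect a genuine obstacle. The only mild subtlety is the last step: a direct ``edge count plus Tur\'an'' estimate only gives a bound on $|R|$ of the form $O(c\delta n)$ with a worse constant, which is why I would instead invoke degeneracy and greedy colouring to pin down the factor $c$ exactly.
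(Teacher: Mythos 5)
Your proof is correct, and the underlying idea is the same as the paper's -- an iterated cleaning step whose total damage is bounded via the independence number $\alpha(G)\le c$ of the reachability graph -- but the bookkeeping is organized differently. The paper removes, at each step, the bad vertex together with its entire reachable neighborhood within the current set; this forces the sequence of anchor vertices $v_0,v_1,\dots$ to be pairwise non-reachable (an independent set in your $G$), hence of size at most $c$, and since each step deletes at most $\delta n+1$ vertices the bound $c(\delta n+1)$ falls out by a direct count with no coloring argument. You instead delete one vertex at a time, note that the deletion order exhibits $G[R]$ as $(\lceil\delta n\rceil-1)$-degenerate, and then apply $|R|\le\chi(G[R])\cdot\alpha(G[R])\le c\lceil\delta n\rceil$. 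Both routes incur the same harmless off-by-one slop (absorbed by perturbing $\delta$). The trade-off is that the paper's batch deletion is slightly more economical -- no detour through chromatic number -- while your one-at-a-time deletion is the more familiar process and the degeneracy observation is a clean way to control it; either would be acceptable in the paper.
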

\begin{proof}
Let $H$ be a $k$-graph on $n$ vertices satisfying the condition of Lemma~\ref{S-closed}. We greedily identify vertices with few ``reachable neighbors'' and remove the vertex together with the vertices reachable to it from $H$.
Set $V_0:=V(H)$. If there is a vertex $v_0\in V_0$ such that $|\tilde{N}_{F, \beta , 1}(v_0)\cap V_0|<\delta n$, then let $A_0:=\{v_0\}\cup \tilde{N}_{F, \beta , 1}(v_0)$ and let $V_1:=V_0\setminus A_0$.
Next, we check $V_1$ -- if there still exists a vertex $v_1\in V_1$ such that $|\tilde{N}_{F, \beta , 1}(v_1)\cap V_1|<\delta n$, then let $A_1:=\{v_1\}\cup \tilde{N}_{F, \beta , 1}(v_1)$ and let $V_2:=V_1\setminus A_1$ and repeat the procedure until no such $v_j$ exists.
Suppose we stop and obtain a set of vertices $v_0,\dots, v_s$.
Observe that every two vertices of $v_0,\dots, v_s$ are not $(F, \beta , 1)$-reachable in $H$, which implies $s< c$ and $|\bigcup _{0\leq i\leq s}A_i|\leq c\delta n$.
Set $S=V(H)\setminus \bigcup _{0\leq i\leq s}A_i $ and thus $|\tilde{N}_{F, \beta , 1}(v)\cap S|\geq \delta n$ for any $v\in S$.
\end{proof}

The following lemma was proved in~\cite {Han_2020}, which can be used to find a partition of the subset $S$ in Lemma~\ref{S-closed} that possesses a rich property.

\begin{lemma}{\rm{(}\cite[Theorem 6.3]{Han_2020}\rm{)}}.
\label{partation}
Given $\delta>0$, integers $c,k,f\geq2$ and $0 <\beta \ll 1/c, \delta, 1/f$, there exists a constant $\gamma >0$ such that the following holds for all sufficiently large $n$. Let $F$ be a $k$-graph on $f$ vertices. Assume $H$ is a $k$-graph on $n$ vertices and $S\subseteq V(H)$ is such that $|\tilde{N}_{F, \beta , 1}(v)\cap S|\geq \delta n$ for any $v\in S$. Further, suppose every set of $c+1$ vertices in $S$ contains two vertices that are $(F, \beta , 1)$-reachable in $H$. Then we can find a partition $\mathcal{P}$ of $S$ into $V_1,\dots,V_r$ with $r \leq \min\{c,\lfloor1/{\delta}\rfloor\}$ such that for any $i \in [r]$, $|V_i|\geq (\delta -\beta )n$ and $V_i$ is $(F, \gamma , 2^{c-1})$-closed in $H$. \qed
\end{lemma}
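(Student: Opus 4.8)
The plan is to pass to the natural ``reachability graph'' on $S$ and run an iterative merging argument whose engine is a composition (transitivity) lemma for reachability. For $t\ge 1$ and $\rho>0$, let $\mathcal R_t(\rho)$ be the graph on $S$ whose edges are the pairs that are $(F,\rho,2^{t-1})$-reachable in $H$. The hypotheses say precisely that $\mathcal R_1(\beta)$ has minimum degree at least $\delta n$ and that $S$ contains no independent set of $c+1$ vertices in $\mathcal R_1(\beta)$; what must be produced is a partition $S=V_1\cup\cdots\cup V_r$ with $r\le\min\{c,\lfloor 1/\delta\rfloor\}$, each $|V_i|\ge(\delta-\beta)n$, and each $V_i$ a clique of $\mathcal R_t(\gamma)$ for some $t$ with $2^{t-1}\le 2^{c-1}$. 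The bound $r\le\lfloor 1/\delta\rfloor$ is then automatic from disjointness once the parts have size more than $(\delta-\beta)n$, so the substantive point is $r\le c$, which the procedure below forces through the independence hypothesis. Note also that the reachability hypothesis alone forces $H$ to have $\Omega(n^f)$ copies of $F$ (for each $v\in S$ and each $v'$ reachable to it there are $\ge\beta n^{f-1}$ reachable $(f-1)$-sets $W$, and $H[\{v\}\cup W]$ is a copy of $F$), which gives the elementary \emph{padding} fact that $(F,\rho,i)$-reachable implies $(F,\rho',i')$-reachable for every $i'\ge i$ (glue on the vertex set of a disjoint copy of $F$); hence it suffices to reach \emph{some} level at most $2^{c-1}$.

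\emph{The composition lemma.} The key statement is: if $u,w\in S$ and at least $\theta n$ vertices $v$ are simultaneously $(F,\rho,j)$-reachable to $u$ and to $w$, then $u$ and $w$ are $(F,\rho',2j)$-reachable, with $\rho'=\rho'(\rho,\theta,j,f)>0$. To prove it, fix such a $v$: since each of the reachable-set families for $\{u,v\}$ and for $\{v,w\}$ has at least $\rho n^{jf-1}$ members while only $O(n^{jf-2})$ of them fail to avoid the bounded set we need to keep clear of, we may pick a reachable $(jf-1)$-set $W_u$ for $\{u,v\}$ and a reachable $(jf-1)$-set $W_w$ for $\{v,w\}$ with $W_u,W_w,\{u,w\}$ pairwise disjoint, and set $W:=W_u\cup\{v\}\cup W_w$, of size $2jf-1$. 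Then $H[\{u\}\cup W]$ has an $F$-factor (the $F$-factor of $H[\{u\}\cup W_u]$ together with that of $H[\{v\}\cup W_w]$, which lie on complementary vertex sets), and likewise $H[\{w\}\cup W]$; so $W$ is a reachable $(2jf-1)$-set for $\{u,w\}$. Ranging over the $\ge\theta n$ choices of $v$ and, for each, the $\ge\tfrac14\rho^2 n^{2jf-2}$ ordered pairs $(W_u,W_w)$, and dividing by the $O(1)$ ways a fixed $W$ can arise in this construction, yields $\ge\rho' n^{2jf-1}$ valid sets $W$. The one delicate point is the exponent: the naive count falls one factor of $n$ short of the target $n^{2jf-1}$, and that factor is recovered exactly from the $\theta n$ choices of the connecting vertex $v$ — a single connecting vertex is useless, which is why every reachability claim in the argument is phrased with \emph{linearly many} common reachable neighbours, not just one.

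\emph{Extracting the partition.} Build the partition by an iterative merging procedure, maintaining a partition of $S$ into parts each of size at least $\delta_t n$, each a clique of $\mathcal R_t(\gamma_t)$ for current positive constants $\delta_t,\gamma_t$. Start from a partition into at most $c$ parts: each component of $\mathcal R_1(\beta)$ has $\ge\delta n$ vertices, a transversal of the components is independent, and using the minimum-degree condition one closes each component to a clique at a bounded level via repeated applications of the composition lemma. At each stage, form the \emph{link graph} on the current parts, joining $V_a,V_b$ when some linearly large set of vertices is reachable, at the current level, to a linear fraction of $V_a$ and to a linear fraction of $V_b$; if the link graph has an edge, contract each of its connected components, applying the composition lemma twice per link — first from a vertex of a part to a link vertex (the part, being a clique, overlaps that link vertex's reachable set linearly), then between a vertex of $V_a$ and a vertex of $V_b$ through the linearly many common link vertices — to see that each merged part is again a clique of $\mathcal R_{t'}(\gamma_{t'})$ for a controlled larger $t'$ and a controlled smaller $\gamma_{t'}$; if the link graph is edgeless, one checks that the current parts already exhibit an $\mathcal R_1(\beta)$-independent transversal, so their number is at most $c$ and the process stops. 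Each non-trivial merge strictly decreases the number of parts, so the process halts after at most $c-1$ rounds; organising the merges so that each link-component is closed along a breadth-first tree of depth at most $c-1$ keeps the final level at $2^{c-1}$, and the size bound $|V_i|\ge(\delta-\beta)n$ follows since each final part contains a set $\tilde N_{F,\beta,1}(v)\cap S$ of size $\ge\delta n$ up to at most $\beta n$ unplaced vertices, which are attached arbitrarily. The careful accounting — relating ``no link edge'' to $\mathcal R_1(\beta)$-independence and tracking the triple $(\gamma_t,\delta_t,2^{t-1})$ through the $\le c-1$ rounds — is the technical content of~\cite{Han_2020}.

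\emph{Main obstacle.} The genuinely delicate points are the exponent bookkeeping in the composition lemma, where the missing factor of $n$ must be supplied by a linear supply of connecting vertices, and the control of $(\gamma_t,\delta_t,2^{t-1})$ through the boundedly many rounds so that $\gamma_t$ stays bounded away from $0$ while $2^{t-1}$ never exceeds $2^{c-1}$; the independence hypothesis is used only to guarantee that the merging terminates with at most $c$ parts.
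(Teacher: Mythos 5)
The paper does not prove this lemma itself; it states it as a quoted result from~\cite{Han_2020} and closes it with $\square$. So there is no in-paper argument to compare against, and the question is only whether your blind reconstruction is sound.

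Your composition lemma is correct and correctly proved. The disjointness discards are $O(n^{jf-2})$-many, the $O(1)$ overcounting of a fixed $W$ (a choice of $v\in W$ and of a balanced bipartition of $W\setminus\{v\}$) is right, and the observation that the linear supply of connecting vertices $v$ is precisely what recovers the one missing factor of $n$ is exactly the point that makes the lemma true and is worth stressing. The padding remark (level $i$ implies level $i'\ge i$ by gluing on disjoint copies of $F$) is also fine once one notes, as you do, that the reachability hypothesis alone gives $\Omega(n^f)$ copies of $F$.

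The ``extracting the partition'' step, by contrast, contains the real content and leaves its three load-bearing claims unargued. First, the assertion that the minimum-degree condition ``closes each component of $\mathcal R_1(\beta)$ to a clique at a bounded level via repeated applications of the composition lemma'' does not follow as stated: two vertices at $\mathcal R_1(\beta)$-distance $2$ need not have $\Omega(n)$ common neighbours, so the composition engine does not engage by itself; one must first exploit $\alpha(\mathcal R_1(\beta))\le c$ to bound the diameter (shortest paths produce independent sets of about half their length), and then argue that after each doubling of the level the set of vertices reachable from a fixed $u$ at the new level grows by at least $\delta n$-ish while still overlapping the reach of $w$ in $\Omega(n)$ vertices --- none of which you set up. Second, the termination claim ``if the link graph is edgeless then the current parts have an $\mathcal R_1(\beta)$-independent transversal'' is asserted, not proved, and it is not obvious for your link relation: $V_a,V_b$ failing to have a common linear block of vertices reachable to linear fractions of each does not visibly produce $v_a\in V_a$, $v_b\in V_b$ with $v_av_b\notin\mathcal R_1(\beta)$. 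Since this is the step that converts the independence bound into $r\le c$, it cannot be left implicit. Third, the size bound $|V_i|\ge(\delta-\beta)n$ is justified by ``each final part contains $\tilde N_{F,\beta,1}(v)\cap S$ up to $\beta n$ unplaced vertices,'' but nothing in the merging scheme as described guarantees that a reachable neighbourhood stays almost inside one final part. You are candid that these are the technical content of~\cite{Han_2020}; I agree, but that is exactly what would need to be supplied for this to be a proof rather than a plan.
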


Next, to show that the subset $S$ in Lemma~\ref{S-closed} is closed,
we use the following definitions introduced by Keevash and Mycroft \cite{Ke2015}. Let $r\geq f\geq k\geq 2$ be integers and let $F$ be a $k$-graph on $f$ vertices. Suppose that $H$ is a $k$-graph with a partition $\mathcal{P} =\{V_0,V_1,\dots,V_r\}$ of $V (H)$. The \textit{index vector} $\mathbf{i}_{\mathcal{P}}(S)\in \mathbb{Z}^r$ of a subset $S\subseteq V(H)$ with respect to $\mathcal{P}$ is the vector whose coordinates are the sizes of the intersections of $S$ with $V_1,\dots,V_r$. We call a vector $\mathbf{i}\in \mathbb{Z}^r$ an \emph{$s$-vector} if all its coordinates are nonnegative and their sum is $s$. Given $\lambda>0$, an $f$-vector $\mathbf{v} \in \mathbb{Z}^r$ is called a \emph{$\lambda$-robust $F$-vector} if at least $\lambda |V(H)|^f$ copies $F'$ of $F$ in $H$ satisfy $\mathbf{i}_{\mathcal{P}}(V(F'))=\mathbf{v}$. Let $I^{\lambda}_{\mathcal{P},F}(H)$ be the set of all $\lambda$-robust $F$-vectors and $L^{\lambda}_{\mathcal{P},F}(H)$ be the lattice generated by the vectors of $I^{\lambda}_{\mathcal{P},F}(H)$. For $j \in [r]$, let $\mathbf{u}_j\in \mathbb{Z}^r$ be the $j^{th}$ unit vector, namely, $\mathbf{u}_j$ has $1$ on the $j^{th}$ coordinate and $0$ on other coordinates. A \textit{transferral} is a non-zero difference $\mathbf{u}_j-\mathbf{u}_{\ell}$ of $\mathbf{u}_j$ and $\mathbf{u}_{\ell}$ for all $1\leq j<{\ell}\leq r$.

Given a partition $\mathcal{P} =\{V_0,V_1,\dots,V_r\}$ of $V (H)$, the following lemma from~\cite{Han2017Minimum} shows that $V\setminus V_0$ is closed
if $\mathbf{u}_j-\mathbf{u}_{\ell}\in L^{\lambda}_{\mathcal{P},F}(H)$ for all $1\leq j<{\ell}\leq r$.

\begin{lemma}{\rm{(}\cite[\rm Theorem 3.9]{Han2017Minimum}\rm{)}.}
\label{V-closed}
Let $i_0, r, k,f> 0$ be integers and let $F$ be a $k$-graph of order $f$. Given constants $\varepsilon, \gamma , \lambda > 0$, there exists $\gamma'>0$ and integers $i'_0,n_0$ such that the following holds for all $n\geq n_0$. Let $H$ be a $k$-graph on $n$ vertices with a partition $\mathcal{P} =\{V_0,V_1,\dots,V_r\}$ of $V (H)$ such that for each $j\in [r]$, $|V_j|\geq \varepsilon^2n$ and $V_j$ is $(F,\gamma , i_0)$-closed in $H$. If $\mathbf{u}_j -\mathbf{u}_{\ell} \in  L^{\lambda}_{\mathcal{P},F}(H)$ for all $1 \leq  j <{\ell} \leq r$, then $V(H)\setminus V_0$ is $(F, \gamma ', i'_0)$-closed in $H$. \qed
\end{lemma}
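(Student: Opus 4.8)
The plan is to show that any two vertices $u,v\in V(H)\setminus V_0$ are $(F,\gamma',i'_0)$-reachable in $H$ for a suitably small $\gamma'>0$ and a suitably large integer $i'_0$; since $V(H)\setminus V_0$ is then $(F,\gamma',i'_0)$-closed by definition, this is exactly the lemma. We will work with a hierarchy $1/n\ll 1/i'_0\ll\gamma'\ll\gamma,\lambda,\varepsilon,1/i_0,1/r,1/f$, fixing $i'_0$ large and $\gamma'$ small only at the end. Throughout we use that the hypothesis $\mathbf{u}_j-\mathbf{u}_\ell\in L^{\lambda}_{\mathcal{P},F}(H)$ forces $I^{\lambda}_{\mathcal{P},F}(H)\neq\emptyset$, so $H$ has at least $\lambda n^f$ copies of $F$ and hence, for any fixed $t$ and any bounded vertex set $Z$, one may greedily find $\Omega(n^{tf})$ families of $t$ pairwise vertex-disjoint copies of $F$ avoiding $Z$. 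The first case is easy: if $u,v$ lie in a common part $V_j$ they are $(F,\gamma,i_0)$-reachable, and appending to each of the $\ge\gamma n^{i_0f-1}$ reachable sets a union of $i'_0-i_0$ disjoint copies of $F$ avoiding everything gives $\Omega(n^{i'_0f-1})$ reachable sets at the uniform index $i'_0$.

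Next I would handle $u\in V_p$, $v\in V_q$ with $p\neq q$ by a \emph{completion-vertex} trick. Suppose $w\in V_q$ is $(F,\beta,m_1)$-reachable to $u$ (for some fixed $\beta>0$, $m_1\in\mathbb N$); note $w$ is automatically $(F,\gamma,i_0)$-reachable to $v$ since both lie in $V_q$. Set $W:=W_1\cup W_2\cup\{w\}\cup\Pi$ where $W_1$ is a reachable set for $u$ and $w$, $W_2$ a reachable set for $w$ and $v$, and $\Pi$ a union of enough disjoint copies of $F$, everything pairwise disjoint and avoiding $\{u,v,w\}$. Then $\{u\}\cup W$ partitions into $(\{u\}\cup W_1)$, $(\{w\}\cup W_2)$ and $\Pi$, while $\{v\}\cup W$ partitions into $(\{w\}\cup W_1)$, $(\{v\}\cup W_2)$ and $\Pi$; each block carries an $F$-factor, so $W$ is a reachable set for $u$ and $v$. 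Choosing the number of padding copies so that $|\{u\}\cup W|=i'_0f$ and multiplying the numbers of choices of $w$, $W_1$, $W_2$, $\Pi$, it then suffices to prove: for all $p\neq q$ and all $u\in V_p$, at least $\varepsilon^3n$ vertices of $V_q$ are $(F,\beta,m_1)$-reachable to $u$. Having $w$ sit inside $W$ as a free vertex, available to complete a reachable set on either side, is precisely what absorbs the divisibility mismatch caused by $\mathbf{u}_p$ not being an $f$-vector.

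The remaining, and central, task is this cross-part reachability, and here I would invoke the transferral hypothesis. Write $\mathbf{u}_p-\mathbf{u}_q=\sum_t c_t\mathbf{v}^{(t)}$ with $\mathbf{v}^{(t)}\in I^{\lambda}_{\mathcal{P},F}(H)$ and $c_t\in\mathbb Z$, split it into its positive and negative parts, and realize these parts by vertex-disjoint families of copies of $F$ with unions $\mathcal Q^{+},\mathcal Q^{-}$ such that $\mathbf{i}_{\mathcal P}(\mathcal Q^{+})-\mathbf{i}_{\mathcal P}(\mathcal Q^{-})=\mathbf{u}_p-\mathbf{u}_q$ and $|\mathcal Q^{+}|=|\mathcal Q^{-}|$ (possible by $\lambda$-robustness and greedy packing); in particular $\mathcal Q^{+}$ meets $V_p$ and $\mathcal Q^{-}$ meets $V_q$. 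I would then build the reachable set $W_1$ for $u$ and a typical $w\in V_q$ out of $\mathcal Q^{+}$, $\mathcal Q^{-}$, a within-part reachable set linking $u$ to a vertex of $\mathcal Q^{+}\cap V_p$ and one linking $w$ to a vertex of $\mathcal Q^{-}\cap V_q$ (using the $(F,\gamma,i_0)$-closedness of $V_p,V_q$ and $|V_p|,|V_q|\ge\varepsilon^2n$), further completion vertices threaded as above, and padding copies, all kept pairwise disjoint; by construction $\{u\}\cup W_1$ and $\{w\}\cup W_1$ each split into blocks carrying $F$-factors. Counting the $\Omega(n)$ choices of $w$ and the polynomially many choices at each other step yields $\Omega(n^{m_1f-1})$ reachable sets for each good $w$; fixing $\beta,m_1$ accordingly, then $i'_0\gg m_1,i_0$ and finally $\gamma'$ small, finishes the proof.

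The hard part will be that last construction: a lattice identity among index vectors is only a necessary condition for a \emph{bounded} vertex set to admit an $F$-factor, so the gadget $\mathcal Q^{+}\cup\mathcal Q^{-}$ alone is not a reachable set. One must carry a linear amount of slack throughout — from $\lambda$-robustness and from the within-part reachable sets — and, most delicately, place the completion vertices so that, after removing a reachable set, the leftover of each broken gadget copy is still completed into an $F$-factor on \emph{both} the $u$-side and the $w$-side. Making this bookkeeping consistent is what forces the hierarchy $1/i'_0\ll\gamma'\ll\cdots$ and the total avoidance of $V_0$ (none of the copies of $F$ we build ever uses a vertex of $V_0$).
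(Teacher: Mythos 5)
The lemma you are proving is quoted in the paper only by citation to Han--Zang--Zhao~\cite[Theorem 3.9]{Han2017Minimum}; the paper supplies no proof of it, so there is no in-paper argument to compare yours against, and I can only evaluate the proposal on its own merits.

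Your scaffolding is sound: the within-part case is indeed dispatched by padding with extra disjoint copies of $F$; transitivity of reachability through an intermediate vertex $w\in V_q$ correctly bootstraps to a uniform index $i'_0$; the lattice hypothesis is the right engine for the cross-part step; and the count of $\Omega(n)$ choices of $w$ is needed to recover the exponent $i'_0 f-1$. You also rightly flag that the gadget $\mathcal Q^+\cup\mathcal Q^-$ alone is not a reachable set. But the construction of $W_1$ that you actually sketch does not close that gap, and the obstruction is structural, not merely a matter of careful bookkeeping as your last paragraph suggests. You take $W_1$ to be $V(\mathcal Q^+)\cup V(\mathcal Q^-)$ plus one within-part reachable set $S_x$ linking $u$ to a single $x\in V(\mathcal Q^+)\cap V_p$, one set $S_y$ linking $w$ to a single $y\in V(\mathcal Q^-)\cap V_q$, and padding. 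Then in $\{u\}\cup W_1$, after peeling off $\{u\}\cup S_x$, the padding, and the $C$ intact copies in $\mathcal Q^+$, what remains is $V(\mathcal Q^-)\cup S_y$; the block $\{y\}\cup S_y$ factors, but this strands $V(\mathcal Q^-)\setminus\{y\}$, a union of broken copies of $F$ that need not admit an $F$-factor. Two links are simply not enough, and --- a secondary slip --- you invoke only the closedness of $V_p$ and $V_q$, while the gadgets $\mathcal Q^\pm$ in general meet all parts $V_1,\dots,V_r$, so you will need the closedness of every $V_j$.

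The repair is to thread a link for \emph{every} vertex of $\{u\}\cup V(\mathcal Q^-)$. Writing $\mathbf u_p-\mathbf u_q=\sum_s c_s\mathbf v_s$ with $\mathbf v_s\in I^\lambda_{\mathcal P,F}(H)$ and $c_s\in\mathbb Z$ of bounded size, one has $\mathbf u_p+\sum_{c_s<0}|c_s|\mathbf v_s=\mathbf u_q+\sum_{c_s>0}c_s\mathbf v_s$; both sides are $(Cf+1)$-vectors with $C=\sum_{c_s>0}c_s=\sum_{c_s<0}|c_s|$ (these agree because every $\lambda$-robust vector has coordinate sum $f$, which also forces all chosen copies of $F$ to avoid $V_0$). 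Hence, after choosing disjoint realizations $\mathcal Q^\pm$ with the prescribed index vectors (possible in $\Omega(n^{2Cf})$ ways by $\lambda$-robustness and greedy packing), the sets $A:=\{u\}\cup V(\mathcal Q^-)$ and $B:=\{w\}\cup V(\mathcal Q^+)$ have the \emph{same} index vector and avoid $V_0$. Enumerate $A=\{a_1,\dots,a_{Cf+1}\}$ and $B=\{b_1,\dots,b_{Cf+1}\}$ so that $a_m,b_m$ lie in the same part $V_{j_m}$ for every $m$; each pair is $(F,\gamma,i_0)$-reachable since $V_{j_m}$ is closed and $|V_{j_m}|\ge\varepsilon^2 n$, so choose pairwise-disjoint reachable sets $T_1,\dots,T_{Cf+1}$ disjoint from $A\cup B$ and set $W_1:=V(\mathcal Q^+)\cup V(\mathcal Q^-)\cup T_1\cup\cdots\cup T_{Cf+1}$. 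Then $\{u\}\cup W_1$ partitions into $V(\mathcal Q^+)$ together with the blocks $\{a_m\}\cup T_m$, and $\{w\}\cup W_1$ partitions into $V(\mathcal Q^-)$ together with the blocks $\{b_m\}\cup T_m$, and each block carries an $F$-factor. The overcounting in the product of choices is bounded by a constant depending only on $C,f,i_0$, so this gives $\Omega(n^{m_1 f-1})$ reachable sets with $m_1=C+(Cf+1)i_0$, and with this replacement the rest of your plan goes through.
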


Our proof of Lemma~\ref{Absorbing-Lemma} is based on the following lemma.

\begin{lemma} \label{absorb}
Given $0 <\eta , \gamma ' <1$, and $i'_0, k,f\in \mathbb{N}$, let $F$ be a $k$-graph with $f$ vertices.
There exists $\omega>0$ such that the following holds for all sufficiently large integers $n$.
Suppose $H$ is a $k$-graph on $n$ vertices with the following two properties.
\begin{enumerate}[label=$(\roman*)$]
  \item[{\rm (i)}] For any $v\in V(H)$, there are at least $\eta n^{f-1}$ copies of $F$ containing it;
  \item[{\rm (ii)}]  there exists $V_0\subseteq V$ with $|V_0|\leq \eta^2n$ such that $V (H)\setminus V_0$ is $(F, \gamma ', i'_0)$-closed in $H$.
\end{enumerate}
Then there exists a vertex set $W$ with $V_0\subseteq W\subseteq V(H)$ and $|W|\leq  \eta n$ such that for any vertex set $U \subseteq V (H)\setminus W$ with $|U| \leq \omega n$ and $|U| \in f\mathbb{N}$, both $H[W]$ and $H[U \cup W]$ contain $F$-factors. \qed
\end{lemma}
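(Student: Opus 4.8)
The plan is to build the absorbing set $W$ by the standard "absorbers + random subset" strategy of Han~\cite{Han2017Perfect}, incorporating both the supersaturation hypothesis (i) and the closedness hypothesis (ii). First I would define, for a single vertex $v \in V(H)$, an \emph{absorber} to be an $f$-set $A_v$ that is disjoint from $v$ such that both $H[A_v]$ and $H[A_v \cup \{v\}]$ contain $F$-factors (the latter being a single copy of $F$). More generally, since we need $W$ to absorb a set $U$ of size up to $\omega n$ all at once, I would instead work with absorbers for \emph{balanced} tuples of vertices: using the $(F,\gamma',i_0')$-closedness of $V(H)\setminus V_0$, any two vertices $u,u' \in V(H)\setminus V_0$ have $\Omega(n^{i_0'f-1})$ common reachable sets, which one chains together (via a connecting argument through a bounded number of intermediate vertices, all but $|V_0|$ of $V(H)$ being in one closed class) to produce, for \emph{any} $f$-set $T \subseteq V(H)\setminus V_0$, a family of $\Omega(n^{mf})$ "flexible" gadgets of bounded size $mf$ that have an $F$-factor both with and without the vertices of $T$ mixed in. Here hypothesis (i) handles the vertices of $V_0$: each such vertex lies in $\eta n^{f-1}$ copies of $F$, so it too can be given absorbers.

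Next I would select $W$ as follows. Take $V_0 \subseteq W$ automatically. Then choose a random subset $W' \subseteq V(H)\setminus V_0$ where each vertex is included independently with probability $q = \Theta(\eta)$ small; standard first- and second-moment / concentration arguments show that with positive probability $|W'| \le \eta n /2$, every vertex $v$ (and, more importantly, every $f$-set needed) has $\Omega(n^{?})$ absorbing gadgets entirely inside $W'$, and the "defect" — vertices not covered by the greedy extraction of disjoint gadgets inside $W'$ — is a set $W''$ of size at most, say, $|W'|$. One then sets $W = V_0 \cup W' \cup (\text{a small correction set})$, ensuring $|W| \le \eta n$ and that $H[W]$ itself has an $F$-factor (the gadgets used in their "factor" mode tile $W'$, while $V_0$ is tiled by its own absorbers which live in $W$). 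The key point is that each absorbing gadget can be "switched" between its two $F$-factor configurations independently, so given any $U$ with $|U| \le \omega n$ and $f \mid |U|$, we partition $U$ into $f$-sets, match each to a distinct gadget in $W$ (possible since we have far more than $\omega n$ gadgets, taking $\omega \ll q$), switch those gadgets to absorb mode, and tile the remainder of $W$ with untouched gadgets — yielding an $F$-factor of $H[W \cup U]$.

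The main obstacle I expect is producing the \emph{balanced/flexible} absorbing gadgets: a single absorber $A_v$ absorbs one vertex, but to absorb an arbitrary $f$-set $U'$ with $f \mid |U'|$ we need gadgets whose two configurations differ by exactly $U'$ being "swallowed," and constructing these uniformly over all $f$-subsets $U'$ of $V(H)\setminus W$ is exactly where closedness (ii) is used — reachability lets us move vertices between "slots" within a closed class, so a chain of $i_0'$-reachable sets of bounded length gives a gadget that is indifferent to which $f$ vertices of the closed class are plugged in. Formally this is where one invokes that $V(H) \setminus V_0$ is $(F,\gamma',i_0')$-closed together with hypothesis (i) for the few vertices in $V_0$, and then a routine (but careful) counting/probabilistic-deletion argument bounds the number of gadgets destroyed by overlaps. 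The divisibility condition $|U| \in f\mathbb{N}$ is essential here and is what makes the $f$-set partition of $U$ possible; without it no such $W$ exists. The remaining steps — the probabilistic selection of $W$, concentration, and the greedy disjointification of gadgets — are all standard and I would only sketch them.
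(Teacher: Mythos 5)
Your sketch reproduces the standard lattice-based absorption argument and matches what the paper does here: the paper gives no proof of its own, but observes that the argument of Han--Zang--Zhao \cite[Lemma 3.6]{Han2017Minimum} for $F = K^{(3)}_{t,t,t}$ carries over verbatim to arbitrary $k$-graphs $F$ (supersaturation for every vertex including those of $V_0$, $(F,\gamma',i_0')$-closedness outside $V_0$, a random bounded-size family of absorbers for $f$-sets made vertex-disjoint by a deletion argument, and then a greedy assignment of the $f$-subsets of $U$ to unused absorbers), which is exactly the plan you outline. One local slip: the opening definition of a single-vertex absorber as an $f$-set $A_v$ with both $H[A_v]$ and $H[A_v\cup\{v\}]$ having $F$-factors is impossible on divisibility grounds since $f \nmid f+1$ for $f\geq 2$; the correct primitive is the $(if-1)$-set $W$ from the definition of $(F,\beta,i)$-reachability, so that $|\{u\}\cup W| = if$, and your subsequent pivot to absorbers for balanced $f$-tuples (of size $mf$, toggling between two $F$-factor configurations) is the right fix and is what the cited proof actually does.
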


Han--Zang--Zhao~\cite[Lemma 3.6]{Han2017Minimum} proved Lemma~\ref{absorb} for $F$ being a complete $3$-partite $3$-graph.
Their proof indeed works for any $k$-graph $F$ and thus we choose not to reproduce the proof here.
Finally we put everything together and prove Lemma~\ref{Absorbing-Lemma}.

\begin{proof}[Proof of Lemma~\ref{Absorbing-Lemma}]
Let $F\in \cover_k$ with $f:=v(F)$ and $0<p,\alpha,\varepsilon<1$.
We first choose $0<\eta<\varepsilon$ by Lemma~\ref{supersaturation} with $p$ and $\min\{\alpha, p/k!\}$ in place of $\alpha$.
For other parameters, we select $0<\beta\ll\delta\ll\eta,\alpha,1/f$ as follows.
We apply Lemma \ref{c+1} and obtain $\beta$. Set $c := \lfloor 1/\alpha \rfloor$.
Next we apply Lemma \ref{partation} with $\delta,c,\beta$ and obtain $r,\gamma>0$. Then we apply Lemma~\ref{V-closed} with $\lambda = (\eta/f)(\delta/2)^{f}$ and obtain $\gamma'>0$ and $i'_0\in \mathbb N$.
Finally, let $\omega$ be given by Lemma~\ref{absorb} with inputs $\eta$, $\gamma'$ and $i'_0$.
Let $\mu>0$ be sufficiently small and $n\in f\mathbb{N}$ be sufficiently large.

Let $H$ be an $(n,p,\mu,\alpha)$ $k$-graph.
By Lemma~\ref{supersaturation}, every vertex $v\in V(H)$ is contained in at least $\eta n^{f-1}$ copies of $F$.
By Lemma \ref{c+1}, every set of $c+1$ vertices in $V(H)$ contains two vertices that are $(F,\beta,1)$-reachable.
By Lemma \ref{S-closed}, there exists $S\subseteq V(H)$ such that $|V(H)\setminus S|\le c\delta n$ and $|\widetilde{N}_{F, \beta, 1}(v)\cap S|\geq\delta n$ for every $v\in S$.
Therefore, following Lemma \ref{partation}, we obtain a partition $\mathcal{P}=\{V_0,V_1,V_2,\ldots,V_r\}$ of $V(H)$ with $r\leq\min\{c,1/\delta\}$ such that for any $i \in [r]$, $|V_i|\geq (\delta -\beta)n$ and $V_i$ is $(F, \gamma, 2^{c-1})$-closed in $H$.
Next we show that $\mathbf{u}_j -\mathbf{u}_{\ell} \in  L^{\lambda}_{\mathcal{P},F}(H)$ for all $1 \leq  j <{\ell} \leq r$.
Note that this will conclude that $V(H)\setminus V_0$ is $(F,\gamma',i'_0)$-closed by Lemma \ref{V-closed} and then the desired absorption set exists by Lemma \ref{absorb}, finishing the proof.

Without loss of generality, we may assume that $j=1$ and ${\ell}=2$.
For $i\in\{1,2\}$, let
\[X_i=\left\{v\in V_i~:~{\rm deg}_{V_1}(v)<\frac{p}{2(k-1)!}|V_1|^{k-1}\right\}
\]
and $V'_i=V_i\setminus X_i$. Since $H$ is $(p,\mu)$-dense, we obtain
\[
p|X_i| |V_1|^{k-1}-\mu n^k\leq e_H(X_i,V_1,\ldots,V_1)\leq \frac{p}{2}|X_i||V_1|^{k-1},
\]
which implies that $|X_i|\leq\frac{\mu n^k}{(p/2)|V_1|^{k-1}}<\sqrt{\mu}|V_1|$. Therefore, for any $v\in V'_i$ it holds that
\[{\rm deg}_{V'_1}(v)\geq\frac{p}{2(k-1)!}|V_1|^{k-1}-\sqrt{\mu}|V_1|^{k-1}>\frac{p}{k!}|V_1|^{k-1}.\]
Consequently, $H[V'_1]$ is a $(|V'_1|,p,\frac{2^k\mu}{\delta^k},\frac{p}{k!})$ $k$-graph and $H[V'_1\cup\{v\}]$ is a $(|V'_1|+1,p,\frac{2^k\mu}{\delta^k},\frac{p}{k!})$ $k$-graph for any $v\in V'_2$.
Since $F\in \cover_k$, it follows that every vertex $v\in V'_1~(\text{resp.}~ u\in V'_2)$ is contained in at least $\eta |V'_1|^{f-1}$ copies of $F$ in $H[V'_1]~(\text{resp. in}~ H[V'_1\cup\{u\}])$.
This implies $(\eta/f) |V'_1|^{f} \ge (\eta/f)(\delta/2)^{f} n^f$ copies of $F$ in $V'_1$,
and $|V'_2|\cdot \eta |V'_1|^{f-1}\ge \eta (\delta/2)^f n^f$ copies of $F$ with one vertex in $V'_2$ and $f-1$ vertices in $V'_1$.
By the choice of $\lambda$, we deduce that $\mathbf{v}_1,\mathbf{v}_2\in L^{\lambda}_{\mathcal{P},F}(H)$ with $\mathbf{v}_1=f\mathbf{u}_1$ and $\mathbf{v}_2=(f-1)\mathbf{u}_1+\mathbf{u}_2$.
Thus $\mathbf{u}_1 -\mathbf{u}_2=\mathbf{v}_1 -\mathbf{v}_2\in L^{\lambda}_{\mathcal{P},F}(H)$.
\end{proof}

\section{The Hypergraph Regularity Lemma}
In this section, we state the regularity lemma, the counting lemma and the restriction lemma for hypergraphs. Then we shall prove Lemma~\ref{supersaturation} and the backward implication of Theorem~\ref{k-partite}.
We follow the approach from R\"odl and Schacht~\cite{regular-lemmas2007,MR2351689}, combined with results from~\cite{CFKO} and~\cite{K2010Hamilton}. The central concepts of hypergraph regularity lemma are \emph{regular complex} and \emph{equitable partition}. Before we state the hypergraph regularity lemma, we introduce some necessary notation.
For reals $x,y,z$ we write $x = y \pm z$ to denote that $y-z \leq x \leq y+z$.

\subsection{Regular complexes}
A \emph{hypergraph} $\mathcal{H}$ consists of a vertex set $V(\mathcal{H})$ and an edge set $E(\mathcal{H})$, where every edge $e \in E(\mathcal{H})$ is a non-empty subset of $V(\mathcal{H})$. So a $k$-graph as defined earlier is a $k$-uniform hypergraph in which every edge has size $k$. A hypergraph $\mathcal{H}$ is a \emph{complex} if whenever $e \in E(\mathcal{H})$ and $e'$ is a non-empty subset of $e$ we have that $e'\in E(\mathcal{H})$. All the complexes considered in this paper have the property that all vertices are contained in an edge. A complex $\mathcal{H}^{\leq k}$ is a \emph{$k$-complex} if all the edges of $\mathcal{H}^{\leq k}$ consist of at most $k$ vertices. Given a $k$-complex $\mathcal{H}^{\leq k}$, for each $i \in [k]$, the edges of size $i$ are called \emph{$i$-edges} of $\mathcal{H}^{\leq k}$ and we denote by $H^{(i)}$ the \emph{underlying $i$-graph} of $\mathcal{H}^{\leq k}$: the vertices of $H^{(i)}$ are those of $\mathcal{H}^{\leq k}$ and the edges of $H^{(i)}$ are the $i$-edges of $\mathcal{H}^{\leq k}$. Note that a $k$-graph $H$ can be turned into a $k$-complex by making every edge into a \emph{complete $i$-graph} $K^{(i)}_k$ (i.e., consisting of all $\binom{k}{i}$ different $i$-tuples on $k$ vertices), for each $ i\in [k]$.

Given positive integers $s\geq k$, an \emph{$(s,k)$-graph} $H^{(k)}_s$ is an $s$-partite $k$-graph, by which we mean that the vertex set of $H^{(k)}_s$ can be partitioned into sets $V_1,\dots, V_s$ such that every edge of $H^{(k)}_s$ meets each $V_i$ in at most one vertex for $i\in [s]$. Similarly, an \emph{$(s,k)$-complex} $\mathcal{H}^{\leq k}_s$ is an $s$-partite $k$-complex.

Given $i \geq 2$, let $H^{(i)}_i$ and $H^{(i-1)}_i$ be on the same vertex set. We denote by $\mathcal{K}_i(H^{(i-1)}_i)$ for the family of $i$-sets of vertices which form a copy of the complete $(i-1)$-graph $K^{(i-1)}_i$ in $H^{(i-1)}_i$. We define the \emph{density} of $H^{(i)}_i$ w.r.t. (with respect to) $H^{(i-1)}_i$ to be
\[
d(H^{(i)}_i|H^{(i-1)}_i):= \begin{cases}
\frac{|E(H^{(i)}_i)\cap \mathcal{K}_i(H^{(i-1)}_i)|}{|\mathcal{K}_i(H^{(i-1)}_i)|} &\text{if\ } |\mathcal{K}_i(H^{(i-1)}_i)|>0,\\
0&\text{otherwise}.
\end{cases}
\]
More generally, if $\mathbf{Q}:= (Q(1), Q(2),\dots, Q(r))$ is a collection of $r$ subhypergraphs of $H^{(i-1)}_i$, we define $\mathcal{K}_i(\mathbf{Q}):= \bigcup^r_{j=1}\mathcal{K}_i(Q(j))$ and
\[
d(H^{(i)}_i|\mathbf{Q}):= \begin{cases}
\frac{|E(H^{(i)}_i)\cap \mathcal{K}_i(\mathbf{Q})|}{|\mathcal{K}_i(\mathbf{Q})|} &\text{if\ } |\mathcal{K}_i(\mathbf{Q})|>0,\\
0&\text{otherwise}.
\end{cases}
\]

We say that an $H^{(i)}_i$ is \emph{$(d_i, \delta, r)$-regular} w.r.t. an $H^{(i-1)}_i$ if every $r$-tuple $\mathbf{Q}$ with $|\mathcal{K}_i(\mathbf{Q})| \geq \delta|\mathcal{K}_i(H^{(i-1)}_i)|$ satisfies $d(H^{(i)}_i|\mathbf{Q}) = d_i \pm \delta$. Instead of $(d_i, \delta, 1)$-regular, we refer to $(d_i, \delta)$-\emph{regular}.
Moreover, for $s \geq i\geq 2$, we say that $H^{(i)}_{s}$ is \emph{$(d_i, \delta,r)$-regular} w.r.t. $H^{(i-1)}_{s}$ if for every $\Lambda_i\in  [s]^i$ the restriction $H^{(i)}_{s}[\Lambda_i]=H^{(i)}_{s}[\cup_{\lambda\in \Lambda_i }V_{\lambda}]$ is \emph{$(d_i, \delta,r)$-regular} w.r.t. the restriction $H^{(i-1)}_{s}[\Lambda_i]=H^{(i-1)}_{s}[\cup_{\lambda\in \Lambda_i }V_{\lambda}]$.

\begin{defi}[$(d_2, \dots , d_k, \delta_k, \delta, r)$-regular complexes]
Given $3 \leq k \leq s$ and an $(s, k)$-complex $\mathcal{H}$, we say that $\mathcal{H}$ is \emph{$(d_2, \dots , d_k, \delta_k, \delta, r)$-regular} if the following conditions hold:\vspace{3mm}

$\bullet$ for every $i = 2, \dots , k-1$ and every $i$-tuple $\Lambda_i$ of vertex classes, either $H_s^{(i)}[\Lambda_i]$ is $(d_i, \delta)$-regular w.r.t $H_{s}^{(i-1)}[\Lambda_i]$ or $d(H_s^{(i)}[\Lambda_i]|H^{(i-1)}_s[\Lambda_i]) = 0$;

$\bullet$ for every $k$-tuple $\Lambda_k$ of vertex classes either $H^{(k)}_s[\Lambda_k]$ is $(d_k, \delta_k, r)$-regular w.r.t $H^{(k-1)}_s[\Lambda_k]$ or $d(H^{(k)}_s[\Lambda_k]|H^{(k-1)}_s[\Lambda_k]) = 0$.\vspace{3mm}
\end{defi}

\subsection{Equitable partition}\label{section-equ-partition}
Suppose that $V$ is a finite set of vertices and $\mathcal{P}^{(1)}$ is a partition of $V$ into sets $V_1, \dots , V_{a_1}$, which will be called \emph{clusters}. Given $k \geq 3$ and any $j \in [k]$, we denote by $\mathrm{Cross}_j = \mathrm{Cross}_j (\mathcal{P}^{(1)})$, the set of all those $j$-subsets of $V$ that meet each $V_i$ in at most one vertex for $1\leq i \leq a_1$. For every subset $\Lambda \subseteq [a_1]$ with $2 \leq |\Lambda| \leq k-1$, we write $\mathrm{Cross}_{\Lambda}$ for all those $|\Lambda|$-subsets of $V$ that meet each $V_i$ with $i \in \Lambda$. Let $\mathcal{P}_{\Lambda}$ be a partition of $\mathrm{Cross}_{\Lambda}$. We refer to the partition classes of $\mathcal{P}_{\Lambda}$ as \emph{cells}. For each $i = 2, \dots , k-1$, let $\mathcal{P}^{(i)}$ be the union of all the $\mathcal{P}_{\Lambda}$ with $|\Lambda| = i$. So $\mathcal{P}^{(i)}$ is a partition of $\mathrm{Cross}_i$ into several $(i,i)$-graphs.

Set $1 \leq i \leq j$. For every $i$-set $I\in \mathrm{Cross}_i$, there exists a unique cell $P^{(i)}(I)\in \mathcal{P}^{(i)}$ so that $I\in P^{(i)}(I)$. We define for every $j$-set $J\in \mathrm{Cross}_j$ the \emph{polyad} of $J$ as:
\[
\hat{P}^{(i)}(J):=\bigcup\big\{P^{(i)}(I): I\in [J]^{i}\big\}.
\]
So we can view $\hat{P}^{(i)}(J)$ as a $(j,i)$-graph (whose vertex classes are clusters intersecting $J$). Let $\mathcal{\hat{P}}^{(j-1)}$ be the set consisting of all the $\hat{P}^{(j-1)}(J)$ for all $J\in \mathrm{Cross}_j$. It is easy to verify $\{\mathcal{K}_j(\hat{P}^{(j-1)}) : \hat{P}^{(j-1)}\in \mathcal{\hat{P}}^{(j-1)}\}$ is a partition of $\mathrm{Cross}_j$.

Given a vector of positive integers $\mathbf{a}=(a_1,\dots, a_{k-1})$, we say that $\mathcal{P}(k-1) = \{\mathcal{P}^{(1)}, \dots ,\mathcal{P}^{(k-1)}\}$ is a \emph{family of partitions} on $V$, if the following conditions hold:\vspace{3mm}

$\bullet$ $\mathcal{P}^{(1)}$ is a partition of $V$ into $a_1$ clusters.

$\bullet$ $\mathcal{P}^{(i)}$ is a partition of $\mathrm{Cross}_i$ satisfying $|\{P^{(i)}\in \mathcal{P}^{(i)}: P^{(i)}\subseteq \mathcal{K}_i(\hat{P}^{(i-1)})\}|=a_i$ for every $\hat{P}^{(i-1)}\in \mathcal{\hat{P}}^{(i-1)}$. Moreover for every $P^{(i)}\in \mathcal{P}^{(i)}$, there exists a $\hat{P}^{(i-1)}\in \mathcal{\hat{P}}^{(i-1)}$ such that $P^{(i)} \subseteq \mathcal{K}_i(\hat{P}^{(i-1)})$.

So for each $J \in \mathrm{Cross}_j$ we can view $\bigcup^{j-1}_{i=1}\hat{P}^{(i)}(J)$ as a $(j, j-1)$-complex.
\vspace{3mm}

\begin{defi}[$(\eta, \delta, t)$-equitable]\label{eq-partition}
Suppose $V$ is a set of $n$ vertices, $t$ is a positive integer and $\eta, \delta>0$. We say a family of partitions $\mathcal{P}=\mathcal{P}(k-1)$ is \emph{$(\eta, \delta, t)$-equitable} if it satisfies the following:
\begin{enumerate}
  \item $\mathcal{P}^{(1)}$ is a partition of $V$ into $a_1$ clusters of equal size, where $1/\eta \leq a_1 \leq t$ and $a_1$ divides $n$;
  \item for all $i = 2, \dots , k-1$, $\mathcal{P}^{(i)}$ is a partition of $\mathrm{Cross}_i$ into at most $t$ cells;
  \item there exists $\mathbf{d}= (d_2, \dots , d_{k-1})$ such that $d_i \geq 1/t$ and $1/d_i \in \mathbb{N}$ for all $i = 2, \dots , k-1$;
  \item for every $k$-set $K \in \mathrm{Cross}_k$, the $(k, k-1)$-complex $\bigcup^{k-1}_{i=1}\hat{P}^{(i)}(K)$ is $(\mathbf{d}, \delta,\delta,1)$-regular.
\end{enumerate}
\end{defi}
Note that the final condition implies that the cells of $\mathcal{P}^{(i)}$ have almost equal size for all $i = 2, \dots , k-1$.

\subsection{Statement of the regularity lemma.}
Let $\delta_k > 0$ and $r \in \mathbb{N}$. Suppose that $H$ is a $k$-graph on $V$ and $\mathcal{P} = \mathcal{P}(k-1)$ is a family of partitions on $V$. Given a polyad $\hat{P}^{(k-1)} \in \hat{\mathcal{P}}^{(k-1)}$, we say that $H$ is \emph{$(\delta_k, r)$-regular} w.r.t. $\hat{P}^{(k-1)}$ if $H$ is $(d_k, \delta_k, r)$-regular w.r.t. $\hat{P}^{(k-1)}$ for some $d_k$. Finally, we define that $H$ is \emph{$(\delta_k, r)$-regular} w.r.t. $\mathcal{P}$.

\begin{defi}[\emph{$(\delta_k, r)$-regular} w.r.t. $\mathcal{P}$]
We say that a $k$-graph $H$ is \emph{$(\delta_k, r)$-regular} w.r.t. $\mathcal{P} = \mathcal{P}(k-1)$ if
\[
\big|\bigcup\big\{\mathcal{K}_k(\hat{P}^{(k-1)}) : \hat{P}^{(k-1)}\in \mathcal{\hat{P}}^{(k-1)}\\
\text{and\ } H \text{\ is\ not\ } (\delta_k, r)\text{-regular\ w.r.t.\ } \hat{P}^{(k-1)} \big\}\big| \le \delta_k |V|^k.
\]
\end{defi}
This means that no more than a $\delta_k$-fraction of the $k$-subsets of $V$ form a $K_k^{(k-1)}$ that lies within a polyad with respect to which $H$ is not regular.

Now we are ready to state the regularity lemma.
\begin{thm}[Regularity lemma \rm{\cite[Theorem 17]{regular-lemmas2007}}] \label{Reg-lem}
Let $k\geq 2$ be a fixed integer. For all positive constants $\eta$ and $\delta_k$ and all functions $r:\mathbb{N} \rightarrow \mathbb{N}$ and $\delta:\mathbb{N} \rightarrow (0,1]$, there are integers $t$ and $n_0$ such that the following holds.
For every $k$-graph $H$ of order $n\ge n_0$ and $t!$ dividing $n$, there exists a family of partitions $\mathcal{P}= \mathcal{P}(k-1)$ of $V(H)$ such that\vspace{2mm}

$(1)$ $\mathcal{P}$ is $(\eta, \delta(t), t)$-equitable and

$(2)$ $H$ is $(\delta_k, r(t))$-regular w.r.t. $\mathcal{P}$.
\end{thm}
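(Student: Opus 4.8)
The plan is to prove this by the \emph{energy-increment} (index-pumping) strategy, which lifts Szemer\'edi's original proof of the graph regularity lemma to the layered structure of hypergraph partitions. I would run an outer iteration producing a sequence of successively finer families of partitions $\mathcal{P}_0, \mathcal{P}_1, \dots$ of $V(H)$, and stop as soon as both conclusions $(1)$ and $(2)$ hold. The skeleton is: \emph{(a)} attach to each family a bounded, refinement-monotone numerical index; \emph{(b)} show that a family which is equitable but fails to be $(\delta_k, r)$-regular (or fails the $(d_i,\delta)$-regularity demanded at some intermediate level) admits an equitable refinement whose index is larger by an absolute constant; \emph{(c)} conclude termination from boundedness, which also yields the bound on the number $t$ of parts.

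For step (a) I would use a mean-square-density functional: for a family $\mathcal{P}=\mathcal{P}(k-1)$ put
\[
\mathrm{ind}(\mathcal{P}) \;=\; \sum_{i=2}^{k} \;\sum_{\hat{P}^{(i-1)}\in\hat{\mathcal{P}}^{(i-1)}} \frac{|\mathcal{K}_i(\hat{P}^{(i-1)})|}{\binom{n}{i}}\; d\big(H^{(i)}\,\big|\,\hat{P}^{(i-1)}\big)^2,
\]
where at the top level $H^{(k)}=H$ and at intermediate levels one tracks the densities of the auxiliary $i$-graphs relative to their polyads. This lies in $[0,k]$, and by Cauchy--Schwarz it is non-decreasing under refinement. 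The heart of the argument --- and the step I expect to be the main obstacle --- is (b): given a polyad $\hat{P}^{(k-1)}$ with respect to which $H$ is not $(\delta_k,r)$-regular, one extracts the witnessing $r$-tuple $\mathbf{Q}$ of subhypergraphs on which $d(H\mid\mathbf{Q})$ deviates from $d(H\mid\hat{P}^{(k-1)})$ by more than $\delta_k$, and splits the cells of $\mathcal{P}^{(k-1)}$ (and below, as forced) along the atoms of the common refinement of all such witnesses, over all bad polyads simultaneously; a defect-form of Cauchy--Schwarz then gives an index gain of an absolute $c(\delta_k)>0$. Since $\mathrm{ind}\le k$, the outer loop runs at most $k/c(\delta_k)$ times, bounding $t$.

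The subtleties --- where most of the real work sits --- are threefold. First, the split at level $i$ must be performed consistently for \emph{every} polyad at once and must refine the existing cells so that the new $\mathcal{P}^{(i)}$ is again a genuine partition of $\mathrm{Cross}_i$, indexed compatibly with $\mathcal{P}^{(i-1)}$ in the sense of Section~\ref{section-equ-partition}. Second, after refining one must restore $(\eta',\delta',t')$-equitability: re-equalize cluster sizes, re-balance cells, and re-select the density vector $\mathbf{d}=(d_2,\dots,d_{k-1})$ with $1/d_i\in\mathbb{N}$ and $d_i\ge 1/t'$; this forces a ``cleaning'' step in which a small proportion of $i$-sets is shunted into an exceptional cell and the remainder re-partitioned, and one must verify the exceptional part stays negligible. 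Third --- and this is the genuinely hypergraph-specific difficulty --- the two regularity notions in the regular-complex definition behave differently: absolute $(d_i,\delta)$-regularity for $2\le i\le k-1$ versus $(d_k,\delta_k,r)$-regularity at the top, and the refinements needed to fix the top level must not destroy regularity already achieved at the lower levels.

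I would handle this last point exactly as R\"odl and Schacht do: by an inner induction on the level $i$ (so that a ``$(k{-}1)$-uniform regularity lemma'' feeds into the $k$-uniform step) together with a carefully chosen hierarchy $\delta_{k-1}\gg\delta_{k-2}\gg\cdots$ of auxiliary error parameters, each chosen small relative to the reciprocals of the part-counts that can arise below it. Finally, because the statement quantifies over \emph{functions} $r(\cdot)$ and $\delta(\cdot)$ evaluated at the final $t$, I would first run the whole iteration with a crude fixed $\delta$ to pin down $t$, then reveal $\delta(t)$ and $r(t)$ and perform one last equitable refinement to meet these finer demands (a standard application of the same increment lemma with the sharper parameters). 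I would not attempt to extract the explicit tower-type bound on $t$ beyond noting that it is the obvious consequence of iterating the index increment.
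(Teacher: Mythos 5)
The paper does not prove this theorem; it is quoted verbatim from R\"odl and Schacht \cite[Theorem~17]{regular-lemmas2007} as an imported black box, so there is no in-paper proof to compare against. Your sketch is in the right spirit of the R\"odl--Schacht energy-increment argument: a mean-square-density index at each level, a defect Cauchy--Schwarz increment step, re-equitabilization, and an inner induction on the uniformity $i$ so that a $(k{-}1)$-uniform regularity lemma powers the $k$-uniform step.

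Two cautions about the places you flagged as ``subtleties,'' since they are where the real difficulty lives. First, the single global index you wrote down, summing over all levels $i=2,\dots,k$, is not literally how R\"odl--Schacht organize the bookkeeping: their proof is a genuine induction on uniformity, with the lower-level partition produced by an application of the $(k{-}1)$-uniform lemma to a carefully chosen family of auxiliary $(k{-}1)$-graphs, and then a \emph{separate} increment argument on the top-level density, not one monotone functional covering all levels at once. Trying to run a single combined index makes it hard to show that fixing the top level does not degrade the already-obtained regularity at the lower levels (you notice this yourself). Second, your treatment of the functional parameters --- ``run with a crude fixed $\delta$ to pin down $t$, then reveal $\delta(t)$ and $r(t)$ and do one more refinement'' --- is too loose: that last refinement can increase the number of parts, which changes what $\delta(t)$ and $r(t)$ should be, and iterating this naively need not terminate. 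The actual argument avoids this circularity by bounding \emph{in advance} the entire increasing sequence of possible part-counts (a tower whose height depends only on $\delta_k$), and then applying $\delta(\cdot)$ and $r(\cdot)$ along that pre-computed trajectory, so the final $t$ is determined before any $\delta(t)$ or $r(t)$ is ever consulted. Neither of these is a fatal flaw in a high-level sketch, but if you intend this as a proof rather than a pointer, they are exactly the steps that would need to be made precise.
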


Note that the constants in Theorem~\ref{Reg-lem} can be chosen to satisfy the following hierarchy:
\[
\frac{1}{n_0}\ll \frac{1}{r}=\frac{1}{r(t)}, \delta=\delta(t)\ll \min\{\delta_k,1/t\}\ll \eta.
\]
Given $d\in (0,1)$, we say that an edge $e$ of $H$ is \emph{$d$-useful} if it lies in $\mathcal{K}_k (\hat{P}^{(k-1)})$ for some $\hat{P}^{(k-1)} \in \hat{\mathcal{P}}^{(k-1)}$ such that $H$ is $(d_k, \delta_k, r)$-regular w.r.t $\hat{P}^{(k-1)}$ for some $d_k\geq d$. If we choose $d\gg \eta$, then the following lemma will be helpful in later proofs.

\begin{lemma}\rm{(\cite[Lemma 4.4]{K2010Hamilton}).}\label{useful-edge}
At most $2dn^k$ edges of $H$ are not $d$-useful.
\end{lemma}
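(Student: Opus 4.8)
The plan is to sort the edges of $H$ that fail to be $d$-useful into three groups and bound each group by a direct count.

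The first group consists of edges that are not crossing $k$-sets, i.e.\ some two of their vertices lie in a common cluster of $\mathcal{P}^{(1)}$; such edges can never be $d$-useful, since polyads only cover members of $\mathrm{Cross}_k$. Since $\mathcal{P}^{(1)}$ splits $V$ into $a_1 \ge 1/\eta$ clusters of equal size, the number of such $k$-sets is at most $a_1\binom{n/a_1}{2}\binom{n}{k-2}\le \tfrac{n^k}{2a_1}\le \tfrac{\eta}{2}n^k$. Every crossing $k$-set, on the other hand, lies in $\mathcal{K}_k(\hat{P}^{(k-1)})$ for exactly one polyad $\hat{P}^{(k-1)}\in\mathcal{\hat{P}}^{(k-1)}$, because $\{\mathcal{K}_k(\hat{P}^{(k-1)}) : \hat{P}^{(k-1)}\in\mathcal{\hat{P}}^{(k-1)}\}$ partitions $\mathrm{Cross}_k$; so I classify the remaining non-$d$-useful edges by the polyad containing them. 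The second group consists of crossing edges whose polyad $\hat{P}^{(k-1)}$ is one with respect to which $H$ is not $(\delta_k,r)$-regular; by the definition of ``$(\delta_k,r)$-regular w.r.t.\ $\mathcal{P}$'', the union of all such $\mathcal{K}_k(\hat{P}^{(k-1)})$ has size at most $\delta_k n^k$, so the second group has at most $\delta_k n^k$ edges. The third group is the rest: crossing edges lying in a polyad $\hat{P}^{(k-1)}$ such that $H$ is $(d_k,\delta_k,r)$-regular w.r.t.\ $\hat{P}^{(k-1)}$ for a density $d_k<d$ (otherwise that polyad would witness $d$-usefulness of the edge).

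For the third group I would read the density off from the regularity condition: applying $(d_k,\delta_k,r)$-regularity to the $r$-tuple $\mathbf{Q}$ consisting of $r$ copies of $\hat{P}^{(k-1)}$ (for which $\mathcal{K}_k(\mathbf{Q})=\mathcal{K}_k(\hat{P}^{(k-1)})$) gives $d(H\mid \hat{P}^{(k-1)})=d_k\pm\delta_k\le d+\delta_k$, hence $|E(H)\cap\mathcal{K}_k(\hat{P}^{(k-1)})|\le (d+\delta_k)\,|\mathcal{K}_k(\hat{P}^{(k-1)})|$. Summing over all such polyads and using that their $\mathcal{K}_k$-sets are disjoint subsets of $\mathrm{Cross}_k$, the third group has at most $(d+\delta_k)n^k$ edges. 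Adding the three estimates, the number of non-$d$-useful edges is at most $\bigl(\tfrac{\eta}{2}+2\delta_k+d\bigr)n^k$, which is at most $2dn^k$ by the chosen hierarchy $\delta_k\ll\eta\ll d$ (and $d\gg\eta$ as assumed for $d$-usefulness). There is no serious obstacle; the only points requiring a bit of care are extracting $d(H\mid\hat{P}^{(k-1)})\approx d_k$ from the definition of $(d_k,\delta_k,r)$-regularity, and confirming that the error terms $\tfrac{\eta}{2}$ and $2\delta_k$ are absorbed into the slack between $d$ and $2d$.
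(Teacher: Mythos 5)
Your proof is correct and is essentially the standard argument for this lemma (the one given in the cited source): split the non-$d$-useful edges into non-crossing ones (at most $\eta n^k/2$ by the bound $a_1\geq 1/\eta$), those in irregular polyads (at most $\delta_k n^k$ directly from the definition of $(\delta_k,r)$-regularity w.r.t.\ $\mathcal{P}$), and those in regular polyads of relative density below $d$ (at most $(d+\delta_k)n^k$ after reading the density off the $(d_k,\delta_k,r)$-regularity with $\mathbf{Q}$ the constant $r$-tuple), and then absorb the lower-order terms using the hierarchy $\delta_k\ll\eta\ll d$. The only point worth stating slightly more carefully is that for a polyad that is $(\delta_k,r)$-regular but does not witness $d$-usefulness, \emph{every} admissible $d_k$ is below $d$ (otherwise its edges would be $d$-useful), which is exactly what you use to conclude $d(H\mid\hat P^{(k-1)})<d+\delta_k$.
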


\subsection{Statement of a counting lemma.}
In our proofs we shall also use a counting lemma. Before stating this lemma, we need more definitions.

Suppose that $\mathcal{H}$ is an $(s, k)$-complex with vertex classes $V_1,\dots,V_s$, which all have size $m$. Suppose also that $\mathcal{G}$ is an $(s, k)$-complex with vertex classes $X_1,\dots, X_s$ of size at most $m$. We write $E_i(\mathcal{G})$ for the set of all $i$-edges of $\mathcal{G}$ and $e_i(\mathcal{G}):= |E_i(\mathcal{G})|$. We say that $\mathcal{H}$ \emph{respects the partition} of $\mathcal{G}$ if whenever $\mathcal{G}$ contains an $i$-edge with vertices in $X_{j_1}, \dots , X_{j_i}$, then there is an $i$-edge of $\mathcal{H}$ with vertices in $V_{j_1}, \dots , V_{j_i}$. On the other hand, we say that a labelled copy of $\mathcal{G}$ in $\mathcal{H}$ is \emph{partition-respecting} if for each $i \in[s]$ the vertices corresponding to those in $X_i$ lie within $V_i$. We denote by $|\mathcal{G}|_{\mathcal{H}}$ the number of labelled, partition-respecting copies of $\mathcal{G}$ in $\mathcal{H}$.

\begin{lemma}\label{Counting-lem}{\rm(Counting lemma {\cite[Theorem 4]{CFKO}}).} Let $k,s,r,t,n_0$ be positive integers and
let $d_2,\dots,d_k$, $\delta,\delta_k$ be positive constants such that $1/{d_i}\in \mathbb{N}$ and
\[
1/{n_0}\ll 1/r, \delta\ll \min\{\delta_k, d_2, \dots, d_{k-1} \}\ll \delta_k \ll d_k, 1/s, 1/t.
\]
Then the following holds for all integers $n\ge n_0$. Suppose that $\mathcal{G}$ is an $(s, k)$-complex on $t$ vertices with vertex classes $X_1,\dots, X_s$. Suppose also that $\mathcal{H}$ is a $(\mathbf{d},\delta_k,\delta, r)$-regular $(s, k)$-complex with vertex classes $V_1,\dots,V_s$ all of size $n$, which respects the partition of $\mathcal{G}$. Then
\[
|\mathcal{G}|_{\mathcal{H}}\ge \frac{1}{2}n^t \prod \limits_{i=2}^k d^{e_i(\mathcal{G})}_i.
\]
\end{lemma}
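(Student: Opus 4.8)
The plan is to prove the counting lemma by induction on the uniformity $k$, realising a copy of $\mathcal G$ one constraint at a time. Throughout I would in fact carry the stronger ``dense'' conclusion $|\mathcal G|_{\mathcal H}=(1\pm\varepsilon)\,n^t\prod_{i=2}^k d_i^{e_i(\mathcal G)}$ with $\varepsilon\to 0$ as the parameters improve, since the stated bound $\tfrac12 n^t\prod_{i=2}^k d_i^{e_i(\mathcal G)}$ is not robust enough to survive an induction in which each level could cost a constant factor; the factor $\tfrac12$ is then recovered by simply weakening the final estimate. The base case $k=2$ is the ordinary graph counting lemma: between any two vertex classes joined by an edge of $\mathcal G$ the complex $\mathcal H$ induces a $(d_2,\delta)$-regular bipartite graph, and embedding $x_1,\dots,x_t$ one at a time so that the candidate set of $x_\ell$ is the common neighbourhood inside $V_{j_\ell}$ of the already-chosen images of the neighbours of $x_\ell$, one sees that --- as long as this candidate set stays above $\delta n$ --- each new adjacency shrinks it by a factor $d_2\pm\delta$, giving $(1\pm o(1))n^t d_2^{e_2(\mathcal G)}$ copies.

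For the inductive step, let $\mathcal H^{\le k-1}$ and $\mathcal G^{\le k-1}$ be the $(s,k-1)$-complexes obtained by deleting all $k$-edges. By the definition of a $(d_2,\dots,d_k,\delta_k,\delta,r)$-regular complex, $\mathcal H^{\le k-1}$ is $(d_2,\dots,d_{k-1},\delta,\delta,1)$-regular, so the induction hypothesis gives $|\mathcal G^{\le k-1}|_{\mathcal H^{\le k-1}}=(1\pm o(1))\,n^t\prod_{i=2}^{k-1}d_i^{e_i(\mathcal G)}$. It then remains to show that a $(1\pm o(1))\,d_k^{e_k(\mathcal G)}$-fraction of these copies of $\mathcal G^{\le k-1}$ have, in addition, every $k$-edge of $\mathcal G$ realised as an edge of $H^{(k)}$. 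Enumerate the $k$-edges of $\mathcal G$ as $f_1,\dots,f_m$ and peel them off one at a time: I would show that, among the copies of $\mathcal G^{\le k-1}$ that already realise $f_1,\dots,f_{j-1}$ inside $H^{(k)}$, a $(d_k\pm\delta_k)$-fraction also realise $f_j$. Fix such a $j$; the image of $f_j$ must be a $K_k^{(k-1)}$ lying inside the polyad $\hat P^{(k-1)}$ spanned by the classes meeting $f_j$, and on this polyad $H^{(k)}$ is $(d_k,\delta_k,r)$-regular (the density there is $d_k$, not $0$, because $\mathcal H$ respects the partition of $\mathcal G$). The set $\mathcal Q$ of those $K_k^{(k-1)}$'s in $\hat P^{(k-1)}$ that extend to a copy of $\mathcal G^{\le k-1}$ realising $f_1,\dots,f_{j-1}$ is not itself a sub-hypergraph of $\hat P^{(k-1)}$, but one can write it as a union of boundedly many ($\le r$) such sub-hypergraphs and, by applying the counting lemma at uniformity $k-1$ to the relevant link complexes, check that this union meets at least a $\delta_k$-fraction of $\mathcal K_k(\hat P^{(k-1)})$; the $(d_k,\delta_k,r)$-regularity of $H^{(k)}$ then forces the fraction of $\mathcal Q$ lying in $H^{(k)}$ to be $d_k\pm\delta_k$. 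Iterating over $j=1,\dots,m$ and multiplying with the count of $\mathcal G^{\le k-1}$ yields the claimed bound.

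The main obstacle is precisely the step just sketched: representing the ``set of complete $(k-1)$-graphs that extend to an admissible partial copy'' in a form to which level-$k$ regularity can be applied, using only the $(k-1)$-level counting lemma as an inductive tool, and --- more delicately --- arranging that the required estimates hold for all but a negligible fraction of the partial structures rather than merely on average, so that the peeling (and, at uniformity $2$, the vertex-by-vertex embedding) actually closes up into a single clean induction. This forces the inductive statement to be strengthened so that it simultaneously outputs the count and a quasirandomness/regularity-inheritance property of the family of copies produced, and it is where the parameter $r$ and the full hierarchy $1/n_0\ll 1/r,\delta\ll\min\{\delta_k,d_2,\dots,d_{k-1}\}\ll\delta_k\ll d_k,1/s,1/t$ are genuinely used; the remainder of the argument is bookkeeping of the $(1\pm\varepsilon)$ errors through the two nested inductions.
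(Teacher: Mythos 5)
This lemma is imported verbatim from the literature: the paper cites it as \cite[Theorem~4]{CFKO} and gives no proof, so there is no in-paper argument to compare against. Evaluating your sketch on its own merits:

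Your high-level plan --- induction on the uniformity $k$, a graph counting lemma at $k=2$, counting copies of $\mathcal G^{\le k-1}$ in $\mathcal H^{\le k-1}$ by the inductive hypothesis, and then peeling the $k$-edges of $\mathcal G$ one at a time using $(d_k,\delta_k,r)$-regularity --- is the right shape, and your decision to carry a two-sided $(1\pm\varepsilon)$ estimate through the induction rather than the stated one-sided $\tfrac12$-bound is also correct (it is what Theorem~4 of \cite{CFKO} actually asserts). You have also correctly located where the difficulty lies.

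However, the pivotal step is wrong as you state it. You claim that the set $\mathcal Q$ of $K_k^{(k-1)}$'s in $\hat P^{(k-1)}$ extending to a copy of $\mathcal G^{\le k-1}$ realising $f_1,\dots,f_{j-1}$ ``can be written as a union of boundedly many ($\le r$) such sub-hypergraphs'' of $\hat P^{(k-1)}$, and that $(d_k,\delta_k,r)$-regularity can then be applied to that union. This does not type-check: $\mathcal Q$ is a subset of $\mathcal K_k(\hat P^{(k-1)})$, i.e.\ a set of $k$-sets, whereas the $r$-tuple $\mathbf Q$ in the definition of $(d_k,\delta_k,r)$-regularity is a tuple of $(k-1)$-uniform \emph{subgraphs} of $\hat P^{(k-1)}$, and what the regularity controls is the density of $H^{(k)}$ on $\mathcal K_k(\mathbf Q)=\bigcup_j\mathcal K_j(Q(j))$. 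There is no reason one can realise the (highly non-product) constraint ``extends to an admissible partial copy'' as $\mathcal K_k(\mathbf Q)$ for any bounded tuple $\mathbf Q$; indeed, if that were possible, the role of $r$ in the lemma would be essentially trivial. The actual proof (in \cite{CFKO}, and in the N.-R.-S.-S./R.-S. predecessors it builds on) does something more delicate: one fixes the embedding of $\mathcal G$ vertex by vertex, observes that the candidate sets one accumulates are genuine subgraphs of the relevant polyads \emph{for each fixed partial embedding}, and then the $r$-tuple regularity is invoked to control densities simultaneously over the family of these subgraphs ranging over all partial embeddings; this is paired with regularity-inheritance (``slicing''/``restriction'') results to propagate the estimates down a level. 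Your sketch names this gap but does not bridge it, so as written the proof would not go through.
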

\subsection{Statement of a restriction lemma.}
To prove Theorem~\ref{k-partite}, we need the following restriction lemma.
\begin{lemma}\label{Restriction-lem}{\rm(Restriction lemma {\cite[Lemma 4.1]{K2010Hamilton}}).}
Let $k, s, r, m$ be positive integers and $\gamma ,d_2, \dots ,$ $d_{k}, \delta , \delta _{k}$ be positive constants such that
\[
1/{m}\ll 1/r, \delta\leq \min\{\delta_k, d_2, \dots, d_{k-1} \}\leq \delta_k \ll \gamma \ll d_{k},1/s.
\]
Let $\mathcal{H}$ be a $(\mathbf{d},\delta_k,\delta, r)$-regular $(s,k)$-complex with vertex classes $V_1, \dots ,V_{s}$ of size $m$. For each $i$ let $V'_{i}\subseteq V_{i}$ be a set of size at least $\gamma m$. Then the restriction $\mathcal{H}' = \mathcal{H}[V'_1\cup \dots \cup V'_{s}]$ of $\mathcal{H}$ to $V'_1\cup \dots \cup V'_{s}$ is $(\mathbf{d},\sqrt{\delta _{k}},\sqrt{\delta },r)$-regular.
\end{lemma}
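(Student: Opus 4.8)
The plan is to verify, one uniformity level at a time, that the restricted complex $\mathcal{H}'=\mathcal{H}[V'_1\cup\cdots\cup V'_s]$ inherits regularity, the regularity constant degrading only from $\delta$ to $\sqrt{\delta}$ at levels $i\le k-1$ and from $\delta_k$ to $\sqrt{\delta_k}$ at the top level $k$. First I would process the levels $i=2,3,\dots,k$ in increasing order, so that when level $i$ is reached, levels $2,\dots,i-1$ of $\mathcal{H}'$ are already known to be regular. Fix an $i$-tuple $\Lambda_i$ of vertex classes and write ${H'}^{(i)}_s[\Lambda_i]$ for the restriction of $H^{(i)}_s[\Lambda_i]$ to $\bigcup_{\lambda\in\Lambda_i}V'_\lambda$. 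If $d\big(H^{(i)}_s[\Lambda_i]\,\big|\,H^{(i-1)}_s[\Lambda_i]\big)=0$ there is nothing to check, since $\mathcal{H}'$ is a subcomplex of $\mathcal{H}$ and hence the corresponding restricted density is also $0$; otherwise $H^{(i)}_s[\Lambda_i]$ is $(d_i,\delta)$-regular (or $(d_k,\delta_k,r)$-regular when $i=k$) w.r.t.\ $H^{(i-1)}_s[\Lambda_i]$, and this is the case to handle.

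The core of the argument is a ``lifting of witnesses''. Suppose for contradiction that ${H'}^{(i)}_s[\Lambda_i]$ fails to be $(d_i,\sqrt{\delta})$-regular (resp.\ $(d_k,\sqrt{\delta_k},r)$-regular) w.r.t.\ ${H'}^{(i-1)}_s[\Lambda_i]$. Then there is an $r$-tuple $\mathbf{Q}'$ of subhypergraphs of ${H'}^{(i-1)}_s[\Lambda_i]$ with $|\mathcal{K}_i(\mathbf{Q}')|\ge\sqrt{\delta}\,|\mathcal{K}_i({H'}^{(i-1)}_s[\Lambda_i])|$ (resp.\ with $\sqrt{\delta_k}$ in place of $\sqrt{\delta}$) that witnesses a density deviation exceeding $\sqrt{\delta}$ (resp.\ $\sqrt{\delta_k}$). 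Since $\mathcal{K}_i(\mathbf{Q}')$ lies entirely inside $V'_{\lambda_1}\times\cdots\times V'_{\lambda_i}$, the tuple $\mathbf{Q}'$ is at the same time an $r$-tuple of subhypergraphs of the \emph{unrestricted} polyad $H^{(i-1)}_s[\Lambda_i]$, and both $\mathcal{K}_i(\mathbf{Q}')$ and the induced density $d(\,\cdot\mid\mathbf{Q}')$ take the same value whether computed in $\mathcal{H}$ or in $\mathcal{H}'$. As $\sqrt{\delta}>\delta$ and $\sqrt{\delta_k}>\delta_k$, the tuple $\mathbf{Q}'$ would then contradict the assumed $(d_i,\delta)$- (resp.\ $(d_k,\delta_k,r)$-) regularity of $H^{(i)}_s[\Lambda_i]$ as soon as $|\mathcal{K}_i(\mathbf{Q}')|\ge\delta\,|\mathcal{K}_i(H^{(i-1)}_s[\Lambda_i])|$. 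Using $|\mathcal{K}_i(H^{(i-1)}_s[\Lambda_i])|\le m^i$ together with the hypothesis $\delta\le\delta_k$, it thus suffices to prove the single clique-count estimate $|\mathcal{K}_i({H'}^{(i-1)}_s[\Lambda_i])|\ge\sqrt{\delta_k}\,m^i$, which handles all levels $i\le k$ simultaneously.

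For that estimate, note that $|\mathcal{K}_i({H'}^{(i-1)}_s[\Lambda_i])|$ is exactly the number of partition-respecting copies of the complete $(i-1)$-complex on $i$ vertices inside $\bigcup_{j\le i-1}{H'}^{(j)}_s[\Lambda_i]$. The case $i=2$ is immediate, the count being $|V'_{\lambda_1}|\,|V'_{\lambda_2}|\ge(\gamma m)^2$. For $i\ge3$, by the levels already processed this $(i-1)$-complex is $(\mathbf{d},\sqrt{\delta},\sqrt{\delta},1)$-regular, so the Counting Lemma applied in uniformity $i-1$ (in its $r=1$, single-parameter form for regular complexes, which only asks the regularity constant to be small against $\mathbf{d}$) gives $|\mathcal{K}_i({H'}^{(i-1)}_s[\Lambda_i])|\ge\tfrac12\,(\gamma m)^i\prod_{j=2}^{i-1}d_j^{\binom{i}{j}}$, a fixed positive multiple of $m^i$; since $\delta_k$ is tiny in the hierarchy this exceeds $\sqrt{\delta_k}\,m^i$, closing the induction. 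Taking $\mathbf{Q}'$ to be the full restricted polyad in the same comparison also shows that each restricted density stays within $\delta$ of the original $d_i$, so no positive-density level degenerates, and altogether $\mathcal{H}'$ is a $(d_2,\dots,d_k,\sqrt{\delta_k},\sqrt{\delta},r)$-regular complex.

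The hard part will be this clique-counting step, together with the discipline of organizing everything as an induction on the uniformity so that at level $i$ the Counting Lemma is invoked for $\mathcal{H}'$ using only the already-established regularity of its levels below $i$. Verifying that the degraded constants $\sqrt{\delta}$ and $\sqrt{\delta_k}$ still satisfy the hypotheses of whichever counting lemma one uses, and handling the minor nuisance that $\mathcal{H}'$ has vertex classes of possibly unequal (but linear-sized) parts, are the routine-but-careful parts.
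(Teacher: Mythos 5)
The paper does not contain its own proof of this lemma: it is quoted verbatim as \cite[Lemma~4.1]{K2010Hamilton} and used as a black box, so there is no in‑paper argument to compare against. Your reconstruction nevertheless follows the standard route for such restriction lemmas, and it is essentially the right one: process the levels $i=2,\dots,k$ in increasing order; observe that a witness $\mathbf{Q}'$ to irregularity of the restriction is also a subhypergraph of the unrestricted polyad with the same clique set and the same induced density (since all its vertices lie in $\bigcup V'_{\lambda}$); and reduce the whole matter to showing that the restricted $(i-1)$-polyad still supports a positive proportion of $K_i^{(i-1)}$'s, which is where the counting lemma for lower uniformities enters. That scaffold is correct.

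The soft spot is exactly the clique‑count step, and the particular way you normalise it gives away a bit too much. You replace the denominator $|\mathcal{K}_i(H^{(i-1)}_s[\Lambda_i])|$ by the crude upper bound $m^i$ and then aim for the \emph{absolute} estimate $|\mathcal{K}_i({H'}^{(i-1)}_s[\Lambda_i])|\ge\sqrt{\delta_k}\,m^i$. Via the counting lemma this becomes $\tfrac12\gamma^i\prod_{j=2}^{i-1}d_j^{\binom{i}{j}}\ge\sqrt{\delta_k}$, which implicitly requires the product $\prod d_j^{\binom{i}{j}}$ to dominate $\sqrt{\delta_k}$; the stated hierarchy only guarantees $\delta\le\min\{\delta_k,d_2,\dots,d_{k-1}\}$ and does not by itself prevent the $d_j$'s from being as small as $\delta_k$. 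The robust version is to compare clique counts \emph{relatively}: the counting lemma (for the complex and its restriction alike, with complementary upper estimates) gives
$|\mathcal{K}_i({H'}^{(i-1)}_s[\Lambda_i])| \ge c\gamma^i\,|\mathcal{K}_i(H^{(i-1)}_s[\Lambda_i])|$
for a constant $c$ close to $1$, so that the densities $d_j$ cancel; then $\gamma^i\ge\sqrt{\delta_k}$ needs only $\delta_k\ll\gamma$, which is exactly in the hierarchy. The two remaining points you flag as "routine" do need genuine care rather than a wave: you must truncate the $V'_\lambda$ to a common size before invoking Lemma~\ref{Counting-lem} (it is stated for equal parts) and note that enlarging back to the true $V'_\lambda$ only increases the clique count; and you must check that $\sqrt{\delta}$ is still small enough against $d_2,\dots,d_{i-1}$ to apply the counting lemma at level $i-1$ during the induction. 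For $i<k$ the witness is a single $Q'$ rather than an $r$-tuple, a minor slip of notation. With those repairs the argument goes through.
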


\subsection{The proof of Lemma~\ref{supersaturation}.}\label{section} In this subsection, we prove Lemma~\ref{supersaturation} using the hypergraph regularity method.
In order to prove Lemma~\ref{supersaturation}, we shall construct an auxiliary $k$-graph as follows. Given a $k$-graph $H$ and $w\in V(H)$, let $H_w$ be the $k$-graph obtained from $H$ by duplicating $w$ $n$ times and denote the set of the $n$ copies of $w$ by $V_w$ (namely, 
we add to $H$ $n$ new vertices so that each of them has exactly the same link $(k-1)$-graph as $w$). The auxiliary hypergraph $H_w$ will play a key role in showing that $F\in
\cover_k$.
We shall apply Theorem \ref{Reg-lem} to $H_w$ with the initial partition $V(H)\cup V_w$. Then we apply Lemma~\ref{Counting-lem} to a subhypergraph with good properties in $H_w$, which helps us to prove Lemma~\ref{supersaturation}. 


\begin{proof}[Proof of Lemma~\ref{supersaturation}]
Given $p,\alpha>0$, let $\alpha'=\min\{\frac{p^2}{2(k-1)!}, \frac{\alpha}{8}\}$. Let $H$ be an $(n,p,\mu,\alpha)$ $k$-graph and set $V:=V(H)$.
Our goal is to apply the definition of $\cover_k$ in certain sub-hypergraph of $H$, i.e., taking $\mu>0$ small enough and $n$ large enough so that every vertex of a $((1-\alpha')n,p,\sqrt{\mu},\alpha')$ $k$-graph $H'$ is contained in a copy of $F$.
To prove this, we use the hypergraph regularity lemma. Now we introduce new constants satisfying the following hierarchy:
\[
\frac{1}{n_0}\ll \frac{1}{r}=\frac{1}{r(t)}, \delta=\delta(t)\ll \min\{\delta_k, d_2, \dots, d_{k-1},1/t\}\ll\delta_k, \eta\ll d\ll \mu\ll \alpha', 2^{-k}, 1/f
\]
where $d_i \geq 1/t$ and $1/d_i \in \mathbb{N}$ for all $i = 2, \dots , k-1$.
Recall that the hypergraph regularity lemma is proved by iterated refinements starting with an arbitrary initial partition. Hence, for a given vertex $w \in V$, we apply Theorem \ref{Reg-lem} to $H_w$ with the initial partition $V\cup V_w$. Then we obtain a family of partitions $\mathcal{P}=\{\mathcal {P}^{(1)},\dots, \mathcal {P}^{(k-1)}\}$ of $V\cup V_w$ and $H_w$ is $(\delta_k, r)$-regular w.r.t.~$\mathcal{P}$, where $|\mathcal {P}^{(1)}|=2a_1$ and $1/\eta \leq a_1 \leq t$. We may assume that $t!$ divides $n$ by discarding up to $t!$ vertices if necessary.

We next delete from $H_w$ all edges which are not $d$-useful. By Lemma~\ref{useful-edge}, this results the removal of at most $2d(2n)^k$ edges. By averaging, there exists a cluster $W\subseteq V_w$ in $\mathcal {P}^{(1)}$ such that at most $2d(2n)^k/{a_1}\le 2\mu n^k$ edges are removed from the induced sub-hypergraph $H_w[V\cup W]$. Let $H'_w$ be the resulting $k$-graph after deleting those useless edges from $H_w[V\cup W]$. Since $\mu\ll \alpha'<\alpha$ and $\sum_{w'\in W}\deg_{H_w}(w')\ge (\alpha/{a_1}) n^{k}$, there is a vertex $w^*\in W$ such that $\deg_{H'_w}(w^*)\ge (\alpha/2)n^{k-1}$.
Set $H':= H'_w[V\setminus\{w\}\cup \{w^*\}]$. It follows that $\deg_{H'}(w^*)=\deg_{H'_w}(w^*)\ge (\alpha/2)n^{k-1}$ since $N_{H_w'}(w^*)\subseteq V\setminus \{w\}$.
Note that $H'$ is $(p,3\mu)$-dense, because $H$ is $(p,\mu)$-dense and $|E(H)|-|E(H')|\le 2\mu n^k$. Similarly, let $X$ be the set of vertices in $H'$ of degree less than $2\alpha' n^{k-1}$.
By the quasi-randomness, we have
\[
(k-1)! |X| 2\alpha'n^{k-1}\ge e_{H'}(X, V,\dots, V) \ge p|X|n^{k-1} - 3\mu n^k,
\]
which implies that $|X|\le \sqrt{\mu} n\le \alpha' n$ by the choice of $\alpha'$.
Let $H''$ be the induced sub-hypergraph of $H'$ of size exactly $(1-\alpha')n$ on a subset of $V\setminus X$. Then $w^*$ is still in $H''$ as $w^*\notin X$.
Since deleting $\alpha' n$ vertices will reduce the degree of each  remaining vertex by at most $\alpha' n^{k-1}$, we conclude that $H''$ is a $((1-\alpha')n,p,\sqrt{\mu},\alpha')$ $k$-graph by the choice of $\alpha'$.
By the definition of $\cover_k$, $H''$ contains a copy $F^*$ of $F$ covering $w^*$. Furthermore, due to our construction of $H''$, each edge of $F^*$ is $d$-useful. Without loss of generality, suppose that the vertices of $F^*$ lie in $s$ clusters, where $1<s\le f$.

Let $\mathcal{F}^{*\le}$ be an $(s,k)$-complex turned from $F^*$ by making every edge into a complete $i$-graph $K^{(i)}_k$ for each $1\le i\le k$.
Next, we define $\mathcal{H}^*$ to be the $(s,k)$-complex obtained from the $(s,k-1)$-complex $\bigcup_{e\in E(F^*)} \bigcup_{i=1}^{k-1}
\hat{P}^{(i)}(e)$ by adding $E(H'_w)\cap \bigcup_{e\in E(F^*)}\mathcal{K}(\hat{P}^{(k-1)}(e))$ as the ``$k$th level".
Since every edge of $H'_w$ is $d$-useful, we know that for each $e\in E(F^*)$, $H'_w$ is $(d_k,\delta_k,r )$-regular w.r.t. $\hat{P}^{(k-1)}(e)$ for some $d_k\ge d$. Note that these polyads ``fit together''. By this we mean that if edges $e$ and $e'$ of $F^*$ intersect in $j$ vertices, then
\[(\bigcup\limits_{i=1}^{k-1} \hat{P}^{(i)}(e) )\cap(\bigcup\limits_{i=1}^{k-1} \hat{P}^{(i)}(e') ) = \bigcup\limits_{i=1}^{j} \hat{P}^{(i)}(e\cap e').\]
Then $\mathcal{H}^*$ is a $(\mathbf{d}, d_k, \delta_k, \delta, r)$-regular $(s,k)$-complex, where $\mathbf{d}=(d_2, \dots , d_{k-1})$ is as in the definition of an $(\eta, \delta, t)$-equitable partition. (Here we may assume a common density $d_k$ for the $k$th level by applying the slicing lemma (\cite{regular-lemmas2007}, Proposition 22) if necessary.) Furthermore, $\mathcal{H}^*$ respects the partition of the complex $\mathcal{F}^{*\le}$.
 Thus we apply Lemma~\ref{Counting-lem} on $\mathcal{F}$ and $\mathcal{H}^*$ and obtain
\[
|\mathcal{F}^{*\le}|_{\mathcal{H}^*}\ge \frac{1}{2}\left(\frac{n}{a_1}\right)^f \prod \limits_{i=2}^k d^{e_i(\mathcal{F}^{*\le})}_i=\frac{\eta'}{2a_1}n^f, \ \text{where} \ \eta'=a_1^{1-f}\prod \limits_{i=2}^k d^{e_i(\mathcal{F}^{*\le})}_i.
\]
Furthermore, as at most $n^{f-1}$ copies of $F$ may contain $w$, there are at least $\frac{\eta'}{3a_1}n^f$ copies of $F$ in $H'_w$ which do not contain $w$.
Since every vertex in cluster $W$ is a clone of $w$ and $|W|=n/{a_1}$, there exists $w'\in W$ contained in at least $\eta n^{f-1}$ copies of $F$ that do not contain $w$, where $\eta:= \eta'/3$. As in each of these copies of $F$, $w'$ can be replaced by $w$, we are done.
\end{proof}

\subsection{The proof of the backward implication of Theorem~\ref{k-partite}} In this subsection we prove the backward implication of Theorem~\ref{k-partite} using Theorem~\ref{k-graph}, the regularity lemma, the counting lemma and the restriction lemma. The forward implication of Theorem~\ref{k-partite} will be proved in Section 5. We just state the statement of the backward implication of Theorem~\ref{k-partite} for convenience.

\medskip
\noindent\textbf{The backward implication of Theorem 1.4.}
\emph{For $k \geq 3$ and a $k$-partite $k$-graph $F$, if there exists $v^*\in V(F)$ such that $|e\cap e'|\leq 1$ for any two edges $e,e'$ with $v^*\in e$ and $v^*\notin e'$, then $F\in \factor_k$.}
\medskip

We first give an outline of our proof.
Given a vertex $w$ in the $(n,p,\mu ,\alpha )$ $k$-graph $H$, we need to find a copy of $F$ that maps $v^*$ to $w$.
We also use the $k$-graph $H_w$ as in the previous proof, where the vertex $w$ is duplicated $n$ times.
After applying the regularity lemma to $H_w$, our goal is to define an appropriate regular (and dense) complex which would allow us to find the desired embedding of $F$ by applying the counting lemma.
To achieve this, we first pick a regular (and dense) $(k,k)$-complex $\mathcal{H}^*_1$ with clusters $V_0, V_1, \dots, V_{k-1}$ where $V_0$ is a cluster all of whose vertices are clones of $w$.
Next by the $(p,\mu)$-denseness, pick an arbitrary cluster $V_k$ and we can pick a regular (and dense) $(k,k)$-complex $\mathcal{H}^*_2$ with clusters $V_1, \dots, V_{k}$.
We further cut these clusters into smaller pieces $V_i^j$, $0\le i\le k$, $1\le j\le f$ and map each vertex of $F$ to a small piece, according to the $k$-partition of $F$ while mapping $v^*$ to $V_0^1$.
The fact that both $\mathcal{H}^*_1$  and $\mathcal{H}^*_2$ are regular and dense passes to the combinations of the smaller pieces by the restriction lemma.
The key property of $F$ guarantees that we can choose a regular and dense $(v(F), k)$-complex so that the parts coming from $\{V_0,\dots, V_{k-1}\}$ and $\{V_1, \dots, V_{k}\}$ can be safely combined together without conflict (e.g., there would not be clusters $V_i^j$, $V_{i'}^{j'}$ and edges $e$ containing $v^*$, $e'$ not containing $v^*$ such that $e$ and $e'$ require different cells from $(V_i^j, V_{i'}^{j'})$).

\begin{proof}
We prove the backward implication of Theorem~\ref{k-partite} here. Suppose that $F$ is a $k$-partite $k$-graph with vertex classes $X_1,\dots ,X_k$ and satisfies the property stated in Theorem~\ref{k-partite}. Let $|X_i|=f_i$ for $i\in [k]$ and $f = \max_{1\le i\le k} f_i$. For convenience, we label $V(F)$ and denote the $j$th vertex in $X_i$ by $v_{i}^{j}$ for $1\leq i \leq k,1\leq j \leq f_i$. Without loss of generality, we may assume that $v^*$ is labeled by $v_1^1$. By Theorem~\ref{k-graph}, we only need to show that $F\in\cover_k$. In other words, our goal is to show that given $p,\alpha >0$, we take $\mu >0$ small enough and $n$ large enough such that every vertex $w$ in an $(n,p,\mu ,\alpha )$ $k$-graph $H$ is contained in a copy of $F$. Choose constants satisfying the following hierarchy:
\[
1/n_0 \ll 1/r, \delta \ll \min \{\delta_k, d_2, \dots, d_{k-1} \} \ll \delta _{k} \ll \gamma \ll d \ll \mu \ll p, \alpha ,1/f
\] where $\mathbf{d}= (d_2, \dots , d_{k-1})$ such that $d_i \geq 1/t$ and $1/d_i \in \mathbb{N}$ for all $i = 2, \dots , k-1$. As in the proof of Lemma~\ref{supersaturation}, for a given vertex $w\in V(H)$, we apply Theorem \ref{Reg-lem} to $H_w$ and obtain a family of partitions $\mathcal{P}=\{\mathcal {P}^{(1)},\dots, \mathcal {P}^{(k-1)}\}$ of $V\cup V_w$ such that $H_w$ is $(\delta_k, r)$-regular w.r.t. $\mathcal{P}$, where $|\mathcal{P}^{(1)}|=2a_1$ and $1/ \eta \leq a_1 \leq t$. We may assume that $t!$ divides $n$.

Furthermore, by the proof of Lemma~\ref{supersaturation}, we can find a cluster $V_0 \subseteq V_{w}$ such that  there is a $d$-useful edge $e_1$ in $H_w[V\cup V_0]$ with $|e_1\cap V_0| = 1$.
Recalling the definition of $d$-useful edges, this means that there is a polyad $\hat{P}^{(k-1)}(e_1) \in \hat{\mathcal{P}}^{(k-1)}$ such that $H_w$ is $(d'_k, \delta_k, r)$-regular w.r.t $\hat{P}^{(k-1)}(e_1)$ for some $d'_k\geq d$. We assume that $\hat{P}^{(k-1)}(e_1)$ lies in $k$ clusters written as $V_0, V_1, \dots ,V_{k-1}$. Choose an arbitrary cluster from $V\setminus \bigcup_{i=1}^{k-1}V_i$ written as $V_{k}$. As $H$ is an $(n,p,\mu ,\alpha )$ $k$-graph, we have $e(V_1, V_2, \dots ,V_{k})\geq p(\frac{n}{a_1})^{k} - \mu n^{k}$. Therefore, there are at least $p(\frac{n}{a_1})^{k} - 3\mu n^{k}>0$ useful edges in $E(V_1, V_2, \dots ,V_{k})$,
which implies that there is a polyad $\hat{P}^{(k-1)}(e_2) \in \hat{\mathcal{P}}^{(k-1)}$ such that $H_w$ is $(d''_k, \delta_k, r)$-regular w.r.t $\hat{P}^{(k-1)}(e_2)$ for some $d''_k\geq d$.

Now we begin to embed $F$. Our aim is to embed $v^*$ into $V_0$, $X_1\setminus \{v^*\}$ into $V_k$ and $X_i$ into $V_{i-1}$ for $2\leq i\leq k$ via Lemma~\ref{Counting-lem}. For $\ell=1,2$, we define $\mathcal{H}^{*}_\ell$ to be the $(k,k)$-complex obtained from the $(k,k-1)$-complex $\bigcup _{i=1}^{k-1}\hat P^{(i)}(e_\ell)$ by adding $E(H_w)\cap \mathcal{K}(\hat P^{(k-1)}(e_\ell))$ as the  ``$k$th level". 
Then $\mathcal{H}^{*}_\ell$ is a $(\mathbf{d},d_k,\delta _k,\delta ,r)$-regular $(k,k)$-complex, where $\mathbf{d}=(d_2, \dots , d_{k-1})$ is as in the definition of an $(\eta, \delta, t)$-equitable partition. (Here we may assume a common density $d_k$ for the $k$th level by applying the Slicing  Lemma (\cite{regular-lemmas2007}, Proposition 22) if necessary.) 
Now we cut each $V_0, V_1, \dots ,V_{k}$ into $f$ pieces equitably and denote each piece by $V^j_i$ where $1 \leq j \leq f$ for each $0\leq i \leq k$. By Lemma~\ref{Restriction-lem}, the restriction $\mathcal{H}^{*}_1[V^{j_0}_0, V^{j_1}_1, \dots ,V^{j_{k-1}}_{k-1}]$ of $\mathcal{H}^{*}_1$ is $(\mathbf{d}, d_k,\sqrt{\delta _k},\sqrt{\delta} ,r)$-regular for all $1\leq j_0,\dots ,j_{k-1}\leq f$. (Since $\gamma \ll 1/f$, $|V^{j_i}_i|\geq |V_i|/f\geq \gamma |V_i|$ holds for all $0\leq i\leq k-1$.)
Similarly we can apply Lemma~\ref{Restriction-lem} to $\mathcal{H}^{*}_2$ as well. 
We plan to embed $v^*$ into $V^1_0$, $v_1^{j}$ into $V^{j}_k$ for $2\leq j\leq f_1$ and $v_{i}^{j}$ into $V^{j}_{i-1}$ for $2\leq i \leq k,1\leq j \leq f_i$. Let $\mathcal{F}^{\leq}$ be a $k$-complex obtained from $F$ by making every edge a complete $i$-graph $K^{(i)}_k$ for each $1\leq i\leq k$. If $e = \{v^*,v_2^{j_2}, \dots ,v_k^{j_k}\}\in E(F)$ for $2\le i \le k$ and $1\leq j_{i} \leq f_{i}$,
then we choose the corresponding vertex subset $V_0^1,V_{i-1}^{j_{i}}\subseteq V(\mathcal{H}^{*}_1)$. 
By Lemma~\ref{Restriction-lem} and $d_k>0$, there is an edge $\hat e\in H_w[V_0^1,V_1^{j_2},\dots ,V_{k-1}^{j_k}]$. 
Let $\mathcal{C}(e):=\bigcup _{i=1}^{k-1}\hat P^{(i)}_{res}(\hat e)$ be the restriction of $\bigcup _{i=1}^{k-1}\hat P^{(i)}(\hat e)$ to $V_0^1$ and $V_{\ell-1}^{j_{\ell}}$. If $e = \{v_1^{j_1},v_2^{j_2}, \dots ,v_k^{j_k}\}\in E(F)$ for $2\leq j_1 \leq f_1$ and $1\leq j_{i} \leq f_{i}$,
 then we choose the corresponding vertex subsets $V_k^{j_1},V_1^{j_2},\dots ,V_{k-1}^{j_k}\subseteq V(\mathcal{H}^{*}_2)$.
 Similarly, there is an edge $\hat e\in H_w[V_k^{j_1},V_1^{j_2},\dots ,V_{k-1}^{j_k}]$ and we define $\mathcal{C}(e):=\bigcup _{i=1}^{k-1}\hat P^{(i)}_{res}(\hat e)$ as the restriction of $\bigcup _{i=1}^{k-1}\hat P^{(i)}(\hat e)$.
The key point here is that for any two edges $e,e'$ with $v^*\in e$ and $v^*\notin e'$, $|e\cap e'|\leq 1$, from which we infer that $\mathcal C(e)$ and $\mathcal C(e')$ do not each contain a cell in a pair of clusters, say, $V_{i_1}^{j_1}$ and $V_{i_2}^{j_2}$ (see Figure~1). Then let $\mathcal{H}^{*}$ be the $(v(F),k)$-complex obtained from the $(v(F),k-1)$-complex $\bigcup \limits _{e\in F}\mathcal{C}(e)$ by adding the edges it supports in $H_w$
 as the ``$k$th level", and it is easy to verify that $\mathcal{H}^{*}$ is a $(\mathbf{d},d_k,\sqrt{\delta _k},\sqrt{\delta },r)$-regular $(v(F),k)$-complex.
 Due to the construction of $\mathcal{H}^{*}$ and Lemma~\ref{Restriction-lem}, $\mathcal{H}^{*}$ respects the partition of the complex $\mathcal{F}^{\leq}$. By Lemma~\ref{Counting-lem}, we obtain a copy of $F$ containing $w$ and we are done.
\end{proof}


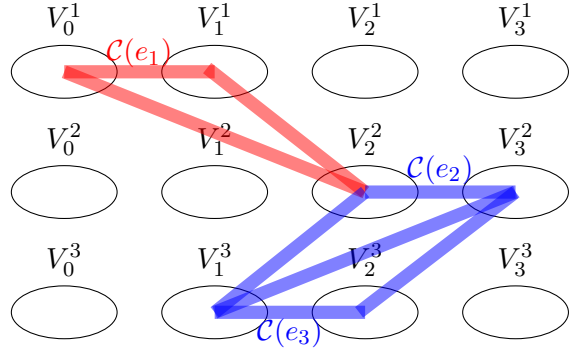
\begin{figure}\label{figure-1}
\begin{center}
\begin{tikzpicture}
[inner sep=2pt,
   vertex/.style={circle, draw=blue!50, fill=blue!50},
   ]
\draw (0,3.2) ellipse (20pt and 10pt);
\draw (0,1.6) ellipse (20pt and 10pt);
\draw (0,0) ellipse (20pt and 10pt);
\draw (2,3.2) ellipse (20pt and 10pt);
\draw (2,1.6) ellipse (20pt and 10pt);
\draw (2,0) ellipse (20pt and 10pt);
\draw (4,3.2) ellipse (20pt and 10pt);
\draw (4,1.6) ellipse (20pt and 10pt);
\draw (4,0) ellipse (20pt and 10pt);
\draw (6,3.2) ellipse (20pt and 10pt);
\draw (6,1.6) ellipse (20pt and 10pt);
\draw (6,0) ellipse (20pt and 10pt);
\node at (0,3.9) {$V_0^1$};
\node at (0,2.3) {$V_0^2$};
\node at (0,0.7) {$V_0^3$};
\node at (2,3.9) {$V_1^1$};
\node at (2,2.3) {$V_1^2$};
\node at (2,0.7) {$V_1^3$};
\node at (4,3.9) {$V_2^1$};
\node at (4,2.3) {$V_2^2$};
\node at (4,0.7) {$V_2^3$};
\node at (6,3.9) {$V_3^1$};
\node at (6,2.3) {$V_3^2$};
\node at (6,0.7) {$V_3^3$};
\draw[line width=5pt,red,opacity=0.5](0,3.2) -- (2,3.2);
\draw[line width=5pt,red,opacity=0.5](1.9,3.25) -- (4,1.6);
\draw[line width=5pt,red,opacity=0.5](0,3.2) -- (4,1.6);
\draw[line width=5pt,blue,opacity=0.5](2,0) -- (4,0);
\draw[line width=5pt,blue,opacity=0.5](3.92,0) -- (6,1.6);
\draw[line width=5pt,blue,opacity=0.5](4,1.6) -- (6,1.6);
\draw[line width=5pt,blue,opacity=0.5](2,0) -- (4,1.6);
\draw[line width=5pt,blue,opacity=0.5](2,0) -- (6,1.6);

\node[red] at (1,3.45) {$\mathcal{C}(e_1)$};
\node[blue] at (5,1.9) {$\mathcal{C}(e_2)$};
\node[blue] at (3,-0.27) {$\mathcal{C}(e_3)$};
%
\end{tikzpicture}

\caption{An illustration of the proof of Theorem~\ref{k-partite} for the case $k=3$ and $f=3$.}
\end{center}
\end{figure}

\section{Proof of the backward implication of Theorem \ref{equal}}
Let $F$ be a $3$-graph with $v(F):=f$ satisfying conditions {\rm (i)} and {\rm (ii)} in Theorem \ref{equal}. Further, we assume that  $\mathcal{P}=\{X,Y,\{v^*\}\}$ is a certain partition of $V(F)$ satisfying condition {\rm (ii)}.
By Theorem~\ref{k-graph}, it suffices to show that $F\in \cover_3$.
In this section, we prove $F\in \cover_3$ using the regularity lemma for $3$-graphs.

 Let $\{v_1,v_2,\dots,v_f\}$ be an enumeration of $V(F)$ satisfying condition {\rm (b)} in Theorem \ref{0}. Then it admits the property stated as follows.
\begin{observation}\label{3-coloring}
For any $v\in V(F)$ and $1\leq i<j<k\leq f$,  we have $\{v,v_i,v_j\} \notin E(F)$ or $\{v,v_j,v_k\} \notin E(F)$.
\end{observation}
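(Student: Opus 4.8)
The plan is to deduce Observation~\ref{3-coloring} directly from the rigid structure guaranteed by Theorem~\ref{0}(b). Fix the enumeration $v_1,\dots,v_f$ together with the $3$-colouring $\varphi\colon\partial F\to\{\text{red},\text{blue},\text{green}\}$; then for every edge $\{v_a,v_b,v_c\}$ with $a<b<c$ we have $\varphi(v_a,v_b)=\text{red}$, $\varphi(v_a,v_c)=\text{blue}$ and $\varphi(v_b,v_c)=\text{green}$ --- in words, inside a sorted edge the pair of the two smallest vertices is red, the ``outer'' pair is blue, and the pair of the two largest vertices is green. I would argue by contradiction, so suppose some $v\in V(F)$ and indices $i<j<k$ satisfy $\{v,v_i,v_j\}\in E(F)$ and $\{v,v_j,v_k\}\in E(F)$. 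If $v\in\{v_i,v_j,v_k\}$ then one of these sets has size at most $2$ and hence is not an edge of the $3$-graph $F$, so the assertion is vacuous; thus we may assume $v=v_\ell$ with $\ell\notin\{i,j,k\}$.

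The two edges share exactly the pair $\{v,v_j\}$, and the heart of the argument is that they impose incompatible colours on it. First I would determine $\varphi(v,v_j)$ from the edge $e_1=\{v,v_i,v_j\}$: since $v_i<v_j$, the vertex $v_j$ is the middle or the maximum of $e_1$, and checking the three positions of $v$ (below $v_i$, between $v_i$ and $v_j$, above $v_j$) shows that $\varphi(v,v_j)$ is blue when $v<v_i$ and green otherwise. Then I would do the same for $e_2=\{v,v_j,v_k\}$: since $v_j<v_k$, the vertex $v_j$ is the minimum or the middle of $e_2$, and the three positions of $v$ give that $\varphi(v,v_j)$ is red when $v<v_k$ and blue when $v>v_k$. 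Comparing the two verdicts finishes the proof: if $v<v_i$ then $e_1$ says blue while $e_2$ says red (as $v<v_i<v_k$); if $v_i<v<v_k$ then $e_1$ says green while $e_2$ says red; and if $v>v_k$ then $e_1$ says green (as $v>v_k>v_i$) while $e_2$ says blue. In each case $\varphi(v,v_j)$ is forced to take two distinct values, contradicting that $\varphi$ is a well-defined function on $\partial F$.

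I do not expect a genuine obstacle here: the only content is that a pair contained in two edges must receive colours consistent with both induced orderings, and all that is required is careful bookkeeping of the (at most) three relative positions of $v$ with respect to $v_i,v_j,v_k$, together with the trivial degenerate case. The main thing to get right in the write-up is to state the ``red/blue/green $=$ two-smallest/outer/two-largest'' rule once and then let the three-way case split run cleanly, rather than re-deriving the colours edge by edge.
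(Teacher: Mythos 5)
Your proof is correct and is essentially the paper's argument: both deduce a contradiction by showing the two edges force the shared pair $\{v,v_j\}$ to receive two different colours under $\varphi$. The paper splits on $v<v_j$ versus $v>v_j$ (obtaining red from one edge and blue/green from the other, or green from one and red/blue from the other), while you split on the position of $v$ relative to $v_i$ and $v_k$; these case splits are interchangeable, and your explicit handling of the degenerate case $v\in\{v_i,v_j,v_k\}$ is a minor but sound addition.
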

\begin{proof}
Suppose that both $\{v,v_i,v_j\} \in E(F)$ and $\{v,v_j,v_k\} \in E(F)$ and $v=v_r$ ($r\neq j$). If $r<j$, then $\{v_r,v_j,v_k\}\in E(F)$ implies that $\varphi(v_r,v_j)={\color{red} red}$ while $\{v_r,v_i,v_j\}\in E(F)$ implies that $\varphi(v_r,v_j)={\color{blue} blue}~or~{\color{green}green}$. If $r>j$, then similarly $\{v_r,v_j,v_k\}\in E(F)$ implies that $\varphi(v_r,v_j)={\color{red} red}~or~{\color{blue}blue}$ while $\{v_r,v_i,v_j\}\in E(F)$ implies that $\varphi(v_r,v_j)={\color{green} green}$. Therefore, either $\{v,v_i,v_j\} \notin E(F)$ or $\{v,v_j,v_k\} \notin E(F)$.
\end{proof}

Let $F_{v^*}\subseteq F$ be the $3$-graph consisting of all $3$-edges containing $v^*$. For any integer $0\leq i\leq3$, let $F_i\subseteq F\setminus\{v^*\}$ be the $3$-graph consisting of all $3$-edges $e$ with $|e\cap X|=i$. Then an easy observation is that $\partial F_{v^*}$, $\partial F_0$, $\partial F_1$, $\partial F_2$ and $\partial F_3$ are pairwise disjoint. In the following lemma, we show that $\partial F$ actually admits a more structural $3$-coloring w.r.t. the partition $\mathcal{P}$.

\begin{lemma}\label{alter} There is an enumeration $\{v_1,v_2,\cdots,v_f\}$ of $V(F)$ with $v_1=v^*$, $\{v_2,\cdots,v_{|X|+1}\}=X$ and $\{v_{|X|+2},\cdots,v_f\}=Y$ satisfying {\rm (b)} in Theorem \ref{0}.

\end{lemma}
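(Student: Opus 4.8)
We are given a $3$-graph $F$ satisfying conditions (i) and (ii) of Theorem~\ref{equal}, together with a fixed partition $\mathcal{P}=\{X,Y,\{v^*\}\}$ of $V(F)$ as in (ii). By (i) and Theorem~\ref{0}, there exists \emph{some} enumeration of $V(F)$ and a $3$-colouring $\varphi$ of $\partial F$ witnessing property (b); the goal of Lemma~\ref{alter} is to show we may choose the enumeration so that it respects $\mathcal{P}$, i.e. $v^*$ comes first, then all of $X$, then all of $Y$. The key structural input is condition (ii): $N(v^*)\subseteq X\times Y$, and for $x\in X$, $y\in Y$ the neighbourhoods $N(x)$, $N(y)$, $N(v^*)$ are pairwise disjoint. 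In particular the pairs $\{v^*,x\}$ ($x\in X$) appear only in edges through $v^*$, the pairs $\{v^*,y\}$ never appear in any edge at all, and the pairs inside $X$, inside $Y$, and across $X$–$Y$ that are used lie in edges \emph{not} through $v^*$; so $\partial F_{v^*}$, $\partial F_0$, $\partial F_1$, $\partial F_2$, $\partial F_3$ are pairwise disjoint (as already noted before the lemma).

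\textbf{Step 1: extract a linear order from $\varphi$.} Starting from the given enumeration and colouring $\varphi$ with property (b), I would record, for each edge $\{v_i,v_j,v_k\}$ with $i<j<k$, that $\varphi(v_i,v_j)=\mathrm{red}$, $\varphi(v_i,v_k)=\mathrm{blue}$, $\varphi(v_j,v_k)=\mathrm{green}$. The plan is to show that the colouring already forces, on each of the five "blocks" $\partial F_{v^*},\partial F_0,\dots,\partial F_3$, a consistent local orientation, and then to splice these together into one global order refining $v^*<X<Y$. Concretely, reorder so that $v_1=v^*$; this is harmless for edges through $v^*$ since $v^*$ is smallest there, giving $\varphi(v^*,x)=\mathrm{red}$ and $\varphi(x,y)$ constant (say green) on edges $\{v^*,x,y\}$. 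Because $N(v^*)\subseteq X\times Y$ and these pairs $\{x,y\}$ occur nowhere else (disjointness of shadows), recolouring or reordering $X$ and $Y$ among themselves cannot clash with the $v^*$-edges as long as every $x\in X$ precedes every $y\in Y$.

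\textbf{Step 2: handle the edges avoiding $v^*$.} These are exactly the edges of $F_0\cup F_1\cup F_2\cup F_3$, partitioned by $|e\cap X|$. For each $i$, an edge of $F_i$ contributes a colour constraint on its pairs; since $\partial F_0,\dots,\partial F_3$ are pairwise disjoint, I can treat the colouring on each $F_i$ independently. Within $F_0$ (all three vertices in $Y$) and $F_3$ (all three in $X$) the original colouring already gives a valid (b)-witnessing order on $X$ and on $Y$ separately — I just keep the induced order. For $F_1$ (one vertex in $X$, two in $Y$) and $F_2$ (two in $X$, one in $Y$) I need the order to put the $X$-vertices before the $Y$-vertices; the point is that requiring $X<Y$ only \emph{adds} constraints on which vertex is "smallest/middle/largest", and because the shadows $\partial F_1,\partial F_2$ are disjoint from each other and from $\partial F_0,\partial F_3$, I am free to permute the colour classes (swap the names red/blue/green) separately on the pairs used by each $F_i$ to make the forced order compatible with $X<Y$. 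Verifying that, after fixing an order on $X$ and on $Y$ and declaring $X<Y$, one can \emph{consistently} recolour so that every edge of every $F_i$ still satisfies the red–blue–green pattern is the crux; here one uses that each $F_i$ is, by construction, a $3$-partite-type $3$-graph relative to the $(X,Y)$-split (at most two vertices of an edge lie on the same side), so the "small/middle/large" roles are determined by the side pattern and one checks the three cases $i\in\{1,2\}$ (plus the trivial $i\in\{0,3\}$).

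\textbf{Step 3: assemble the global order.} Define the enumeration $v_1=v^*$, then list $X$ in the order fixed in Step~2, then list $Y$ in the order fixed in Step~2. Define $\varphi$ on $\partial F$ block by block using the (possibly recoloured) colours from Steps~1–2 on $\partial F_{v^*},\partial F_0,\dots,\partial F_3$; this is well-defined since these blocks are pairwise disjoint and cover $\partial F$. Then check property (b) edge by edge: an edge through $v^*$ has $v^*$ smallest and (by (ii)) its other two vertices split as one $X$-vertex $<$ one $Y$-vertex, so the pattern holds by Step~1; an edge of $F_i$ ($0\le i\le 3$) has its vertices ordered exactly as in Step~2, so the pattern holds by construction. \textbf{The main obstacle} I anticipate is Step~2: showing the recolourings on the different $F_i$ can be chosen simultaneously without global conflict. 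This is exactly where disjointness of the shadows $\partial F_{v^*},\partial F_0,\partial F_1,\partial F_2,\partial F_3$ — guaranteed by condition (ii) — does the heavy lifting, since it decouples the constraint systems on the five blocks; and it is also where Observation~\ref{3-coloring} is the right tool, because it rules out a vertex $v$ sitting "in the middle" of two edges through a common pair, which is precisely the degeneracy that would obstruct reorienting a block. Modulo this bookkeeping the proof is a finite case check.
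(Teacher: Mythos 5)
Your high-level plan matches the paper's: decouple $\partial F$ into the five blocks $\partial F_{v^*},\partial F_0,\dots,\partial F_3$ using their pairwise disjointness (which comes from condition~\rm(ii)), build the new enumeration $v^*<X<Y$ with the internal orders on $X$ and $Y$ inherited from an enumeration $\tau$ witnessing~\rm(b), and colour each block separately. You also correctly point to Observation~\ref{3-coloring} as the decisive tool. But the crux of the proof --- showing that each block actually \emph{admits} a valid $3$-colouring relative to the new enumeration --- is not carried out, and the one mechanism you do offer, ``permuting the colour classes (swap the names red/blue/green) separately on the pairs used by each $F_i$'', is a red herring: renaming colours does not resolve the only real danger, which is a single pair $\{x,y\}$ with $x\in X$, $y\in Y$ being forced red by one $F_1$-edge and blue by another.

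What the paper does at this point is more specific, and it is genuinely where the mathematics happens. For $\partial F_1$, after observing that all $Y$--$Y$ pairs may be painted green, suppose the colouring fails; then there is $v_i\in X$ and two edges $\{v_i,v_j,v_k\}$, $\{v_i,v_\ell,v_j\}\in F_1$ (with $v_\ell,v_j,v_k\in Y$) forcing $\{v_i,v_j\}$ to be both red and blue, whence $\ell<j<k$ in the new order. Since the ordering of $Y$ was chosen to inherit $\tau$, this yields $\ell<j<k$ in $\tau$ as well, and then Observation~\ref{3-coloring} (applied in $\tau$ with $v=v_i$) says these two edges cannot both lie in $E(F)$ --- contradiction. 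The same contradiction scheme handles $\partial F_2$ and $\partial F_{v^*}$. Your proposal gestures at this (``rules out a vertex $v$ sitting in the middle of two edges through a common pair'') but never assembles the two offending edges or invokes the inherited order, so there is a genuine gap between ``Observation~\ref{3-coloring} should help'' and ``a valid colouring of each block exists''. You should replace the colour-permutation paragraph in Step~2 with the explicit forced-conflict argument above; once that is done, the remainder of your Steps 1 and 3 is fine and matches the paper.
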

\begin{proof}
Let $\tau$ be any enumeration of $V(F)$ satisfying \rm(b). We now construct a new enumeration $\{v_1,v_2,\cdots,v_f\}$ of $V(F)$ by letting $v_1=v^*$, $\{v_2,\cdots,v_{|X|+1}\}=X$ and $\{v_{|X|+2},\cdots,v_f\}=Y$, in which the enumerations of $X$ and $Y$ respect $\tau$ (namely, they are ordered as given by the sub-sequence of $\tau$). Since $\partial F_{v^*}$, $\partial F_0$, $\partial F_1$, $\partial F_2$ and $\partial F_3$ are pairwise disjoint,  it suffices to show that we can define the desired $3$-coloring $\varphi$ for $\partial F_{v^*}$ and for each $\partial F_i~(0\leq i\leq3)$. The combination of these is
a  $3$-coloring $\varphi$ of $\partial F$.

For $F_0$ and $F_3$, since $\tau$ satisfies \rm(b) and the enumerations of $X$ and $Y$ are consistent with $\tau$, we can always color $\partial F_0$ and $\partial F_3$ as desired. Next, we claim that $\partial F_1$ also admits a desired $3$-coloring. We first note that all edges in $\partial F_1$ lying in $Y$ can be always painted {\color{green} green}. Therefore, failing to color $\partial F_1$ implies that there is a vertex $v_i\in X$ and a $3$-edge $\{v_i,v_j,v_k\}\in F_1$ that forces us to paint $\{v_i,v_j\}$ {\color{red}red} while another $3$-edge $\{v_i,v_{\ell},v_j\}\in F_1$ forces us to paint $\{v_i,v_j\}$ {\color{blue}blue}. Then $\ell<j<k$ follows from the construction of $\varphi$. Since $v_j,v_{\ell},v_{k}$ are elements from $Y$ and the enumeration of $Y$ follows $\tau$, a contradiction with Observation~\ref{3-coloring} is obtained. The existence of the desired $3$-colorings for $\partial F_2$ and $\partial F_{v^*}$ is promised by a similar argument.
\end{proof}

\subsection{The regularity lemma for $3$-graphs.}
A key tool in our proof is the regularity lemma for $3$-graphs.
Here we use the version of regularity lemma for $3$-graphs stated in~\cite{Reiher_2018}.
In addition, a new result concerning ``cleaning" the regular partitions will be given. Now we first introduce the necessary notation in the following.

For two disjoint sets $X$ and $Y$, we denote by $K(X,Y)$ the \emph{complete bipartite} graph between $X$ and $Y$. Suppose that $V$ is a finite set of vertices and $\mathcal{P}(2)=\{\mathcal{P}^{(1)},\mathcal{P}^{(2)}\}$ is a family of partition of $V$ with $\mathcal{P}^{(1)}=\{V_1, \dots , V_{t}\}$. Recall the definition of ``cell" and ``polyad" in Section~\ref{section-equ-partition}. For every $\{x,y\}\in \mathrm{Cross}_2$ with $x\in V_i$ and $y\in V_j$ for $1\le i<j\le t$, we have
\[
\hat{P}^{(1)}=\hat{P}^{(1)}(\{x,y\})=P^{(1)}(x)\cup P^{(1)}(y)=V_i\cup V_j.
\]
Thus, $\mathcal{K}_2(\hat{P}^{(1)})=K(V_i,V_j)$ and every cell $P^{(2)}\subseteq \mathcal{K}_2(\hat{P}^{(1)})$ is a bipartite subgraph. To ease notation, we denote by $P^{ij}_{a}=(V_i\dot\cup V_j,E^{ij}_{a})$ a specific cell in $K(V_i,V_j)$ and let $P^{ijk}_{abc}=P^{ij}_{a}\cup P^{ik}_{b}\cup P^{jk}_{c}$ denote a specific polyad over $2$-graphs, i.e. $ P^{ijk}_{abc}=(V_i\dot\cup V_j\dot\cup V_k,E^{ij}_{a}\cup E^{ik}_{b}\cup E^{jk}_{c})$.

We state the regularity lemma for $3$-graphs in~\cite{Reiher_2018} as follows.
\begin{lemma}\label{RL}{\rm({\cite[Theorem 3.2]{Reiher_2018}}).}
For all $\delta_3>0$, $\delta_2:\mathbb{N}\longmapsto(0,1]$ and $t_0\in\mathbb{N}$, there exists an integer $T_0$ such that for every $n\geq t_0$ and every $3$-graph $H=(V,E)$ on $n$ vertices the following holds.

There are integers $t$ and $\ell$ with $t_0\leq t\leq T_0$ and $\ell\leq T_0$, and there exists a partition $\mathcal{P}^{(1)}=\{V',V_1, \dots , V_{t}\}$ on $V$, and for all $1\leq i<j\leq t$ there exists a partition
\[\mathcal{P}^{ij}=\{P^{ij}_{a} : 1\leq a\leq \ell\}\]
of the edge set of the complete bipartite graph $K(V_i,V_j)$
satisfying the following properties
\begin{itemize}
  \item[{\rm (i)}] $|V'|\leq\delta_3n$ and $|V_1|=|V_2|=\cdots=|V_t|$;
  \item[{\rm (ii)}] for all $1\leq i<j\leq t$ and $ a\in[\ell]$ the bipartite graph $P^{ij}_{a}$ is $(1/\ell,\delta_2(\ell))$-regular;
  \item[{\rm (iii)}] $H$ is $(\delta_3,1)$-regular w.r.t. all but at most $\delta_3t^3\ell^3$ polyads $P^{ijk}_{abc}$ with $1\leq i<j<k\leq t$ and $a,b,c\in[\ell]$.
\end{itemize}
\end{lemma}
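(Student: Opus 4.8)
This is the weak regularity lemma for $3$-graphs of Reiher, R\"odl and Schacht, so I only indicate the plan and defer the details to \cite[Theorem~3.2]{Reiher_2018}. The plan is the standard \emph{energy-increment} argument, interleaving Szemer\'edi's graph regularity lemma (in its functional form) with a defect form of Cauchy--Schwarz at the level of triads. Throughout, all vertex partitions stay equitable --- equal-sized clusters together with a small leftover set $V'$ --- and for every pair $\{V_i,V_j\}$ the partition of $K(V_i,V_j)$ is brought to a common number of cells $\ell$ by passing to common refinements whenever necessary. To any admissible configuration $\mathcal{Q}$ I would attach an index $q(\mathcal{Q})=q_2(\mathcal{Q})+q_3(\mathcal{Q})\in[0,2]$, where $q_2$ is the mean-square density of the bipartite cell-partitions and $q_3$ is the ($|\mathcal{K}_3|$-weighted) mean-square of the densities $d(H\mid P^{ijk}_{abc})$ over all triads. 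The two facts driving the proof are that refining any constituent partition never decreases $q$ (convexity) and that $q\le 2$; hence only boundedly many genuine refinements can occur.

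I would then run the following loop, starting from an arbitrary equitable partition into $t_0$ clusters with each $K(V_i,V_j)$ a single cell. \emph{(1) Regularise the pairs:} apply the graph regularity lemma to each $K(V_i,V_j)$ to split it into $\ell$ cells of density $1/\ell$ that are each $(1/\ell,\delta_2(\ell))$-regular, then pass to common refinements so that the vertex partition and the cell-count $\ell$ agree across all pairs; since the new vertex partition refines the old one, this does not decrease $q$. \emph{(2) Fix the triads:} if property~(iii) already holds, stop; otherwise there are more than $\delta_3 t^3\ell^3$ triads $P^{ijk}_{abc}$ with respect to which $H$ is not $(\delta_3,1)$-regular, so each comes with a witness $Q\subseteq P^{ijk}_{abc}$ satisfying $|\mathcal{K}_3(Q)|\ge\delta_3|\mathcal{K}_3(P^{ijk}_{abc})|$ and $|d(H\mid Q)-d(H\mid P^{ijk}_{abc})|>\delta_3$. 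Project each such witness onto its three constituent pairs, refine every pair-partition by all the projected graphs arising from bad triads, and re-equalise the cell-count; a defect Cauchy--Schwarz estimate then shows that $q_3$, hence $q$, increases by at least a constant $c=c(\delta_3)>0$. Return to~(1).

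Since every non-halting pass through~(2) raises $q$ by at least $c(\delta_3)$ while~(1) never lowers it, the loop terminates after at most $2/c(\delta_3)$ passes. Tracking the tower-type growth of the numbers of clusters and cells through this bounded number of applications of the graph regularity lemma produces an explicit bound $T_0=T_0(\delta_3,\delta_2(\cdot),t_0)$; if the first regularisation already overshoots $t_0$ clusters one simply re-reads $t_0$ upward so that the output $t$ lies in $[t_0,T_0]$. The exceptional set $V'$ absorbs the rounding error of each equitable split together with any clusters discarded for being too small, and taking $\delta_2$ small enough at every scale keeps $|V'|\le\delta_3 n$.

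The main obstacle is move~(2): turning irregularity of $H$ with respect to many triads into a \emph{guaranteed} constant gain in $q_3$. One must project a genuinely tripartite witness $Q$ onto its three bipartite shadows and verify that refining all three pair-partitions simultaneously by those shadows separates the conditional densities of $H$ enough to shift the weighted mean square by $\Omega_{\delta_3}(1)$ --- a careful defect-Cauchy--Schwarz computation --- while keeping the partition equitable and the cell-counts uniform without eroding that gain. One also has to interleave the two moves so that re-regularising the bipartite pieces in~(1) does not undo the progress of~(2), which is arranged by choosing $\delta_2(\ell)$ tiny compared with $c(\delta_3)$ at each scale. These are precisely the technical ingredients worked out in \cite[Theorem~3.2]{Reiher_2018}.
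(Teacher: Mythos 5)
The paper does not prove this lemma: it is imported verbatim from Reiher, R\"odl and Schacht \cite[Theorem 3.2]{Reiher_2018} and used as a black box, so there is no internal argument for me to compare your sketch against. Your energy-increment outline (the index $q=q_2+q_3$, the regularize-pairs/fix-triads loop, and the defect Cauchy--Schwarz step that converts triad irregularity into a constant index gain) is a plausible account of how such a hypergraph regularity lemma is proved, and you correctly flag the genuine technical pressure points --- projecting tripartite witnesses onto their bipartite shadows while keeping cell counts uniform, and interleaving the pair-regularization with the triad-fixing so the gain is not eroded --- but since you ultimately defer all of that work to the same cited reference, this is a faithful reading of the citation rather than an independent proof, which is exactly the level of treatment the paper itself adopts.
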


\noindent \textbf{Remark.} For convenience, we say that  a polyad $P^{ijk}_{abc}$ is \textit{bad} if $H$ is not $(\delta_3,1)$-regular \textit{w.r.t.} $P^{ijk}_{abc}$, otherwise it is \textit{good}. Moreover, a triple $\{i,j,k\}\in [t]^3$ (or $\{V_i,V_j,V_k\}$) is \textit{bad} if there are at least $\sqrt{\delta_3}\ell^3$ bad polyads in $\{V_i,V_j,V_k\}$, otherwise it is \textit{good}. Since by Property ${\rm (iii)}$ globally $H$ is not $(\delta_3,1)$-regular for up to at most $\delta_3t^3\ell^3$ polyads, an easy averaging argument shows that there are at most $\sqrt{\delta_3}t^3$ bad triples in the vertex partition provided by Lemma~\ref{RL}.

In order to prove $F\in \cover_3$, we shall apply Lemma \ref{RL} to the auxiliary $3$-graph $H_w$ defined as before in Section~\ref{section}, by duplicating $w$ $n$ times for a $3$-graph $H$ and $w\in V(H)$. Before giving the result concerning ``cleaning" the regular partitions, we need the following lemma which is an refined version of the~\cite[Corollary 2.2]{greenhill2017average} in a slightly strengthened form. The proof of this lemma can be found in~\cite[Theorem 2.1]{greenhill2017average}.

\begin{lemma}\label{probability-lemma}
Let $[N]^{r}$ be the set of $r$-subsets of $\{1,\dots, N\}$ and let $f:[N]^{r}\longmapsto \mathbb{R}$ be given. Let $R$ be an uniformly random element of $[N]^{r}$. Suppose that $r<N/2$ and there exists $\zeta> 0$ such that
\[
|f(R')-f(R'')|\leq \zeta
\]
for any $R',R''\in [N]^{r}$ with $|R'\cap R''|=r-1$. Then for any real $t>0$,
\[
\mathbb{P} \left(f(R)-\mathbb{E}(f(R))\geq t\right)\leq \exp\left(-\frac{2t^2}{r\zeta^2}\right),
\]
and
\[
\mathbb{P} \left(f(R)-\mathbb{E}(f(R))\leq -t\right)\leq \exp\left(-\frac{2t^2}{r\zeta^2}\right). \qed
\]
\end{lemma}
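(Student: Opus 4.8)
The plan is to recognise Lemma~\ref{probability-lemma} as a bounded-differences (McDiarmid-type) concentration inequality for sampling without replacement and to prove it by the martingale method together with Hoeffding's lemma. First I would represent the uniformly random $r$-subset $R$ as the unordered outcome of a sequential sample without replacement: draw $X_1$ uniformly from $\{1,\dots,N\}$, then $X_2$ uniformly from the remaining $N-1$ elements, and so on through $X_r$, and set $R=\{X_1,\dots,X_r\}$. Writing $\mathcal{F}_i=\sigma(X_1,\dots,X_i)$, I would form the Doob martingale $Z_i=\mathbb{E}[f(R)\mid\mathcal{F}_i]$, so that $Z_0=\mathbb{E}(f(R))$ and $Z_r=f(R)$, reducing the claim to a tail bound for $Z_r-Z_0$.

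The crucial step is to show that, conditionally on $\mathcal{F}_{i-1}$, the increment $Z_i-Z_{i-1}$ lies in an interval of width at most $\zeta$. Setting $g_i(x)=\mathbb{E}[f(R)\mid X_1,\dots,X_{i-1},\,X_i=x]$, one has $Z_i-Z_{i-1}=g_i(X_i)-\mathbb{E}[g_i(X_i)\mid\mathcal{F}_{i-1}]$, so it suffices to prove $|g_i(x)-g_i(x')|\le\zeta$ for any two admissible values $x,x'$. I would establish this by a coupling: run the remaining draws $X_{i+1},\dots,X_r$ for the two continuations --- whose ground sets differ only in whether $x$ or $x'$ is still available --- on a common source of randomness, with the rule that whenever the $x$-process would draw $x'$ the $x'$-process instead draws $x$, and otherwise the two agree. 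Under this coupling the two resulting $r$-sets either coincide or differ in exactly one element, so the hypothesis of the lemma forces the two values of $f$ to differ by at most $\zeta$; averaging gives $|g_i(x)-g_i(x')|\le\zeta$.

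To finish I would apply Hoeffding's lemma to each conditional increment: since $Z_i-Z_{i-1}$ has conditional mean zero and is conditionally supported in an interval of length at most $\zeta$, $\mathbb{E}[e^{\lambda(Z_i-Z_{i-1})}\mid\mathcal{F}_{i-1}]\le e^{\lambda^2\zeta^2/8}$ for every $\lambda>0$. Telescoping over $i=1,\dots,r$ yields $\mathbb{E}[e^{\lambda(Z_r-Z_0)}]\le e^{\lambda^2 r\zeta^2/8}$, whence Markov's inequality gives $\mathbb{P}(f(R)-\mathbb{E}(f(R))\ge t)\le e^{-\lambda t+\lambda^2 r\zeta^2/8}$; optimising at $\lambda=4t/(r\zeta^2)$ produces exactly $\exp(-2t^2/(r\zeta^2))$, and the lower-tail bound follows by running the same argument for $-f$. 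The constant $2$ in the exponent (rather than the $1/2$ one would get from the crude estimate $|Z_i-Z_{i-1}|\le\zeta$ fed into Azuma--Hoeffding) is precisely what the interval-of-width-$\zeta$ refinement buys; the hypothesis $r<N/2$ is needed only so that the martingale has length $r$, the smaller of $r$ and $N-r$ --- one is always free to pass to the complementary $(N-r)$-subset if $r$ were the larger.

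The step I expect to be the main obstacle is the coupling claim of the second paragraph. The bounded-differences hypothesis is phrased for $r$-sets at Hamming distance one, whereas a naive attempt to control $|g_i(x)-g_i(x')|$ by swapping a single coordinate in the ordered tuple $(X_1,\dots,X_r)$ may collide with a coordinate already fixed; the coupling above is exactly what absorbs such a collision into an innocuous relabelling, keeping the two sampled sets within Hamming distance one at every stage. Everything after that claim is a routine Hoeffding-lemma-plus-Markov computation.
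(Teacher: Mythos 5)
Your proof is correct and uses the standard martingale (McDiarmid / method-of-bounded-differences) argument: a Doob martingale over sequential sampling without replacement, a swap coupling to show the conditional increments lie in an interval of width at most $\zeta$, Hoeffding's lemma for the conditional MGF bound $\exp(\lambda^2\zeta^2/8)$, telescoping, and Chernoff optimization at $\lambda = 4t/(r\zeta^2)$. The paper does not supply its own proof of this lemma --- it refers the reader to Greenhill, Isaev, Kwan and McKay~\cite{greenhill2017average} (Theorem 2.1 and Corollary 2.2), whose argument is the same martingale technique you have reproduced --- so your write-up is in effect a self-contained reconstruction of the cited proof rather than an alternative route. Two small remarks: your swap coupling is stated correctly (the two continuations either yield identical sets, when the $x$-process later draws $x'$, or sets at Hamming distance one, which is exactly where the hypothesis on $f$ is used); and you correctly observe that the hypothesis $r<N/2$ is not used by the argument as written --- the bound $\exp(-2t^2/(r\zeta^2))$ holds for all $r$, and $r<N/2$ only matters if one wants the sharper $\min\{r,N-r\}$ denominator obtained by passing to the complementary subset.
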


Similarly as in other proofs based on the regularity method it will be convenient to
“clean” the regular partition provided by Lemma~\ref{RL}.
\begin{lemma}\label{clean}
For $\alpha\in (0,1)$, $0<\delta_3 <d<\frac{\alpha}{8}$ and $\delta_2:\mathbb{N}\longmapsto(0,1]$ with $\delta_2(x)\ll \frac{1}{x}$, there exist integers $T_0$, $n_0$ and $m_0$ such that for any integer $m\ge m_0$ and every $3$-graph $H=(V,E)$ on $n$ vertices with $n\geq n_0$ and $\delta(H)\geq \alpha n^2$ the following holds.

For any $w\in V$, there exists a sub-hypergraph $\hat{H}=(\hat{V},\hat{E})\subseteq H_w$, an integer $\ell\leq T_0$, a partition $\mathcal{P}^{}(1)=\{V_0, V_1,\cdots, V_m\}$ on $\hat{V}$ with $V_0\subseteq V_w$ and $\cup_{i=1}^{m}V_i\subseteq V$, and a partition $\mathcal{P}^{ij}=\{P^{ij}_{a}:1\leq a\leq \ell\}$ of $K(V_i,V_j)$ for all integers $i,j$ with $0\leq i<j\leq m$ satisfying the following properties
\begin{itemize}
  \item[{\rm (i)}] $|V_0|=|V_1|=|V_2|=\cdots=|V_m|\geq2(1-\delta_3)n/T_0$;\label{clean-1}
  \item[{\rm (ii)}] for all $0\leq i<j\leq m$ and $a\in[\ell]$ the bipartite graph $P^{ij}_{a}$ is $(1/\ell, \delta_2(\ell))$-regular;
  \item[{\rm (iii)}] $\hat{H}$ is $\delta_3$-regular w.r.t.~all polyads $P^{ijk}_{abc}$ with $0\leq i<j<k\leq m$ and $a,b,c\in[\ell]$, and $d(\hat{H}|P^{ijk}_{abc})$ is either $0$ or at least $d$;
  \item[{\rm (iv)}] for every $0\leq i< j< k\leq m$ we have
      $$|E_{\hat{H}}(V_i,V_j,V_k)|\geq |E_{H_w}(V_i,V_j,V_k)|-2d|V_i||V_j||V_k|;$$
  \item[{\rm (v)}]there are at least $\alpha m^2/4$ pairs $\{i,j\}$ with $1\leq i<j\leq m$ such that $d(\hat{H}|P^{0ij}_{abc})\geq d$ holds for at least one good polyad $P^{0ij}_{abc}$.\label{clean-5}
\end{itemize}
\end{lemma}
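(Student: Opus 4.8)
The plan is to apply the $3$-graph regularity lemma (Lemma~\ref{RL}) to the auxiliary $3$-graph $H_w$ and then to \emph{clean} the resulting regular partition in two ways: discard every edge lying in an irregular (``bad'') polyad or in a regular polyad of relative $H_w$-density below $d$, and choose which clusters to keep as $V_0,\dots,V_m$ so that property~(v) survives the cleaning. Fix $\delta_3'\ll\min\{\delta_3,\,d^2,\,1/m\}$, in particular with $2\sqrt{\delta_3'}\le d$, and apply Lemma~\ref{RL} to $H_w$ with $\delta_3'$, the given $\delta_2$, and an auxiliary $t_0$ that is large; exactly as in the proof of Lemma~\ref{supersaturation}, we run it on the initial partition $\{V,V_w\}$, so that every cluster lies entirely in $V$ or entirely in $V_w$. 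This yields integers $t\le T_0$ and $\ell\le T_0$, a junk set $V'$ of size at most $2\delta_3'n$, clusters of common size $(2n-|V'|)/t\ge 2(1-\delta_3)n/T_0$ (about $t/2$ in $V$ and about $t/2$ in $V_w$, which we call \emph{clone-clusters}), cell partitions $\mathcal{P}^{ij}$ with each cell $(1/\ell,\delta_2(\ell))$-regular, and the guarantee that $H_w$ is $(\delta_3',1)$-regular with respect to all but at most $\delta_3' t^3\ell^3$ polyads. Call a polyad \emph{bad} if $H_w$ is not $(\delta_3',1)$-regular w.r.t.\ it, and a triple of clusters \emph{bad} if it carries at least $\sqrt{\delta_3'}\ell^3$ bad polyads; by the averaging remark after Lemma~\ref{RL}, at most $\sqrt{\delta_3'}t^3$ triples are bad.

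Next we choose the clusters. Since every clone of $w$ has the same link as $w$ and $\deg_H(w)\ge\alpha n^2$, each clone-cluster $C$ lies in at least $\alpha|C|n^2$ edges of $H_w$, all of whose remaining vertices lie in $V$; and since the bad triples account for at most $3\sqrt{\delta_3'}t^3$ incidences with clusters, at least half of the clone-clusters lie in at most $12\sqrt{\delta_3'}t^2$ bad triples. Fix a clone-cluster $V_0$ with both properties. Removing the $o(|V_0|n^2)$ edges from $V_0$ that meet $V'$ or a single cluster, at least $(1-o(1))\alpha|V_0|n^2$ edges of $H_w$ with one endpoint in $V_0$ are distributed among the $\binom{t'}{2}$ pairs of $V$-clusters, where $t'=\Theta(t)$. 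Call a pair $\{V_i,V_j\}$ of $V$-clusters \emph{rich} if the triple $\{V_0,V_i,V_j\}$ is good and $e_{H_w}(V_0,V_i,V_j)\ge(d+2\sqrt{\delta_3'})|V_0||V_i||V_j|$. Since each pair contributes at most $|V_0||V_i||V_j|$ edges while a non-rich good pair contributes fewer than $(d+2\sqrt{\delta_3'})|V_0||V_i||V_j|$, and since $d+2\sqrt{\delta_3'}<\alpha/7$ (using $d<\alpha/8$ and $\delta_3'$ small), a weighted averaging over a uniformly random $m$-subset $S$ of the $V$-clusters shows that the expected number of rich pairs inside $S$ is at least $c\alpha m^2$ for some absolute constant $c>1/4$. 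The expected numbers of bad triples inside $S$, and of bad triples of the form $\{V_0,V_i,V_j\}$ with $V_i,V_j\in S$, are $O(\sqrt{\delta_3'}m^3)$, hence less than $1$ once $\delta_3'$ is small in terms of $m$; and the number of rich pairs in $S$ changes by at most $2m$ under a single swap of elements of $S$, so the concentration bound of Lemma~\ref{probability-lemma} (with $m_0$ large) yields, with positive probability, an $m$-set $S$ spanning no bad triple, all of whose triples with $V_0$ are good, and containing at least $\alpha m^2/4$ rich pairs. Put $\{V_1,\dots,V_m\}:=S$, $\hat{V}:=V_0\cup\dots\cup V_m$, and let $\hat{H}$ be obtained from $H_w[\hat{V}]$ by deleting every edge that does not lie in the triangle set of a good polyad of relative $H_w$-density at least $d$.

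It remains to check (i)--(v). Parts (i) and (ii) are immediate from Lemma~\ref{RL} and the choice of clusters. For (iii): the triangle sets of distinct polyads partition $\mathrm{Cross}_3$, so the deletion affects no surviving (good, density $\ge d$) polyad, whence every polyad over $V_0,\dots,V_m$ is either empty for $\hat{H}$ or carries a $(\delta_3',1)$-regular (so $(\delta_3,1)$-regular) slice of $\hat{H}$ of density $\ge d$. For (iv): a selected triple $\{V_i,V_j,V_k\}$ is good, so it carries fewer than $\sqrt{\delta_3'}\ell^3$ bad polyads, each with at most $2\ell^{-3}|V_i||V_j||V_k|$ triangles (by regularity of the cells, since $\delta_2(\ell)\ll 1/\ell$), while its good polyads of density $<d$ carry in total fewer than $d|V_i||V_j||V_k|$ edges (as the triangle sets of all its polyads partition $V_i\times V_j\times V_k$); hence at most $(2\sqrt{\delta_3'}+d)|V_i||V_j||V_k|\le 2d|V_i||V_j||V_k|$ crossing edges are deleted. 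For (v): for a rich pair $\{V_i,V_j\}$ the triple $\{V_0,V_i,V_j\}$ is good, so its bad polyads carry at most $2\sqrt{\delta_3'}|V_0||V_i||V_j|$ edges and its good polyads of density $<d$ fewer than $d|V_0||V_i||V_j|$ edges in total; since $e_{H_w}(V_0,V_i,V_j)\ge(d+2\sqrt{\delta_3'})|V_0||V_i||V_j|$, some good polyad $P^{0ij}_{abc}$ has $d(H_w|P^{0ij}_{abc})\ge d$, and this polyad is not deleted, so $d(\hat{H}|P^{0ij}_{abc})\ge d$; as $S$ contains at least $\alpha m^2/4$ rich pairs, (v) follows.

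I expect the cluster selection of the second paragraph to be the main point: property~(v) demands a number of ``degree-witnessing'' pairs that is linear in $m^2$, so one cannot merely take any cluster set avoiding the bad triples, but must balance the weighted averaging producing rich pairs against the deletion needed to kill bad triples. This is what forces $\delta_3'$ to be tiny relative to $m$ (so that the sparse irregular garbage of the partition cannot spoil a typical $m$-set) and $m_0$ to be large (so that Lemma~\ref{probability-lemma} bites); as usual $T_0$ then depends on $m$ through $t_0$, which is harmless since $m$ is a fixed constant in the application.
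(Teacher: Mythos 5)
Your proposal is correct and follows essentially the same route as the paper: apply the $3$-graph regularity lemma to the auxiliary graph $H_w$ with an auxiliary parameter $\delta_3'\ll\delta_3,\,d^2,\,1/\mathrm{poly}(m)$; average over the clone-clusters to fix a $V_0$ lying in few bad triples; count edges incident to $V_0$ to show that a positive proportion of pairs of $V$-clusters supports a dense good polyad with $V_0$; invoke the Azuma-type concentration of Lemma~\ref{probability-lemma} over a uniformly random $m$-subset of the $V$-clusters to simultaneously avoid all bad triples and retain $\Omega(\alpha m^2)$ good pairs; and finally delete edges lying in irregular polyads or in regular polyads of relative density below $d$. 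Your verification of properties (i)--(v), including the observation that the triangle sets of the polyads partition $V_i\times V_j\times V_k$ so that sparse good polyads carry $<d|V_i||V_j||V_k|$ edges and bad polyads carry $O(\sqrt{\delta_3'})|V_i||V_j||V_k|$ edges, matches the paper's (more compressed) accounting.

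Two small points of comparison. First, the paper defines its ``good pairs'' directly as pairs $\{i,j\}$ admitting a good polyad $P^{0ij}_{abc}$ of density $\geq d$ and then bounds the number of non-good pairs via edge counting; you run the same counting in the opposite direction, defining a ``rich pair'' by a raw edge-count threshold $e_{H_w}(V_0,V_i,V_j)\geq(d+2\sqrt{\delta_3'})|V_0||V_i||V_j|$ and then deducing the existence of such a polyad. These are interchangeable. Second, your proposal is actually a touch more careful than the paper on one detail: you explicitly control, in the random selection of $S$, the bad triples of the form $\{V_0,V_i,V_j\}$ with $V_i,V_j\in S$ as well as bad triples inside $S$. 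The paper's random variable $f(M)$ appears to count only the latter (its Lipschitz constant $\binom{m-1}{2}$ and its expectation $49\sqrt{\delta_3'}\binom{m}{3}$ are consistent with triples internal to $M$), yet the final cleaning step claims that every triple in $\{0,1,\dots,m\}$ is good; your version closes this small gap cleanly by folding the $V_0$-triples into the same $O(\sqrt{\delta_3'}m^3)$ expectation and the same concentration bound, at the harmless cost of increasing the Lipschitz constant to $\binom{m}{2}$.
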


\begin{proof} Given $\alpha>0$ and $0<\delta_3 <d<\frac{\alpha}{8}$, let $H=(V,E)$ be an $n$-vertex $3$-graph with $\delta(H)\geq \alpha n^2$. For the proof of Lemma~\ref{clean}, we shall apply the
regularity lemma (Lemma \ref{RL}) with $\delta'_3$ sufficiently small such that
\[64\delta'_3<\delta_3^2 \quad \text{and} \quad48\sqrt{\delta'_3}\binom{m}{3}<\frac{1}{3}
\]
and the integer $t_0=\max\{m,\lceil\frac{1}{\delta'_3}\rceil\}$ and the given function $\delta_2:\mathbb{N}\longmapsto(0,1]$.

Given $w\in V$, we apply Lemma \ref{RL} to $H_w$ with the initial partition $V\cup V_w$. Now we obtain a partition $\mathcal{P}^{(1)}=\{V',V_1, \dots , V_{2t}\}$ of $V\cup V_w$ such that $|V'|\leq2\delta'_3n$ and $|V_1|=\cdots=|V_{2t}|$ with $t\ge t_0$. Clearly, for all $1\leq i<j\leq 2t$ and $ a\in[\ell]$, the bipartite graph $P^{ij}_{a}$ is $(1/\ell,\delta_2)$-regular. Moreover, $H_w$ is $(\delta'_3,1)$-regular w.r.t. all but at most $\delta'_3(2t)^3\ell^3$ polyads $P^{ijk}_{abc}$ with $1\leq i<j<k\leq 2t$ and $a,b,c\in[\ell]$. Without loss of generality, let $\bigcup_{i=1}^{t}V_i\subseteq V$. By the remark of Lemma \ref{RL}, there are at most $8\sqrt{\delta'_3}t^3$ bad triples in partition $\mathcal{P}^{(1)}$. Thus there exists a cluster $V_0\subseteq V_w$ such that there are at most $8\sqrt{\delta'_3}t^2$ bad triples between $V_0$ and $V_1, \dots , V_{t}$.

Next, we only consider the induced subhypergraph $H'_w:=H_w[\bigcup_{i=0}^{t}V_i]$.
We say that a pair $\{i,j\}\in [t]^{2}$ is {\emph{good}} if $\{V_0,V_i,V_j\}$ is a good triple and there is a good polyad $P^{0jk}_{abc}$ such that $d(H'_w |P^{0jk}_{abc})\geq d$.
Since there are at most $8\sqrt{\delta'_3}t^2$ bad triples containing $V_0$ in $\mathcal{P}^{(1)}$ and $64\delta'_3<\delta^2_3$, there are at most
\[
(8\sqrt{\delta'_3}t^2)\big(\frac{n}{t}\big)^3\leq \frac{2\delta_3}{t}n^3
\]
$3$-edges lying on these bad triples containing $V_0$. Furthermore, if a good triple $\{V_0,V_i,V_j\}$ contains no good polyad $P^{0ij}_{abc}$ with $d(H'_w |P^{0ij}_{abc})\geq d$, then together with the triangle counting lemma, we know that the number of $3$-edges lying on $\{V_0,V_i,V_j\}$ are at most
\[\sum_{\{a,b,c\}\in[\ell]^3}d\cdot \mathcal{K}_3(P^{0ij}_{abc})+\sqrt{\delta'_3}\ell^3\cdot\left(\frac{1}{\ell^3}+3\delta_2\right)\left(\frac{n}{t}\right)^3\leq 2d\left(\frac{n}{t}\right)^3.
\]
We recall that $V_0$ is a set of clone of $w$ and $\delta(H)\ge \alpha n^2$.
Hence, there are at least
\[\sum_{w'\in V_0}\deg_{H_w}(w')\geq |V_0|\cdot\alpha n^2\ge \frac{\alpha(1-\delta'_3)}{t} n^3\]
$3$-edges between $V_0$ and $V$.
Therefore, the number of good pairs in $[t]^2$ is at least
\[
\left(\frac{\alpha(1-\delta'_3)}{t} n^3-\frac{2\delta_3}{t}n^3-2d\binom{t}{2}
\left(\frac{n}{t}\right)^3\right)/\left(\frac{n}{t}\right)^3\geq \frac{\alpha}{2}t^2.
\]

We shall next use Lemma \ref{probability-lemma} to choose a subset $\{i_1,i_2,\dots,i_m\}$ of $[t]$ in which the number of bad triples is zero and the number of good pairs is at least $\frac{\alpha}{4}m^2$. Choose an element $M\in [t]^{m}$ uniformly at random. Let $f(M)$ be the number of bad triples in $M$ and $g(M)$ be the number of good pairs in $M$. Evidently, for any $M',M''\in [t]^{m}$ with $|M'\cap M''|=m-1$, we have
\[
|f(M')-f(M'')|\leq \binom{m-1}{2}\ \ \text{and}\  \  |g(M')-g(M'')|\leq m-1.
\]
Since there are at most $8\sqrt{\delta'_3}t^3<49\sqrt{\delta'_3}\binom{t}{3}$ bad triples and at least $\frac{\alpha}{2}t^2$ good pairs, by the choice of $\delta'_3$, we have
\[
\mathbb{E}(f(M))< \frac{49\sqrt{\delta'_3}\binom{t}{3}\binom{t-3}{m-3}}{\binom{t}{m}}=49\sqrt{\delta'_3}\binom{m}{3}<\frac{1}{3}
\ \ \text{and} \ \
\mathbb{E}(g(M))\geq \frac{\frac{\alpha}{2}t^2\binom{t-2}{m-2}}{\binom{t}{m}}> \alpha\binom{m}{2}.
\]
By Lemma \ref{probability-lemma}, we obtain that
\[\begin{split}
\mathbb{P}\left( f(M)\geq \frac{2}{3}\right) & \leq \mathbb{P}\left( f(M)\geq \mathbb{E}(f(M))+ \frac{1}{3}\right)\\
 &\leq \exp\left(-\frac{2/9}{m(\binom{m-1}{2})^2}\right)
 \leq \exp\left(-\frac{8}{9m^5}\right),
\end{split}\]
and
\[
\begin{split}
\mathbb{P}\left( g(M)\leq \frac{\alpha}{2}\binom{m}{2}\right) & \leq \mathbb{P}\left( g(M)\leq \mathbb{E}(g(M))- \frac{\alpha}{2}\binom{m}{2}\right) \\
& \leq \exp\left(-\frac{1/2\alpha^2(\binom{m}{2})^2}{m(m-1)^2}\right)
 = \exp\left(-\frac{\alpha^2m}{8}\right).
\end{split}
\]
Let
\[
h(m)= \exp\left(-\frac{8}{9m^5}\right)+ \exp\left(-\frac{\alpha^2m}{8}\right).
\]
We get
\[
\lim\limits_{m\to+\infty} h(m)= 1.
\]
On the other hand,
\[
h'(m)= \exp\left(-\frac{8}{9m^5}\right)\cdot\frac{40}{9}m^{-6}
-\exp\left(-\frac{\alpha^2m}{8}\right)\cdot\frac{\alpha^2}{8}>0
\]
for large $m$. Therefore, when $m$ is sufficiently large, the following inequality holds
\[
\mathbb{P}\bigg(f(M)\geq \frac{2}{3}\ \text{or}\ g(M)<\frac{\alpha}{2}\binom{m}{2}\bigg)\leq h(m)<1.
\]
In other words, with positive probability, we can choose an $m$-set $\{i_1,i_2,\dots,i_m\}\subseteq [t]$ such that every triple is good and the number of good pairs is at least $\frac{\alpha}{4}m^2$.

Finally, we construct the desired hypergraph $\hat{H}$. We consider the induced subhypergraph $H''_w:=H'_w[\bigcup_{j=1}^{m}V_{i_j}\cup V_0]$. Let us remove the $3$-edges which lie in a polyad $P$ such that $d(H''_w |P)< d$. Moreover, we also remove the $3$-edges which lie in bad polyads and denote by $\hat{H}$ the resulting $3$-graph after these deletions.
Since for any triple $\{i_j,i_k,i_l\}\subseteq \{0,1,\dots,m\}$, $\{V_{i_j},V_{i_k},V_{i_l}\}$ contains at most $\sqrt{\delta'_3}\ell^3<\delta_3\ell^3$ bad polyads and $d> \delta_3$, we have
\[|E_{\hat{H}}(V_{i_j},V_{i_k},V_{i_l})|\geq |E_{H_w}(V_{i_j},V_{i_k},V_{i_l})|-2d|V_{i_j}||V_{i_k}||V_{i_l}|.\]
Therefore, $\hat{H}$ has all the desired properties.
\end{proof}

To embed hypergraphs of fixed isomorphism type into appropriate, regular and dense polyads of the
partition provided by Lemma \ref{clean}, we shall need an embedding lemma stated in \cite{Reiher_2018}.

\begin{lemma}\label{EL}{\rm (Embedding Lemma~ \cite[Theorem 3.4]{Reiher_2018}).} For every $3$-graph $F$ with vertex set $V(F)=[f]$ and $d_3>0$, there exists $\delta_3>0$ and function $\delta_2:\mathbb{N}\longmapsto(0,1]$ and $N:\mathbb{N}\longmapsto\mathbb{N}$ such that the following holds for every $\ell\in \mathbb{N}$.

Suppose $P=(V_1\dot\cup\cdots\dot\cup V_{f},E(P))$ is a $(\frac{1}{\ell},\delta_2(\ell))$-regular, $f$-partite graph whose vertex classes satisfy $|V_1|=\cdots=|V_{f}|\geq N(\ell)$ and suppose $H$ is an $f$-partite $3$-graph such that for every $3$-edge $\{i,j,k\}\in E(F)$ we have
\begin{itemize}
  \item [{\rm(1)}] $H$ is $\delta_3$-regular w.r.t. the tripartite graph $P[V_i\dot\cup V_j\dot\cup V_k]$ and
  \item [{\rm(2)}] $d(H|P[V_i\dot\cup V_j\dot\cup V_k])\geq d_3$,
\end{itemize}
then $H$ contains a copy of $F$. In fact, there is a monomorphism $q$ from $F$ to $H$ with $q(i)\in V_i$ for all $i\in[f]$.
\end{lemma}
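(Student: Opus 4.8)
The plan is to prove Lemma~\ref{EL} by a greedy vertex-by-vertex embedding of $F$, maintaining shrinking \emph{candidate sets} $C_1,\dots,C_f$ with $C_i\subseteq V_i$. Fix the order $1<2<\dots<f$ on $V(F)$, so that for every hyperedge $\{i,j,k\}\in E(F)$ with $i<j<k$ the vertex $k$ is embedded last. For the hierarchy, given $F$ (hence $f$) and $d_3$, first choose $\delta_3\ll 1/f,\,d_3$; then, for each $\ell$, fix an internal target fraction $\rho=\rho(\ell,f,d_3)>0$ (roughly $\ell^{-f}d_3^{f^2}$), then $\delta_2(\ell)\ll\rho,\delta_3$, then $N(\ell)\gg 1/\delta_2(\ell)$. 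Only $\delta_3$ and $\delta_2$ must be $\ell$-uniform; the bookkeeping parameter $\rho$ is allowed to depend on $\ell$, which is crucial (see below). Initialize $C_i=V_i$.

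When it is time to embed vertex $t$, first replace $C_t$ by $C_t':=C_t\cap\bigcap_{\{a,b,t\}\in E(F),\,a<b<t}L_{ab}$, where $L_{ab}:=\{v\in V_t:\{x_a,x_b,v\}\in E(H)\}$ is the link in $H$ of the already-embedded pair $\{x_a,x_b\}$; note $C_t\subseteq N_P(x_a)\cap N_P(x_b)$ so $C_t'$ is exactly the part of $C_t$ completing those hyperedges. Then choose $x_t\in C_t'$ that is \emph{typical}, meaning: (A) for every not-yet-embedded $u>t$ with $\{t,u\}\in\partial F$, the set $C_u\cap N_P(x_t)$ has size at least $(1/\ell-\delta_2^{1/3})|C_u|$ --- this is the defect form of $P$-regularity, using that $C_u$ is still a $\rho$-fraction of $V_u$ and that the restriction of a $(1/\ell,\delta_2)$-regular bipartite graph to $\rho$-fractions stays regular; and (B) for every hyperedge $\{a,b,c\}\in E(F)$ containing $t$ with $t\ne\max\{a,b,c\}$, the vertex $x_t$ --- together with its already-embedded partner in that hyperedge, if any --- is chosen \emph{slice-good}, so that the corresponding link $2$-graph (resp.\ link set) inside the later vertex classes remains $O(\delta_3^{1/4})$-regular with density $d_3\pm O(\delta_3^{1/4})$ relative to the ambient bipartite graphs. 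Item (B) rests on a \emph{slicing lemma for $3$-graph triads} (a corollary of $\delta_3$-regularity: all but an $O(\delta_3^{1/2})$-fraction of vertices, resp.\ $P$-edges, of a $\delta_3$-regular density-$\ge d_3$ triad are slice-good), \emph{applied once, at the scale of the full triad} $P[V_a\dot\cup V_b\dot\cup V_c]$. Each of the $\le f$ conditions in (A) and the $\le f^2$ conditions in (B) excludes only a small fraction of $C_t'$ (of order $\delta_3^{1/2}$ plus a quantity tending to $0$ with $\delta_2(\ell)$), so a typical $x_t$ exists provided $|C_t'|\ge\tfrac12\rho|V_t|$. Having picked $x_t$, update $C_u\leftarrow C_u\cap N_P(x_t)$ for the relevant $u$ and update the recorded link objects.

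It remains to verify the size invariant, namely that at every stage each $C_u$ (and each $C_t'$) is at least the $\ell$-dependent but positive fraction $\rho$ of $V_u$. For the $P$-neighbourhood updates this is immediate from (A), since a single class is updated at most $f-1$ times. The essential point --- and the main obstacle --- is the lower bound on $|C_t'|$: the at most $f^2$ link sets $L_{ab}$, together with the at most $f-1$ active $P$-neighbourhoods already cutting out $C_t$, must intersect in a $\rho$-fraction of $V_t$. This is precisely a clique/triangle-counting statement for the regular $(2,3)$-complex built from the bipartite graphs $P[C_u,C_{u'}]$, the recorded link $2$-graphs, and the link sets themselves; it follows from the counting lemma for regular bipartite graphs once each of these auxiliary objects is known to be regular, which is exactly what slice-goodness in (B) supplies. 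The feature that makes the stated constants legitimate is that $\delta_3$-regularity of $H$ is invoked only through the slicing lemma at full-triad scale (there are only $\le\binom f3$ triads), never on the tiny restricted triads $P[C_a,C_b,C_c]$; all subsequent shrinkage, by factors $\approx 1/\ell$ and $\approx d_3$, is absorbed at the graph level by $P$-regularity and by $\delta_2(\ell)\ll\rho(\ell)$, so no power of $\ell$ is forced on $\delta_3$.

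Finally, $C_f'\ne\emptyset$ produces $x_f$, and $(x_1,\dots,x_f)$ is the required copy of $F$: each $x_i$ lies in $V_i$ and the $V_i$ are pairwise disjoint, so $q\colon i\mapsto x_i$ is automatically injective and hence a monomorphism from $F$ into $H$ with $q(i)\in V_i$. The technical heart is the bookkeeping of all the regular auxiliary objects (the pair graphs, the link $2$-graphs, the link sets) and their inheritance under the successive vertex deletions, which one controls via the graph and hypergraph restriction/slicing lemmas; an essentially equivalent alternative is to induct on $f$, proving the stronger assertion that $H$ contains $\Omega\!\big(\prod_{i\in[f]}|V_i|\big)$ copies of $F$ and passing, for the inductive step, to the (still regular and dense) link structure of a typical vertex of $V_f$.
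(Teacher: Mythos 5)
The paper gives no proof of this lemma; it is imported verbatim as Theorem~3.4 of Reiher, R\"odl and Schacht~\cite{Reiher_2018}, so there is no in-paper argument to compare against.

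Judged on its own, your greedy embedding sketch correctly isolates the central difficulty---that $\delta_3$ must depend only on $F$ and $d_3$ (not on $\ell$), so the $3$-graph regularity may only be invoked at full-triad scale and all $\ell$-dependent shrinkage must be absorbed by $\delta_2(\ell)$ at the $2$-graph level---but it asserts rather than proves that this separation can be maintained. The unresolved step is your size invariant for $C_t'$. After several embedding steps each candidate set $C_u$ is an $\ell^{-\Theta(f)}$-fraction of $V_u$, and the ``clique/triangle-counting statement'' you invoke requires the recorded link $2$-graphs to remain regular and dense on the restricted pair-graph $P[C_a,C_b]$. Those link objects are regular relative to $P$ with a parameter $\varepsilon$ depending only on $\delta_3$, but $e(P[C_a,C_b])$ is an $\ell^{-\Theta(f)}$-fraction of $e(P[V_a,V_b])$; for $\ell$ large this falls below the threshold $\varepsilon\, e(P[V_a,V_b])$ at which the link regularity statement conveys any information. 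So ``slicing once at full-triad scale'' does not automatically pass down to the deeply restricted candidate sets, and the claim that ``no power of $\ell$ is forced on $\delta_3$'' is exactly what needs a proof rather than an assertion. Negotiating this---e.g.\ by formulating the inherited link regularity against $P$-triangle counts inside the current candidate sets, or by interleaving the $P$-restrictions and $H$-restrictions so the $\delta_3$-level regularity is never queried on $\ell$-sparse pairs---is the actual content of the Reiher--R\"odl--Schacht argument, and it is missing from your sketch. Until that bookkeeping is supplied, this is a genuine gap.
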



\subsection{Reduced Hypergraphs.}
In order to use the embedding lemma to show that every $w\in V(H)$ is in one copy of $F$, we need to know the distribution of dense and regular polyads. For this purpose we need to introduce the so-called \textit{reduced hypergraphs}. The terminology below follows \cite[Section~3]{reiher2018some}.

Consider any finite set of indices $I$, suppose that associated with any two distinct
indices $i, j\in I$ we have a finite nonempty set of vertices $\mathcal{P}^{ij}$, and that for distinct pairs of indices the corresponding vertex classes are disjoint. Assume further that for any three distinct indices $i, j, k \in I$ we are given a 3-partite 3-graph $\mathcal{A}^{ijk}$ with vertex classes $\mathcal{P}^{ij}$, $\mathcal{P}^{jk}$ and $\mathcal{P}^{ik}$. Under such circumstances we call the $\binom{|I|}{2}$-partite $3$-graph $\mathcal{A}$ defined by
\[
V(\mathcal{A})=\bigcup_{\{i,j\}\in {I}^{2}}\mathcal{P}^{ij} \ \ \text{and}\ \
E(\mathcal{A})=\bigcup_{\{i,j,k\}\in {I}^{3}}E(\mathcal{A}^{ijk})
\]
a reduced hypergraph. We also refer to $I$ as the \text{index set} of $\mathcal{A}$, to the sets $\mathcal{P}^{ij}$ as the \text{vertex classes} of $\mathcal{A}$, and to the hypergraphs $\mathcal{A}^{ijk}$ as the \text{constituents} of $\mathcal{A}$. For $\eta>0$ such a reduced hypergraph $\mathcal{A}$ is said to be \emph{$\eta$-dense} if
\[
|E(\mathcal{A}^{ijk})|\geq \eta |\mathcal{P}^{ij}||\mathcal{P}^{ik}||\mathcal{P}^{jk}|
\]
holds for every triple $\{i,j,k\}\in [I]^{3}$.

In order to use Lemma \ref{EL} to embed $F$, we need the following two lemmas, where the first one is due to Reiher, R\"{o}dl and Schacht \cite{Reiher2018Hypergraphs}.

\begin{lemma}\label{Rembed}{\rm (\cite[Lemma 3.1]{Reiher2018Hypergraphs}).}
Given $\eta>0$ and $h\in \mathbb{N}$, there exists an integer $q$ such that the following holds. If $\mathcal{A}$ is an $\eta$-dense reduced hypergraph with index set $[q]$, vertex class $\mathcal{P}^{ij}$ and constituents $\mathcal{A}^{ijk}$, then
\begin{itemize}
  \item [{\rm (i)}] there are indices $\lambda(1)<\cdots<\lambda(h)$ in $[q]$ and
  \item [{\rm (ii)}] for each pair $1\leq r<s\leq h$ there are three vertices $P^{\lambda(r)\lambda(s)}_{{\rm{\color{red} red}}}$, $P^{\lambda(r)\lambda(s)}_{{\rm {\color{blue}blue}}}$ and $P^{\lambda(r)\lambda(s)}_{{\rm {\color{green}green}}}$ in $\mathcal{P}^{\lambda(r)\lambda(s)}$ such that for every triple indices $1\leq r<s<t\leq h$ the three vertices $P^{\lambda(r)\lambda(s)}_{{\rm{\color{red} red}}}$, $P^{\lambda(r)\lambda(t)}_{{\rm {\color{blue}blue}}}$ and $P^{\lambda(s)\lambda(t)}_{{\rm {\color{green}green}}}$ form a hyperedge in $\mathcal{A}^{\lambda(r)\lambda(s)\lambda(t)}$.
\end{itemize}
\end{lemma}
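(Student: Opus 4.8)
The plan is to prove Lemma~\ref{Rembed} by induction on $h$, constructing the desired chain $\lambda(1)<\cdots<\lambda(h)$ one index at a time and carrying along a large reservoir $R\subseteq[q]$ of still-unused indices together with the coloured vertices already selected. The base case $h\le 2$ is immediate, since then there are no triples to satisfy: any two indices and any vertex of $\mathcal{P}^{\lambda(1)\lambda(2)}$, used in all three roles, will do. For the inductive step, suppose $\lambda(1)<\cdots<\lambda(m)$ have been found together with vertices $P^{\lambda(r)\lambda(s)}_{\mathrm{red}},P^{\lambda(r)\lambda(s)}_{\mathrm{blue}},P^{\lambda(r)\lambda(s)}_{\mathrm{green}}$ for all $r<s\le m$ meeting every triple condition among $\{\lambda(1),\dots,\lambda(m)\}$, and a reservoir $R$ consisting of indices larger than $\lambda(m)$; I would append a new largest index $\lambda(m+1)\in R$. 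The genuinely new requirements are the triple conditions for $\{\lambda(a),\lambda(b),\lambda(m+1)\}$ with $a<b\le m$; once the already-fixed red vertices $P^{\lambda(a)\lambda(b)}_{\mathrm{red}}$ are inserted as first coordinates, these become a \emph{transversal} problem — choose blue vertices in the classes $\mathcal{P}^{\lambda(a)\lambda(m+1)}$ and green vertices in the classes $\mathcal{P}^{\lambda(b)\lambda(m+1)}$ so that each of the $\binom{m}{2}$ triples spans an edge of the corresponding constituent $\mathcal{A}^{\lambda(a)\lambda(b)\lambda(m+1)}$.

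Two preparations make this feasible. First, I would reduce to the case that every vertex class of $\mathcal{A}$ is large (say of size at least some $N_0(\eta,h)$): blowing up each class uniformly leaves every constituent $\eta$-dense, and a canonically coloured structure in the blow-up projects to one in $\mathcal{A}$. Second, and crucially, the induction hypothesis must be strengthened by a \emph{robustness} invariant on the vertices selected so far — essentially, that for every pair $\{\lambda(a),\lambda(b)\}$ the chosen red vertex has a dense link in $\mathcal{A}^{\lambda(a)\lambda(b)k}$ for \emph{every} $k\in R$, and, more strongly, that the links that will be consulted remain quasi-random after the restrictions performed below. Granting such an invariant, I would pick $\lambda(m+1)\in R$, then, handling one new pair at a time, apply a dependent random choice / cleaning step inside the dense links to locate large sub-classes over which the relevant constituents behave like complete structures, and finally select the new blue and green vertices from these sub-classes; by construction all $\binom{m}{2}$ new triple conditions hold simultaneously. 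It remains to reinstate robustness for the new red vertices $P^{\lambda(a)\lambda(m+1)}_{\mathrm{red}}$: an averaging argument shows a positive proportion of vertices of $\mathcal{P}^{\lambda(a)\lambda(m+1)}$ have dense links towards a positive proportion of $R$, so one fixes such a vertex and then discards from $R$ the indices failing to witness it. Each stage shrinks $R$ by a factor depending only on $\eta$ and $h$, so $q=q(\eta,h)$ large enough keeps $R$ nonempty throughout.

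I expect the main obstacle to be precisely the simultaneous compatibility in the transversal step: $\eta$-density of the individual constituents is by itself \emph{not} enough to force a transversal copy of the required pattern across the relevant classes — already with vertex classes of size two one can arrange dense bipartite links admitting no consistent choice — so the whole weight of the argument lies in calibrating the robustness/quasi-randomness invariant to be simultaneously strong enough to guarantee the transversal and weak enough to be maintainable under mere $\eta$-density. I anticipate needing a defect (Cauchy--Schwarz) form of the density condition together with a careful cleaning of the reduced hypergraph at each stage, in the spirit of the proof of Theorem~\ref{0} in~\cite{Reiher2018Hypergraphs}; checking that this cleaning preserves enough structure to sustain the induction is the delicate part. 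The remaining ingredients — the base case, the blow-up reduction, and the accounting of reservoir sizes — are routine.
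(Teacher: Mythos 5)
The paper does not reprove Lemma~\ref{Rembed}; it imports it from Reiher--R\"odl--Schacht. However, the paper's own proof of the analogous Lemma~\ref{biembed} is stated to ``follow the idea in the proof of Lemma~\ref{Rembed}'', and that argument reveals a genuinely different strategy from yours: one does \emph{not} add new indices one at a time, but rather fixes all the red vertices first (one per pair, processed in lexicographic order, each time shrinking the reservoir of candidate third indices by a factor $\ge\eta/2$ via pigeonhole/double counting), then all the blue vertices (similarly shrinking a reservoir of candidate ``$s$''-indices), and finally all the green vertices. Because the colours are processed in stages, at the moment each vertex is chosen there is only a \emph{degree/co-degree} condition to meet relative to already-fixed vertices, never a simultaneous transversal: the relevant ``good'' subsets have density $\ge\eta/2$ or $\ge\eta/4$ by the density assumption alone, and one simply selects a vertex that is good for a positive fraction of the surviving indices, then discards the rest. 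Everything reduces to iterated double counting, and $q$ is chosen large enough that the reservoir survives the product of all the $\eta$-fractions. No regularity, no Cauchy--Schwarz defect form, no quasi-randomness of links is needed.

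Your proposal instead inducts on $h$, appending $\lambda(m+1)$ last and attempting to pick all blue and green vertices over the new index simultaneously so that all $\binom{m}{2}$ newly exposed triples are satisfied. You correctly identify the resulting obstacle: this is a multi-constraint transversal problem, and mere $\eta$-density of the constituents does not force a transversal (your small counterexample is on the right track --- one can build dense links with no consistent selection). Your proposed remedy is a ``robustness/quasi-randomness invariant'' on the red vertices plus a dependent-random-choice cleaning step, but this invariant is never pinned down, and you yourself flag that it is unclear whether such an invariant is strong enough to force the transversal while remaining maintainable from $\eta$-density alone. That is precisely the gap: as written, the inductive step is not established, and I do not think it can be salvaged in the form stated --- in particular, there is no reason a single vertex in $\mathcal{P}^{\lambda(a)\lambda(b)}$ should have a link in $\mathcal{A}^{\lambda(a)\lambda(b)k}$ that is quasi-random rather than merely dense, and controlling the \emph{intersection} of the $b-1$ constraint sets on $G_b$ is exactly where density alone fails. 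The colour-by-colour ordering in the cited proof is not a cosmetic choice; it is what dissolves the transversal problem entirely. Your base case and blow-up reduction are fine, but the heart of the argument is missing.
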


\begin{lemma}\label{biembed}
Given $\eta>0$ and $h_1,h_2\in \mathbb{N}$, there exist integers $q_1$ and $q_2$ such that the following holds. If $\mathcal{A}$ is an $\eta$-dense reduced hypergraph with index set $[q_1+q_2]$, vertex class $\mathcal{P}^{ij}$ and constituents $\mathcal{A}^{ijk}$, then there are indices $\lambda(1)<\cdots<\lambda(h_1)$ in $[q_1]$ and $\sigma(1)<\cdots<\sigma(h_2)$ in $[q_1+1,q_1+q_2]$ satisfying that
\begin{itemize}
  \item [{\rm (i)}] for each pair $1\leq r<s\leq h_1$ there is a vertex $P^{\lambda(r)\lambda(s)}_{{\rm {\color{red}red}}}$;
  \item [{\rm (ii)}] for any $1\leq r\leq h_1$ and any $1\leq t\leq h_2$ there are two vertices $P^{\lambda(r)\sigma(t)}_{{\rm {\color{blue}blue}}}$ and $P^{\lambda(r)\sigma(t)}_{{\rm {\color{green}green}}}$;
  \item [{\rm (iii)}] for every triple indices $1\leq r<s\leq h_1$ and $1\leq t\leq h_2$ the three vertices $P^{\lambda(r)\lambda(s)}_{{\rm {\color{red}red}}}$, $P^{\lambda(r)\sigma(t)}_{{\rm {\color{blue}blue}}}$ and $P^{\lambda(s)\sigma(t)}_{{\rm {\color{green}green}}}$ form a hyperedge in $\mathcal{A}^{\lambda(r)\lambda(s)\sigma(t)}$.
\end{itemize}
\end{lemma}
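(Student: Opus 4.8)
The plan is to prove Lemma~\ref{biembed} as a bipartition-respecting analogue of Lemma~\ref{Rembed}, adapting the iterative argument behind the latter rather than invoking it as a black box. Two simplifications streamline the task. Since deleting indices from an $\eta$-dense reduced hypergraph again yields an $\eta$-dense one, and since conditions (i)--(iii) impose no constraint on $\lambda(1)<\cdots<\lambda(h_1)$ beyond what is recorded in (iii), we may simply take $q_1:=h_1$ and $\lambda(r)=r$ for $r\in[h_1]$; the whole content then becomes: commit to red vertices $P^{rs}_{\mathrm{red}}\in\mathcal P^{rs}$ for $1\le r<s\le h_1$, choose $\sigma(1)<\cdots<\sigma(h_2)$ inside $[h_1+1,h_1+q_2]$, and choose the blue and green vertices, where $q_2$ will be taken very large in terms of $\eta$, $h_1$, $h_2$.

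\emph{Phase 1 (committing red vertices while pruning the second block).} Enumerate the $\binom{h_1}{2}$ pairs $\{r,s\}\subseteq[h_1]$ and maintain a surviving pool $K\subseteq[h_1+1,h_1+q_2]$, initially the whole interval. Processing the pairs one at a time, for the current pair $\{r,s\}$ the constituents $\mathcal A^{rsk}$ with $k\in K$ all have density at least $\eta$, so averaging over $\mathcal P^{rs}$ and over $k\in K$ produces a vertex $P^{rs}_{\mathrm{red}}\in\mathcal P^{rs}$ whose link $L^k_{rs}:=\{(B,G)\in\mathcal P^{rk}\times\mathcal P^{sk}:\{P^{rs}_{\mathrm{red}},B,G\}\in E(\mathcal A^{rsk})\}$ has density at least $\eta/2$ for an $\eta/2$-fraction of $k\in K$; replace $K$ by that sub-pool. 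After all $\binom{h_1}{2}$ rounds the red vertices are fixed and we retain a pool $K^\ast$ with $|K^\ast|\ge(\eta/2)^{\binom{h_1}{2}}q_2$ on which every link $L^k_{rs}$ is dense. Crucially, this step must be strengthened so that Phase 2 goes through: using a dependent-random-choice (co-degree) refinement inside each link --- exactly the device at the heart of the proof of Lemma~\ref{Rembed} --- one arranges that each $L^k_{rs}$ contains a linear-sized ``core'' on its blue side, any few vertices of which have a common neighbourhood of linear size on its green side, and symmetrically.

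\emph{Phase 2 (choosing $\sigma$'s and the blue/green vertices).} Since $q_2$ is large, $|K^\ast|\ge h_2$, so let $\sigma(1)<\cdots<\sigma(h_2)$ be any $h_2$ elements of $K^\ast$. For a fixed $t\in[h_2]$ the constraints to be met --- $(P^{r\sigma(t)}_{\mathrm{blue}},P^{s\sigma(t)}_{\mathrm{green}})\in L^{\sigma(t)}_{rs}$ for all $1\le r<s\le h_1$ --- involve only the classes $\mathcal P^{r\sigma(t)}$, which are disjoint for distinct $t$, so the $h_2$ values of $t$ are handled independently. For one such $t$ one runs a transversal selection, filling the classes $\mathcal P^{1\sigma(t)},\mathcal P^{2\sigma(t)},\dots$ in order and choosing each blue vertex from the relevant cores and each green vertex from the common neighbourhood of the already-chosen blue vertices; the robustness property from Phase 1 keeps these common neighbourhoods of linear size throughout, so the process does not get stuck. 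This produces all blue and green vertices; (iii) holds by construction, and (i), (ii) are immediate.

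\emph{Main obstacle.} The genuine difficulty is Phase 1. Plain density of the links $L^k_{rs}$ does \emph{not} suffice for the Phase 2 selection --- a bipartite graph of density even $1/2$ can have every vertex of one side isolated in one of the two links it must serve --- so Phase 1 must be upgraded to deliver the co-degree/robustness condition uniformly over a still-linear pool $K^\ast$, even though the red vertices being committed live entirely in the first block and one cannot afford to restrict the crossing classes (in this reduced-hypergraph setting only density, not regularity, is available, and restricting a class can destroy density). Managing this interleaving of index-pruning with robust vertex-selection is the technical crux and is carried out along the lines of the proof of Lemma~\ref{Rembed}; it is also what forces $q_1$ and $q_2$ to grow rapidly in $\eta$, $h_1$, $h_2$.
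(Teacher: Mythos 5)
Your reduction ``take $q_1:=h_1$ and $\lambda(r)=r$'' is not valid, and this is a genuine gap, not a technicality. It is true that (i)--(iii) place no constraint on the $\lambda$'s other than (iii), but (iii) is precisely the binding constraint, and the whole reason $q_1$ must be taken large is so that one may \emph{discard} indices in $[q_1]$ for which (iii) cannot be satisfied. Here is a concrete counterexample to your reduction. Take $h_1=3$, $h_2=1$, $\eta\le 1/8$, and build an $\eta$-dense reduced hypergraph with index set $\{1,2,3\}\cup T$ as follows: for every $k\in T$ let $\mathcal A^{12k}$ consist of all triples whose blue vertex lies in a fixed \emph{first half} of $\mathcal P^{1k}$, and let $\mathcal A^{13k}$ consist of all triples whose blue vertex lies in the complementary \emph{second half} of $\mathcal P^{1k}$; take all remaining constituents complete. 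Every constituent has density $\ge 1/8\ge\eta$. Yet for any $\sigma=k\in T$ and any choice of red vertices, the required vertex $B_1=P^{1\sigma}_{\color{blue}{\rm blue}}$ would have to lie simultaneously in the first and second halves of $\mathcal P^{1\sigma}$. So the conclusion fails with $\lambda=[3]$ for every $q_2$, and the lemma becomes false once $q_1$ is fixed at $h_1$.

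The ``dependent random choice'' patch you propose for Phase~1 does not repair this. DRC produces a robust core \emph{inside a single bipartite link} $L^k_{rs}$, but your Phase~2 needs the blue vertex $B_r$ to lie in the cores of \emph{several different links} $L^{\sigma(t)}_{rs}$ for various $s$ (and analogously the green vertex must be a good landing spot across several links). Linear-sized cores in different bipartite graphs over the same class $\mathcal P^{r\sigma(t)}$ need not intersect at all --- precisely what the counterexample above exploits --- and density gives you no leverage to force an intersection. The paper's proof sidesteps this entirely by \emph{not} fixing the first block: it prunes $[q_1]$ (via the nested sets $\mathcal J_i$ in the blue stage and $\mathcal L_i$ in the green stage), so that each newly committed blue or green vertex only needs to serve a surviving sub-pool of indices in $[q_1]$, and the surviving pool is still of size at least $h_1$ at the end. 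In other words, the filtering happens on \emph{both} index blocks, and the interplay between pruning $[q_1]$ and pruning $[q_1+1,q_1+q_2]$ is exactly the mechanism you removed by setting $q_1=h_1$. To repair your argument you would have to restore a large $q_1$ and a greedy pruning of the first block interleaved with the blue/green selections, at which point you would be reproducing the paper's argument rather than simplifying it.
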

\begin{proof}
The proof here follows the idea in the proof of Lemma \ref{Rembed}, arguing by greedily filtering the useful clusters by the pigonhole principle.
Choose $q_1$ and $q_2$ such that
\[\max\{h_1,h_2,1/\eta\}\ll q_1\ll q_2.\]
Our proof contains the following three stages.

\noindent{\bf Choosing $\bm{P^{rs}_{{\rm {\color{red}red}}}}$.} For any three indices $r,s,t$ where $1\leq r<s\leq q_1$ and $1\leq t\leq q_2$, we denote the degree of a vertex $P\in \mathcal{P}^{rs}$ in $\mathcal{A}^{rst}$ by $d_t(P)$, that is, $d_t(P)=|\{(Q,R)\in \mathcal{P}^{rt}\times \mathcal{P}^{st}:\{P,Q,R\}\in \mathcal{A}^{rst}\}|$. We set
\[\mathcal{P}^{rs}_{t,{\rm {\color{red}red}}}=\left\{P\in \mathcal{P}^{rs}:d_t(P)\geq\frac{\eta}{2}\cdot|\mathcal{P}^{rt}||\mathcal{P}^{st}|\right\}.\]
Since $\mathcal{A}$ is $\eta$-dense, we have
\begin{align}
\eta|\mathcal{P}^{rs}||\mathcal{P}^{rt}||\mathcal{P}^{st}|\leq&|E(\mathcal{A}^{rst})|=\sum_{P\in\mathcal{P}^{rs}}d_t(P)=\sum_{P\in\mathcal{P}^{rs}\setminus\mathcal{P}^{rs}_{t,{\rm {\color{red}red}}}}d_t(P)+\sum_{P\in\mathcal{P}^{rs}_{t,{\rm {\color{red}red}}}}d_t(P)\nonumber\\
\leq& \frac{\eta}{2}\cdot|\mathcal{P}^{rs}||\mathcal{P}^{rt}||\mathcal{P}^{st}|+|\mathcal{P}^{rs}_{t,{\rm {\color{red}red}}}||\mathcal{P}^{rt}||\mathcal{P}^{st}|.\nonumber
\end{align}
Therefore, $|\mathcal{P}^{rs}_{t,{\rm {\color{red}red}}}|\geq\frac{\eta}{2}\cdot|\mathcal{P}^{rs}|$.

We determine $P^{rs}_{{\rm {\color{red}red}}}$ for each pair $1\leq r<s\leq q_1$ increasingly with respect to the lexicographical order. Set $\mathcal{I}_0:=[q_1+q_2]\setminus[q_1]$. For the $i$th pair $1\leq r<s\leq q_1$, by double counting, there is an index set $\mathcal{I}_i\subseteq \mathcal{I}_{i-1}$ of cardinality at least $\frac{\eta |\mathcal{I}_{i-1}|}{2}$ and there is an element $P^{rs}_{{\rm {\color{red}red}}}\in \mathcal{P}^{rs}$ satisfying
\[P^{rs}_{{\rm {\color{red}red}}}\in\bigcap_{t\in \mathcal{I}_i}\mathcal{P}^{rs}_{t,{\rm {\color{red}red}}}.\]
Therefore, after $\binom{q_1}{2}$ steps, we eventually choose $P^{rs}_{{\rm {\color{red}red}}}$ for each pair $1\leq r<s\leq q_1$ and obtain an index set $\mathcal{I}:=\mathcal{I}_{\binom{q_1}{2}}\subseteq [q_1+q_2]\setminus[q_1]$ such that
\[P^{rs}_{{\rm {\color{red}red}}}\in\bigcap_{t\in \mathcal{I}}\mathcal{P}^{rs}_{t,{\rm {\color{red}red}}}\]
for all pairs $1\leq r<s\leq q_1$. One remark here is that we may assume $|\mathcal{I}|=h_2$ since otherwise we can proceed with a subset of $\mathcal{I}$ of cardinality $h_2$.

\noindent{\bf Choosing $\bm{P^{rt}_{{\rm {\color{blue}blue}}}}$.} We next choose an index set $\mathcal{J}\subseteq [q_1]$ and determine $P_{{\rm {\color{blue}blue}}}^{rt}$ in $\mathcal{P}^{rt}$ for every $(r,t)\in \mathcal{J}\times \mathcal{I}$. For any pair $1\leq r<s\leq q_1$ and any $t\in \mathcal{I}$, let us define
\[\mathcal{P}^{rt}_{s,{\rm {\color{blue}blue}}}=\left\{Q\in \mathcal{P}^{rt}:d(P^{rs}_{{\rm {\color{red}red}}},Q)\geq \frac{\eta}{4}\cdot|\mathcal{P}^{st}|\right\},\]
where $d(P^{rs}_{{\rm {\color{red}red}}},Q)=|\{R\in \mathcal{P}^{st}:\{P^{rs}_{{\rm {\color{red}red}}},Q,R\}\in \mathcal{A}^{rst}\}|$. Since
\[d_t(P^{rs}_{{\rm {\color{red}red}}})=\sum_{Q\in \mathcal{P}^{rt}}d(P^{rs}_{{\rm {\color{red}red}}},Q)\geq\frac{\eta}{2}\cdot|\mathcal{P}^{rt}||\mathcal{P}^{st}|,\]
the same calculation as above discloses $\mathcal{P}^{rt}_{s,{\rm {\color{blue}blue}}}\geq\frac{\eta}{4}\cdot|\mathcal{P}^{st}|$. Set $\mathcal{J}_0:=[q_1]$ and suppose we proceed with $\mathcal{J}_i$ now. Let $r$ be the smallest index in $\mathcal{J}_i$ such that $P_{{\rm {\color{blue}blue}}}^{rt}$ has not been determined for some $t\in \mathcal{I}$. Denote by $\mathcal{J}'_i$ the set of all elements in $\mathcal{J}_i$ that are larger than $r$. By double counting, there exists a subset $\mathcal{J}''_i$ of $\mathcal{J}'_i$ of cardinality at least $\frac{\eta|\mathcal{J}'_i|}{4}$ and there is an element $Q$ in $\mathcal{P}^{rt}$ such that
\[Q\in\bigcap_{s\in \mathcal{J}''_i}\mathcal{P}^{rt}_{s,{\rm {\color{blue}blue}}}.\]
\noindent Let $P_{{\rm {\color{blue}blue}}}^{rt}=Q$ and set $\mathcal{J}_{i+1}=(\mathcal{J}_i\setminus\mathcal{J}'_i)\cup\mathcal{J}''_i$. Finally we get an index set $\mathcal{J}\subseteq [q_1]$ and choose $P_{{\rm {\color{blue}blue}}}^{rt}$ for any $(r,t)\in \mathcal{J}\times \mathcal{I}$ such that
\[P_{{\rm {\color{blue}blue}}}^{rt}\in\bigcap_{s\in \mathcal{J},s>r}\mathcal{P}^{rt}_{s,{\rm {\color{blue}blue}}}.\]

\noindent{\bf Choosing $\bm {P^{st}_{{\rm {\color{green}green}}}}$.} We further shrink the index set $\mathcal{J}$ to a new one $\mathcal{L}$ and determine $P_{{\rm {\color{green}green}}}^{st}$ in $\mathcal{P}^{st}$ for every $(s,t)\in \mathcal{L}\times \mathcal{I}$. When choosing $P_{{\rm {\color{green}green}}}^{st}$, we shall ensure that for any pairs $r<s$ in $\mathcal{L}$ and any $t\in\mathcal{I}$, $\{P_{{\rm {\color{red}red}}}^{rs},P_{{\rm {\color{blue}blue}}}^{rt},P_{{\rm {\color{green}green}}}^{st}\}$ is a hyperedge in $\mathcal{A}^{rst}$, and thus Property {\rm(iii)} will be satisfied. For this purpose, we set
\[
\mathcal{P}^{st}_{r,{\rm {\color{green}green}}}=\{R\in \mathcal{P}^{st} : \{P_{{\rm {\color{red}red}}}^{rs},P_{{\rm {\color{blue}blue}}}^{rt},R\} \in E(\mathcal{A}^{rst})\}
\]
for any pair $r<s$ in $\mathcal{J}$ and any $t\in\mathcal{I}$. Then $|\mathcal{P}^{st}_{r,{\rm {\color{green}green}}}|\geq \frac{\eta}{4}\cdot|\mathcal{P}^{st}|$ follows from our previous choices.

 Set $\mathcal{L}_0:=\mathcal{J}$ and suppose we proceed with $\mathcal{L}_i$ now. Consider the largest index $s\in \mathcal{L}_i$ for which there exists an index $t\in \mathcal{I}$ with $P^{st}_{{\rm {\color{green}green}}}$ undetermined. Denote by $\mathcal{L}'_i$ the set of all smaller elements than $s$ in $\mathcal{L}_i$. By double counting, there exists a subset $\mathcal{L}''_i$ of $\mathcal{L}'_i$ of cardinality at least $\frac{\eta|\mathcal{L}'_i|}{4}$ and there is an element $R$ in $\mathcal{P}^{st}$ such that
\[R\in\bigcap_{r\in \mathcal{L}''_i}\mathcal{P}^{st}_{r,{\rm {\color{green}green}}}.\]
\noindent Let $P_{{\rm {\color{green}green}}}^{st}=R$ and set $\mathcal{L}_{i+1}=(\mathcal{L}_i\setminus\mathcal{L}'_i)\cup\mathcal{L}''_i$. Finally we obtain an index set $\mathcal{L}\subseteq \mathcal{J}$ and choose $P_{{\rm {\color{green}green}}}^{st}$ for any $(s,t)\in \mathcal{L}\times \mathcal{I}$ such that
\[P_{{\rm {\color{green}green}}}^{st}\in\bigcap_{r\in \mathcal{L},r<s}\mathcal{P}^{st}_{r,{\rm {\color{green}green}}}.\]

Without loss of generality, we may assume $\mathcal{L}=[h_1]$ and $\mathcal{I}=[q_1+h_2]\setminus[q_1]$. Now it is plain that the indices $\lambda(r)=r$ for $r\in[h_1]$ and $\sigma(t)=q_1+t$ for $t\in [h_2]$ are as desired.
\end{proof}

\noindent{\textbf{Remark.}} In the subsequent proof we also need another statement of Lemma~ \ref{biembed} as follows.

\emph{
Given $\eta>0$ and $h_1,h_2\in \mathbb{N}$, there exist integers $q_1$ and $q_2$ such that the following holds. If $\mathcal{A}$ is an $\eta$-dense reduced hypergraph with index set $[q_1+q_2]$, vertex class $\mathcal{P}^{ij}$ and constituents $\mathcal{A}^{ijk}$, then there are indices $\lambda(1)<\cdots<\lambda(h_1)$ in $[q_1]$ and $\sigma(1)<\cdots<\sigma(h_2)$ in $[q_1+1,q_1+q_2]$ satisfying that
\begin{itemize}
  \item [{\rm (i)}] for each pair $1\leq s<t\leq h_2$ there is a vertex $P^{\sigma (s)\sigma (t)}_{{\rm {\color{green}green}}}$;
  \item [{\rm (ii)}] for any $1\leq r\leq h_1$ and any $1\leq t\leq h_2$ there are two vertices $P^{\lambda(r)\sigma(t)}_{{\rm {\color{red}red}}}$ and $P^{\lambda(r)\sigma(t)}_{{\rm {\color{blue}blue}}}$;
  \item [{\rm (iii)}] for every triple indices $1\leq r\leq h_1$ and $1\leq s<t\leq h_2$ the three vertices $P^{\lambda(r)\sigma (s)}_{{\rm {\color{red}red}}}$, $P^{\lambda(r)\sigma(t)}_{{\rm {\color{blue}blue}}}$ and $P^{\sigma (s)\sigma(t)}_{{\rm {\color{green}green}}}$ form a hyperedge in $\mathcal{A}^{\lambda(r)\sigma (s)\sigma(t)}$.
\end{itemize}
}


\begin{proof}[Proof of the backward implication of Theorem \ref{equal}]Suppose a $3$-graph $F$ on $f$ vertices satisfies properties \rm{(i)} and \rm{(ii)} in Theorem \ref{equal} and $p,\alpha>0$ are given. Let $\mathcal{P}=\{X,Y,\{v^*\}\}$ be a partition of $V(F)$ as in Theorem \ref{equal} and let $\{v_1,\dots,v_f\}$ be an enumeration of $V(F)$ as in Lemma~\ref{alter}. By Theorem \ref{k-graph}, we only need to check that $F\in \cover_3$, i.e., there exist constants $n_0,\mu>0$ such that for $n\ge n_0$ every vertex $w$ in an $(n,p,\mu,\alpha)$ $3$-graph $H$ is contained in a copy of $F$.

Here is a brief outline before we proceed the formal proof.
We first apply Lemma \ref{clean} to $H$ and obtain a subhypergraph $\hat H$ and a cluster $V_0$ consisting of clones of $w$.
Then our goal is to apply the embedding lemma to an appropriate regular and dense $(f,3)$-complex with a cluster being $V_0$.
By the minimum degree condition, we can choose a large candidate set of clusters, labeled by $[2C]$, with a bipartite structure $K_{C, C}$, that is, each pair with one from $[C]$ and the other from $[C+1, 2C]$ forms a regular triple together with $V_0$.
Then we greedily filter the candidate set of clusters by Lemmas~\ref{Rembed} and~\ref{biembed} and obtain an appropriate $(f,3)$-complex to which we can apply Lemma~\ref{EL}.
In particular, Lemma~\ref{Rembed} is used to filter clusters for embedding edges inside each of the partition and Lemma~\ref{biembed} is used to filter clusters for embedding edges between the bipartition.

\medskip
Suppose $f_1=|X|$ and $f_2=|Y|$. Set $d_3=\min\{\frac{p}{9},\frac{\alpha}{9}\}$ and $\eta=\frac{d_3}{2}$. Let 
\[1/C\ll 1/M_1\ll 1/M_2\ll 1/m_2\ll 1/m_1\ll \max\{f_1,f_2,\lceil2/d_3\rceil\}\]
and $m_1,m_2,M_1,M_2,C$ are all integers. Choose a sufficiently large integer $m$ such that any graph on $m$ vertices with at least $\frac{\alpha m^2}{4}$ edges contains a complete bipartite graph $K_{C,C}$ and $m$ satisfies the requirement of Lemma~\ref{clean}.
Plugging $F$ and $d_3$ into Lemma \ref{EL} we get a constant $\delta_3>0$, a function $\delta_2:\mathbb{N}\longmapsto(0,1]$ and a function $N:\mathbb{N}\longmapsto\mathbb{N}$. Obviously we may assume that $\delta_3\leq \min\{\frac{p}{8},\frac{\alpha}{8}\}$, $\delta_2(\ell)\ll \ell^{-1}$ and $N$ is increasing.
Choose $\mu>0$ to be sufficiently small, and an integer $n_0$ to be sufficiently large.

Now let $H=(V,E)$ be an $(n,p,\mu,\alpha)$ $3$-graph with $n\ge n_0$.
Given a vertex $w\in V$, in order to show that $w$ lies in a copy of $F$, we first apply Lemma \ref{clean} with $d_3,\delta_3,\alpha,m$ and $\delta_2(\cdot)$ to $H_w$ to obtain
\begin{itemize}
  \item a subhypergraph $\hat{H}=(\hat{V},\hat{E})\subseteq H_w$;
  \item a vertex partition $V_0\dot\cup V_1\dot\cup\cdots\dot\cup V_m=\hat{V}$ with $V_0\subseteq V_w$ and $\bigcup_{i=1}^{m}V_i\subseteq V$;
  \item an integer $\ell\leq T_0$;
  \item partitions $\mathcal{P}^{ij}=\{P^{ij}_{a}=(V_i\dot\cup V_j,E^{ij}_{a}):1\leq a\leq \ell\}$ of $K(V_i,V_j)$ for all integers $i,j$ with $0\leq i<j\leq m$
\end{itemize}
satisfying Properties {\rm (i)}--{\rm(v)} from Lemma \ref{clean}.
In particular, by Property {\rm(v)}, there is a subset of $[m]$ of cardinality $2C$, say $[2C]$, such that for all $i\in [C]$ and $j\in [2C]\setminus[C]$, $d(\hat{H}|P^{0ij}_{abc})\geq d_3$ holds for at least one triple $\{a,b,c\}\in [\ell]^3$. In this situation, our reduced hypergraph $\mathcal{A}$ has index set $\{0\}\cup[2C]$, vertex class $\mathcal{P}^{ij}$ and a triple $\{P^{ij}_a,P^{ik}_b,P^{jk}_c\}$ is defined as an edge of the constituent $\mathcal{A}^{ijk}$ if and only if $d(\hat{H}|P^{ijk}_{abc})\geq d_3$. As shown in the proof of Theorem 1.2 in \cite [Section 3]{Reiher2018Hypergraphs}, our reduced hypergraph $\mathcal{A}$ is $\eta$-dense restricted to the index set $[2C]$.

In order to use Lemma \ref{EL} to find a copy of $F$ containing $w$, we need to find a regular $f$-partite graph (with a vertex class being $V_0$) for $F$.
For this purpose, we first apply Lemma~\ref{Rembed} with the index set $[C]$ to find an index subset $\mathcal{I}_1$ of $[C]$ with $|\mathcal{I}_1|=M_1$ such that Property {\rm(ii)}  from Lemma~\ref{Rembed} is satisfied.
Then we apply Lemma \ref{Rembed} again with the index set $[2C]\setminus[C]$ to find an index subset $\mathcal{I}_2$ of cardinality $M_2$ in $[2C]\setminus[C]$ satisfying Property {\rm(ii)} from Lemma~\ref{Rembed}.
Next we apply Lemma~\ref{biembed} (statement in the remark) with $h_1:=m_1$, $h_2:=m_2$ and the set $\mathcal{I}_1\cup\mathcal{I}_2$ to find a subset $\mathcal{J}_1$ of $\mathcal{I}_1$ of cardinality $m_1$ and a subset $\mathcal{J}_2$ of $\mathcal{I}_2$ of cardinality $m_2$ such that Properties {\rm(i)}--{\rm(iii)} from Lemma \ref{biembed} are satisfied.
Finally, we apply Lemma \ref{biembed} again with $h_1:=f_1$, $h_2:=f_2$ and the index set $\mathcal{J}_1\cup\mathcal{J}_2$ to find a subset $\mathcal{X}$ of $\mathcal{J}_1$ of cardinality $f_1$ and a subset $\mathcal{Y}$ of $\mathcal{J}_2$ of cardinality $f_2$ satisfying Properties {\rm(i)}--{\rm(iii)} from Lemma \ref{biembed}. Without loss of generality, we may assume that $\lambda(1)=0$, $\mathcal{X}=\{\lambda(2),\dots,\lambda(f_1+1)\}$ and $\mathcal{Y}=\{\lambda(f_1+2),\dots,\lambda(f)\}$ with $\lambda(1)<\dots<\lambda(f)$.

To conclude the proof, we show that there exists a $w_i\in V_{\lambda(i)}$ for each $i\in [f]$ which altogether support a copy of $F$ in $\hat{H}$ (note that $w_1$ actually equals to $w$). We construct an auxiliary $3$-graph $\hat{F}$ from $F$ by deleting the vertex $v^*$ and adding $|N_F(v^*)|$ new vertices such that each of them forms a hyperedge of $\hat{F}$ with a distinct pair in $N_F(v^*)$.
For any $\{v_i,v_j,v_k\}\in \cup_{i=0}^{3}F_i$ with $i<j<k$, by our choices of $\mathcal{X}$ and $\mathcal{Y}$, we have $d(\hat{H}|P^{\lambda(i)\lambda(j)\lambda(k)}_{{\rm {\color{red}red},{\color{blue}blue},{\color{green}green}}})\geq d_3$.
For any new vertex $u\in V(\hat{F})$ and $\{u,v_i,v_j\}\in \hat{F}$ with $i<j$, since $N_F(v^*)\subseteq X\times Y$, we know that $v_i\in X$ and $v_j\in Y$.
By the choices of $\mathcal{X}$ and $\mathcal{Y}$, we conclude that $\{\lambda(i),\lambda(j)\}$ is a good pair for $V_0$, which means that $d(\hat{H}\mid P^{0\lambda(i)\lambda(j)}_{abc})\geq d_3$ for some good polyad $P^{0\lambda(i)\lambda(j)}_{abc}$. Therefore, since $\partial F_{v^*}$, $\partial F_0$, $\partial F_1$, $\partial F_2$ and $\partial F_3$ are pairwise disjoint, by Lemma \ref{EL}, we can embed $\hat{F}$ such that $v_i$ is embedded into $V_{\lambda(i)}$ for $2\leq i\leq f$ and all new vertices are embedded into $V_0$. Since every vertex from $V_0$ is a copy of $w$, we have embedded $F$ into $H$ with mapping $v^*$ to $w$.
\end{proof}

\section{Avoiding $F$-factors}

In this section, we prove the following lemma and use it to derive the forward implications of Theorems~\ref{k-partite} and~\ref{equal}.



\begin{lemma}\label{k-construction}
For $k\geq 3$, if a $k$-graph $F\in \factor_k$, then there exists a vertex $v^*$ and a partition $\mathcal{P}=\{X_1,X_2,\dots ,X_{k-1},\{v^*\}\}$ of $V(F)$ such that $N_F(v^*)\subseteq X_1\times \cdots \times X_{k-1}$ and for every two edges $e, e'\in E(F)$, if $|e\cap e'| \ge 2$, then ${\bf i} _{\mathcal{P}}(e)={\bf i} _{\mathcal{P}}(e')$.
\end{lemma}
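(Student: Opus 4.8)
We prove the contrapositive, so assume that there is \emph{no} pair $(v^*,\mathcal P)$ as in the statement; I will produce, for a fixed $p\in(0,1)$, a fixed small $\alpha>0$, and \emph{every} $\mu>0$ and $n_0\in\mathbb N$, an $(n,p,\mu,\alpha)$ $k$-graph $H$ with $v(F)\mid n$, $n\ge n_0$, and no $F$-factor; this gives $F\notin\factor_k$. Since $N_F(v^*)\subseteq X_1\times\cdots\times X_{k-1}$ can hold only when $N_F(v^*)$ is $(k-1)$-partite as a $(k-1)$-graph, and since any $(k-1)$-colouring of the vertices of $N_F(v^*)$ making its edges transversal extends (by placing the remaining vertices of $V(F)\setminus\{v^*\}$ arbitrarily) to a partition of $V(F)$, the failure of the statement falls into two regimes: (A) no vertex of $F$ has $(k-1)$-partite link; (B) some vertex $v^*$ has $(k-1)$-partite link, but for every such $v^*$ and every partition $\mathcal P$ realising $N_F(v^*)\subseteq X_1\times\cdots\times X_{k-1}$ there are $e,e'\in E(F)$ with $|e\cap e'|\ge2$ and $\mathbf i_{\mathcal P}(e)\ne\mathbf i_{\mathcal P}(e')$.

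\emph{Regime (A).} I would use a local construction. Take an arbitrarily quasi-random $k$-graph $H_0$ on $V=[n]$ of density $p$ with all degrees $\Theta(n^{k-1})$ (for instance one built from a random $\mathbb Z_q$-colouring of $[V]^2$ via a character/sum rule, whose $(p,\mu)$-denseness for every $\mu>0$ is a by-now-standard estimate for random colourings of $E(K_n)$). Fix one vertex $w$ and an equipartition $V\setminus\{w\}=Y_1\cup\cdots\cup Y_{k-1}$, delete all edges of $H_0$ through $w$, and re-insert every $k$-set $\{w\}\cup\{y_1,\dots,y_{k-1}\}$ with $y_i\in Y_i$. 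This alters only $O(n^{k-1})=o(n^k)$ edges, so $H$ stays $(p,\mu)$-dense with $\delta_1(H)=\Omega(n^{k-1})$, while the link of $w$ is now a subgraph of the complete $(k-1)$-partite $(k-1)$-graph on $Y_1,\dots,Y_{k-1}$. Any copy of $F$ covering $w$ would supply a vertex of $F$ with $(k-1)$-partite link, impossible in this regime; hence $H$ has no $F$-cover, a fortiori no $F$-factor.

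\emph{Regime (B).} I would use a divisibility construction in the spirit of the Lenz--Mubayi parity construction for $K_{2,2,2}\notin\factor_3$ in \cite{Lenz2016Perfect} and of the random-colouring constructions in \cite{arajo2020,HSW21}: a small, density-preserving modification of an arbitrarily quasi-random base $k$-graph, equipped with an auxiliary equipartition $V=U_1\cup\cdots\cup U_r$ (with $r$ and the precise edge-rule tailored to the linear algebra of $F$), designed so that every copy of $F$ in $H$ has index vector, with respect to $\{U_1,\dots,U_r\}$, in a prescribed finite set $\mathcal S\subseteq\mathbb Z^r$, and conversely each vector of $\mathcal S$ is realised by $\Omega(n^{v(F)})$ copies. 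An $F$-factor would then force $(|U_1|,\dots,|U_r|)=\sum_{\text{copies}}\mathbf i(\text{copy})\in L:=\langle\mathcal S\rangle$. The idea is to exploit the witnesses $e_0,e_0'$ of the failure of the statement (edges with $|e_0\cap e_0'|\ge2$ whose index vectors with respect to some valid $\mathcal P^*$ differ) to choose the edge-rule so that $L$ is a \emph{proper} sublattice of $\mathbb Z^r$; then one picks the class sizes $|U_i|$ — adjusting by $\pm1$ while keeping $\sum_i|U_i|=n$ divisible by $v(F)$ and $n\ge n_0$ — so that $(|U_1|,\dots,|U_r|)\notin L$, whence $H$ has no $F$-factor, while the modification keeps $H$ $(p,\mu)$-dense with $\delta_1(H)=\Omega(n^{k-1})$ for every $\mu>0$.

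The hard part is the heart of Regime (B): converting the purely combinatorial failure of ``$|e\cap e'|\ge2\Rightarrow\mathbf i_{\mathcal P}(e)=\mathbf i_{\mathcal P}(e')$'' (for \emph{every} valid $(v^*,\mathcal P)$) into the algebraic assertion that the index-vector lattice $L$ of copies of $F$ in the construction is proper. Concretely one has to (i) choose $r$, the partition, and the edge-rule so that every copy of $F$ in $H$ is forced to be ``aligned'' with $\mathcal P^*$ in the relevant coordinates — so it cannot acquire a ``full'' index vector, yielding a nontrivial linear relation among the $|U_i|$ — and simultaneously ensure that no copy escapes by aligning with a competing partition; and (ii) check that the random base $k$-graph together with this modification is $(p,\mu)$-dense with minimum degree $\Omega(n^{k-1})$ for arbitrarily small $\mu$, which is a careful but routine Fourier/moment computation for random colourings of $E(K_n)$. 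Regime (A) is soft once the link construction is in place, and the resulting lemma then feeds directly into the forward implications of Theorems~\ref{k-partite} and~\ref{equal}.
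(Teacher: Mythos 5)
Your proposal takes the contrapositive and tries to manufacture an $(n,p,\mu,\alpha)$ $k$-graph with no $F$-factor from the \emph{failure} of the conclusion. The paper argues in the forward direction and never needs to analyse failure at all: it builds one explicit probability distribution on $k$-graphs that, with positive probability, produces an $H$ which is $(p^*,\mu)$-dense with $\delta_1(H)=\Omega(n^{k-1})$ \emph{and} has the following rigidity built in. Fix a partition $\mathcal Q=\{V_1,\dots,V_{k-1},\{z\}\}$ of $[n]$ and a random colouring $\phi$ of $E(K_n)$ (pairs) by colours $c_0,\dots,c_r$ where $c_0$ is reserved for the transversal type $(1,\dots,1)$ and $c_1,\dots,c_r$ index the remaining $k$-vectors with last coordinate $0$; put a $k$-set $e$ in $H$ iff all $\binom k2$ pairs inside $e$ have the colour $c_j$ matching the index vector $\mathbf i_{\mathcal Q}(e)=\mathbf i_j$. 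Then automatically $N_H(z)\subseteq V_1\times\cdots\times V_{k-1}$, and any two edges $e,e'$ with $|e\cap e'|\ge2$ share a pair and hence a colour, forcing $\mathbf i_{\mathcal Q}(e)=\mathbf i_{\mathcal Q}(e')$. Since $F\in\factor_k$, this $H$ has an $F$-factor; the copy of $F$ covering $z$ inherits the property under the restricted partition, which is exactly the conclusion of the lemma. No case split, no lattice, no characterisation of failure.

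The concrete gap in your write-up is Regime~(B), and you essentially acknowledge it. You must show that whenever \emph{every} valid pair $(v^*,\mathcal P)$ admits witnesses $e,e'$ with $|e\cap e'|\ge2$ and $\mathbf i_{\mathcal P}(e)\ne\mathbf i_{\mathcal P}(e')$, one can design an edge-rule for a quasi-random $H$ whose index-vector lattice of $F$-copies is proper, \emph{and} pick class sizes outside that lattice. But the hypothesis is a universally quantified, per-partition statement; nothing in it hands you a single global colouring/partition scheme that simultaneously (i) prevents every copy of $F$ from aligning with some \emph{other} valid partition that you did not build against, and (ii) keeps the lattice proper. You never specify $r$, the edge-rule, nor verify that the resulting $\mathcal S$ generates a proper sublattice; this is precisely the place where an argument could fail (a competing vertex $v^{**}$ with $(k-1)$-partite link but a different partition could let $F$ ``escape'' with a full-rank index vector). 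The paper's direct construction sidesteps all of this by design: since you are proving a necessary condition on $F\in\factor_k$, it is much cheaper to build one canonical witness $H$ and \emph{use} the $F$-factor hypothesis than to classify obstructions. Regime~(A) of your proposal is fine (a local re-wiring of one vertex's link to make it $(k-1)$-partite, killing $F$-covers of that vertex when no vertex of $F$ has $(k-1)$-partite link), but it is a small corner of the problem, and as written the proposal does not constitute a proof.
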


To see the forward implications of Theorems~\ref{k-partite} and~\ref{equal}, fix $F\in \factor_k$ and take $v^*$ and the partition $\mathcal{P}=\{X_1, \dots, X_{k-1}, \{v^*\}\}$ given by Lemma~\ref{k-construction}.
Fix two edges $e, e'$ such that $v^*\in e$ and $v^*\notin e'$, because ${\bf i} _{\mathcal{P}}(e)\neq {\bf i} _{\mathcal{P}}(e')$, we infer that $|e\cap e'|\le 1$, implying Theorem~\ref{k-partite}.
For Theorem~\ref{equal}, note that Theorem~\ref{equal}~\ref{item:i} has been explained in the introduction.
For~\ref{item:ii}, applying Lemma~\ref{k-construction} with $k=3$, we have $N_F(v^*)\subseteq X_1\times X_2$ and for any $x\in X_1$ and $y\in X_2$, if $N(x)\cap N(y)\neq \emptyset$, then there are two edges $e, e'\in E(F)$ with $x\in e$, $y\in e'$ and $|e\cap e'|=2$. By the property of $\mathcal P$,  we must have ${\bf i} _{\mathcal{P}}(e)={\bf i} _{\mathcal{P}}(e')$, which is impossible as  $e\setminus\{x\}=e \setminus \{y\}$ and $x\in X_1$ and $y\in X_2$. Similarly, we also have $N(x)\cap N(v^*)=\emptyset$ and $N(y)\cap N(v^*)=\emptyset$.

At the first sight it seems that to generalize Theorem~\ref{equal}~\ref{item:ii} to $k$-graphs, we should extend the partition to $k$ parts (still with a specified vertex $v^*$) and require that vertices from different parts do not share any links.
We remark here that the property of $\mathcal P$ from Lemma~\ref{k-construction} is stronger than this ``direct'' generalization.

\begin{proof}[Proof of Lemma~\ref{k-construction}]
Given $k\geq 3$, let $F$ be a $k$-graph with $F\in \factor_k$ and $f:=v(F)$.

For $n\in f\mathbb{N}$, define a probability distribution $H(n)$ on $k$-graphs of order $n$ as follows.
Let $K:=K_n$ be the complete graph on $n$ vertices, and $\mathcal{Q}=\{V_1,\dots, V_{k-1},\{z\}\}$ be a partition of $V(K)$ such that $\textbf{i} _{\mathcal{Q}}(V(K))=(n_1,\dots, n_{k-1},1)$ with $n_i\ge n/k$.
Write $r:=r(k)=\binom{2k-2}{k}$.
Let $I=\{\textbf{i}_1,\dots, \textbf{i}_{r}\}$ be an enumeration of the $k$-vectors whose coordinates are nonnegative and sum to $k$ on $\mathbb{Z}^{k}$ with last digit 0 and write $\textbf{i}_0:=(1,\dots, 1)\in \mathbb Z^k$.
Define a random $(r+1)$-coloring $\phi: E(K) \longmapsto \{c_0,c_1,\dots ,c_{r}\}$ where each color is assigned to an edge of $K$ with probability $1/r$ independently.
We say a clique in $K$ is {\emph {$c_j$-colored}} if all edges of this clique have color $c_j$, where $0\le j\le r$.
Let now the vertex set of $H(n)$ be $V(K)$ and include a $k$-set $e\subset V(K)$ in $E(H(n))$ if $\textbf{i} _{\mathcal{Q}}(e)=\textbf{i}_{j}$ with $0\le j\le r$ and $K[e]$ is a $c_{j}$-colored clique.
%
Therefore, for each $k$-set $e$ not containing $z$, $e\in E(H(n))$ with probability $p^*=(r+1)^{-\binom{k}{2}}$, and for each vertex $v\in V(H(n))$, the expectation of its degree is
\[
\deg_{H(n)} (v)\ge \min \left\{p^*\binom{n-2}{k-1}, p^*\prod^{k-1}_{i=1}n_i \right\} = p^*\prod^{k-1}_{i=1}n_i \ge \frac{p^*}{k^{k-1}}n^{k-1}.
\]
By concentration inequalities (e.g. Janson's inequality) and the union bound, for sufficiently large $n$,
there exists a $k$-graph $H\in H(n)$ such that $H$ is $(p^*,\mu)$-dense with
$\delta _1(H)\ge (p^*/k^{k}) n^{k-1}$. Moreover, we have
\begin{itemize}
\item[$(\dagger)$] $N_H(z)\subseteq V_1\times \cdots \times V_{k-1}$ and for every two edges $e, e'\in E(H)$,
if $|e\cap e'| \ge 2$, then $K[e]$ and $K[e']$ must have the same color, which implies that $\textbf{i} _{\mathcal{Q}}(e)=\textbf{i} _{\mathcal{Q}}(e')$.
\end{itemize}


Next we show the lemma.
Note that $H$ has an $F$-factor because $F\in \factor_k$, $n\in f\mathbb N$, and that $H$ is an $(n,p^*,\mu,p^*/k^{k})$ $k$-graph.
Therefore, $z$ is contained in a copy $F'$ of $F$ in $H$.
Now, as $F'$ is a subgraph of $H$, the property $(\dagger)$ is passed from $H$ to $F'$ with the partition $\mathcal Q=\{V_1,\dots, V_{k-1}, \{z\}\}$ replaced by $\{W_1,\dots, W_{k-1}, \{z\}\}$, where $W_i=V_i\cap V(F')$, and further passed to $F$ (which creates a partition of $V(F)$ as in the statement of the lemma) as $F'$ is isomorphic to $F$.
\end{proof}

\section{Concluding remarks}
In this paper, we studied Problem~\ref{prob} and reduced it to the $F$-cover problem.
Furthermore, we completely solved this problem for $k=3$ and gave an explicit characterization of such $F$ in Problem~\ref{prob}.
For $k\ge 4$, we believe the conditions in Lemma~\ref{k-construction} together with $\pi_{\points}(F)=0$ give a characterization for $F$ in Problem~\ref{prob} and put forward the following conjecture.

\begin{conj}\label{k-equal}
For $k\ge 4$, a $k$-graph $F\in \factor_k$ if and only if it satisfies the following properties.
\begin{itemize}
  \item[$(i)$] $\pi_{\points}(F)=0$;
  \item[$(ii)$] there exists a vertex $v^*$ and a partition $\mathcal{P}=\{X_1,X_2,\dots ,X_{k-1},\{v^*\}\}$ of $V(F)$ such that $N_F(v^*)\subseteq X_1\times \cdots \times X_{k-1}$ and for every two edges $e, e'\in E(F)$, if $|e\cap e'| \ge 2$, then ${\bf i} _{\mathcal{P}}(e)={\bf i} _{\mathcal{P}}(e')$.
\end{itemize}
\end{conj}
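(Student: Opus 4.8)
The plan is to follow the two-step strategy that yielded Theorems~\ref{k-partite} and~\ref{equal}. The necessity of $(i)$ and $(ii)$ is already available: as noted in the introduction, $\factor_k\subseteq\cover_k$ and every member of $\cover_k$ has $\pi_{\points}(F)=0$, which gives $(i)$; and Lemma~\ref{k-construction}, stated for all $k\ge 3$, supplies precisely $(ii)$. So the whole content of Conjecture~\ref{k-equal} is the sufficiency direction, and by Theorem~\ref{k-graph} it reduces to showing that any $k$-graph $F$ obeying $(i)$ and $(ii)$ lies in $\cover_k$; that is, in every $(n,p,\mu,\alpha)$ $k$-graph $H$ each vertex $w$ is covered by a copy of $F$ sending $v^*$ to $w$.

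For that I would run the regularity argument of Section~4, promoted from $3$-graphs to $k$-graphs. First, clone $w$ $n$ times to form $H_w$, apply the $k$-graph regularity lemma (Theorem~\ref{Reg-lem}), clean the partition by discarding non-$d$-useful edges (Lemma~\ref{useful-edge}), and locate a clone-cluster $V_0\subseteq V_w$ whose link is dense and regular against many remaining clusters. The organizing device must be the $k$-uniform analogue of the \emph{reduced hypergraph} used here for $k=3$: a $(k{-}1)$-uniform ``reduced $k$-graph'' on the cells whose edges record the dense, regular polyads, together with a $k$-uniform embedding lemma (the analogue of Lemma~\ref{EL}) converting a dense regular $(f,k)$-complex into a copy of $F$. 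Building these two tools is routine in spirit but has not been written down.

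The combinatorial heart is a generalization of Theorem~\ref{0} to $k$-graphs: a characterization of the $k$-graphs with $\pi_{\points}(F)=0$ by the existence of an ordering $v_1,\dots,v_f$ of $V(F)$ and a colouring of $\partial F$ by colours $1,\dots,k$ (one per ``position'') such that every edge $\{v_{i_1},\dots,v_{i_k}\}$ with $i_1<\cdots<i_k$ receives colours $1,2,\dots,k$ in order on $\{v_{i_2},\dots,v_{i_k}\},\{v_{i_1},v_{i_3},\dots,v_{i_k}\},\dots$; one then needs the multicolour Ramsey-type statement for $\eta$-dense reduced $k$-graphs (the common generalization of Lemmas~\ref{Rembed} and~\ref{biembed}) locating the colour-coherent pattern required to embed $F$. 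Granting this, one finishes as in Lemma~\ref{alter}: re-order $V(F)$ with $v^*$ first, then an enumeration of $X_1$, then $X_2$, \dots, then $X_{k-1}$; group $\partial F$ by the index vector $\mathbf{i}_{\mathcal P}(e)$, which by $(ii)$ is constant on any two edges sharing at least two vertices; verify that the resulting pieces have pairwise-disjoint shadows, so the colourings of the pieces assemble without conflict; and handle the edges through $v^*$ by the ``bipartite'' cluster-filtering already used for $k=3$, which applies because $N_F(v^*)\subseteq X_1\times\cdots\times X_{k-1}$ and, by $(ii)$, no element of such an edge interacts conflictingly with the rest of $F$.

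The main obstacle is the first bullet of that combinatorial heart: no $k$-uniform analogue of Theorem~\ref{0} is currently known, and its proof for $k=3$ in~\cite{Reiher2018Hypergraphs} rests on a delicate link-graph analysis that does not obviously iterate, so a genuinely new idea seems to be needed there. A second, lesser difficulty is bookkeeping: for $k\ge 4$ the index vector $\mathbf{i}_{\mathcal P}(e)$ takes many values sharing a common support pattern, so the decomposition of $\partial F$ into pairwise-disjoint pieces and the associated Ramsey filtering must be organized with more care than the five-class split $\partial F_{v^*},\partial F_0,\dots,\partial F_3$ that sufficed for $3$-graphs. With those two ingredients in place, the regularity-plus-absorption scheme of this paper delivers $F\in\cover_k$, hence $F\in\factor_k$ by Theorem~\ref{k-graph}, proving Conjecture~\ref{k-equal}.
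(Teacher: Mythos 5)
Conjecture~\ref{k-equal} is, as its label indicates, stated in the paper as an open conjecture; the paper supplies no proof, so there is nothing of the paper's to compare your attempt against. Your submission is a roadmap rather than a proof, and to your credit you say so explicitly. The roadmap is accurate and matches the paper's own outlook. The necessity of (i) and (ii) really is available for all $k\ge 3$: property (i) follows from $\factor_k\subseteq\cover_k$ and the observation in the introduction that every $F\in\cover_k$ has $\pi_{\points}(F)=0$, and property (ii) is exactly the conclusion of Lemma~\ref{k-construction}. By Theorem~\ref{k-graph} the outstanding content is then to show that (i) and (ii) together force $F\in\cover_k$, which is where the proof of Theorem~\ref{equal} would need to be lifted from $3$-graphs to $k$-graphs.

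The obstruction you isolate is the genuine one. A $k$-uniform analogue of Theorem~\ref{0} (the Reiher--R\"odl--Schacht ordering-and-colouring characterization of $\pi_{\points}(F)=0$) is not known, and without it there is no analogue of Lemma~\ref{alter} to produce a partition-compatible colouring of the relevant shadows, nor a target structure for the Ramsey-type filtering lemmas (the higher-uniformity analogues of Lemmas~\ref{Rembed} and~\ref{biembed}) to locate inside the reduced hypergraph. The paper's concluding remarks acknowledge exactly this: the authors write that a solution of Problem~\ref{prob} for $k\ge 4$ via their approach ``probably will yield a generalization of Theorem~\ref{0} to $k$-graphs,'' i.e.\ they regard that characterization as the missing ingredient rather than a routine extension. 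Two smaller items you flag as ``routine in spirit but not written down'' --- a $k$-uniform embedding lemma in the style of Lemma~\ref{EL} and the correct notion of $(k-1)$-uniform reduced hypergraph --- are plausibly tractable, but they are downstream of the main gap. In short, you have correctly reduced the conjecture to the real open problem and have not overlooked any easier route; this is the right assessment, not a complete proof.
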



\subsection{Other quasi-randomness conditions}
It is natural to consider the $F$-factor problem under other (stronger) quasi-randomness and degree conditions.
In particular, Lenz and Mubayi~\cite{LenzPacking2} gave a notion of quasi-randomness which is sufficient to force $F$-factor for arbitrary $k$-graph $F$.
Our discussion below will be based on the notions of quasi-randomness from \cite{Reiher_2017}.

Given a finite set $V$ and a set $S\subseteq [k]$, let $V^S$ be the set of all functions from $S$ to $V$. For convenience, we identify the Cartesian power $V^k$ with $V^{[k]}$ by regarding any $k$-tuple $\vec{v} = (v_1,\dots ,v_k)$ as being the function $i\mapsto v_i$. In this way, the natural projection from $V^k$ to $V^S$ becomes the restriction $\vec{v}\mapsto \vec{v}\mid S$ and the preimage of any set $G_S\subseteq V^S$ is denoted by
\[
\mathcal{K}_k(G_S) = \{\vec{v}\in V^k:(\vec{v}\mid S)\in G_S \}
\]
One may think of $G_S\subseteq V^S$ as a directed hypergraph (where vertices in the directed hyperedges are also allowed to repeat).

More generally, for a subset $\mathscr{S}\subseteq \mathcal{P}([k])$ of the power set of $[k]$ and a family $\mathscr{G} = \{G_S:S\in \mathscr{S}\}$ with $G_S \subseteq V^S$ for all $S\in \mathscr{S}$, let
\[
\mathcal{K}_k(\mathscr{G})= \bigcap \limits_{S\in \mathscr{S}} \mathcal{K}_k(G_S).
\]
If $H = (V,E)$ is a $k$-uniform hypergraph on $V$, then $e_H(\mathscr{G})$ denotes the cardinality of the set
\[
E_H(\mathscr{G})=\big \{(v_1,\dots ,v_k)\in \mathcal{K}_k(\mathscr{G}):\{ v_1,\dots ,v_k\}\in E\big \}.
\]

\begin{defi}{\rm (\cite[Definition 2.1]{Reiher_2017}).}
Let real numbers $p\in [0,1]$ and $\mu >0$, a $k$-graph $H=(V,E)$ and a set $\mathscr{S}\subseteq \mathcal{P}([k])$ be given. We say that $H$ is $(p,\mu ,\mathscr{S})$\emph{-dense} provided that
\[
e_H(\mathscr{G})\geq p|\mathcal{K}_k(\mathscr{G})|-\mu |V|^k
\]
holds for every family $\mathscr{G}=\{G_S:S\in \mathscr{S}\}$ associating with each $S\in \mathscr{S}$ some $G_S\subseteq V^S$.
\end{defi}

For example, if $k=3$ and $\mathscr{S}=\points =\big \{\{1\},\{2\}, \{3\} \big \}$, we can identify the sets $V^{\{1\}},V^{\{2\}},V^{\{3\}}\cong V$. If $H$ is $(p,\mu ,\points )$-dense, then for all sets $G_{\{1\}},G_{\{2\}},G_{\{3\}}\subseteq V$, there are at least
\[
p|G_{\{1\}}||G_{\{2\}}||G_{\{3\}}|-\mu |V|^3
\]
triples $(x,y,z)\in V^3$ such that $x\in G_{\{1\}}, y\in G_{\{2\}},z\in G_{\{3\}}$, and $\{x,y,z\}\in E$.
Similarly, for any integer $k\ge 2$, if we take $\mathscr{S} =\big \{\{1\}, \{2\},\dots, \{k\} \big \}$, then it is  convenient to identify the set
 $V^{\{i\}}\cong V$ for $i\in [k]$. These correspond to the definition of $(p,\mu )$-dense $k$-graphs from Definition \ref{def.}.

Now for $\mathscr{S}\subseteq \mathcal{P}([k])$ and $1\leq s \leq k-1$, define $\factor_k^{\mathscr{S} ,s}$ as the collection of $k$-graphs $F$ such that for all $0< p,\alpha<1$, there is some $n_0\in \mathbb{N}$ and $\mu > 0$ so that if $H$ is any $(p,\mu ,\mathscr{S})$-dense $k$-graph on $n$ vertices with $\delta _s(H)\geq \alpha n^{k-s}$, $n \geq n_0$ and $v(F)\mid n$, then $H$ has an $F$-factor.

The following problem generalizes Problem~\ref{prob}.
\begin{prob}\label{extend}
Given $\mathscr{S}\subseteq \mathcal{P}([k])$ and $1\leq s \leq k-1$, characterize the family $\factor_k^{\mathscr{S} ,s}$.
\end{prob}


\subsection{On generalizations of Theorem~\ref{k-graph}}
Throughout the rest of this section, write $\texttt{dots}:=\big \{\{1\}, \{2\},\dots, \{k\} \big \}$.
One can define $\cover_k^{\mathscr{S} ,s}$ similarly by replacing the property of having an ``$F$-factor'' by having an ``$F$-cover'' in the definition of $\factor_k^{\mathscr{S} ,s}$.
Our Theorem~\ref{k-graph} gives a reduction when $\mathscr{S} =\texttt{dots}$ and $s=1$, but actually the proof works for other (stronger) quasi-randomness as well, namely, we have $\factor_k^{\mathscr{S} ,1} = \cover_k^{\mathscr{S} ,1}$ for all $\mathscr{S}\subseteq \mathcal{P}([k])$.
However, we note that Theorem~\ref{k-graph} does not hold when $s>1$, even for the case $k=3$ and $\mathscr{S} =\texttt{dots}$.
For this, we shall describe another family of constructions.

Let us introduce some useful notation.
Given a $k$-graph $F$ with $k\ge 2$ and an integer $2\le s\le k-1$,
a bipartition $\mathcal{P}=\{A,B\}$ of $V(F)$ is \emph{$s$-shadow disjoint} if every two edges $e, e'\in E(F)$ with $\textbf{i} _{\mathcal{P}}(e)\neq \textbf{i} _{\mathcal{P}}(e')$ satisfy that $|e_1\cap e_2|<s$.
Then let $L^{s}_{F}$ be the lattice generated by all $\textbf{i} _{\mathcal{P}}(V(F))$ such that $\mathcal{P}$ is $s$-shadow disjoint and denote $\trans^{s}_k$ as the collection of $k$-graphs $F$ with $(1,-1)\in L^{s}_F$.

As an example, consider $F=K^{(3)}_{2,2,2}$, the complete $3$-partite $3$-graph with parts of size two, and we shall show that $K^{(3)}_{2,2,2}\notin \trans^2_3$.
Indeed, let $\mathcal{P}=\{A,B\}$ be a 2-shadow disjoint partition of $K^{(3)}_{2,2,2}$.
Take two edges $uvw, uvw'$ in $K^{(3)}_{2,2,2}$ so that $w$ and $w'$ are from the same part of the 3-partition.
Since these two edges share $s=2$ vertices, they must have the same index vector in $\mathcal{P}$, implying that $w$ and $w'$ must be both in $A$ or both in $B$.
Thus, we infer $(|A|,|B|)\in \{(0,6), (2,4), (4,2), (6,0)\}$, which implies that $(1,-1)\notin L^{2}_{F}$ with $F=K^{(3)}_{2,2,2}$.
On the other hand, it is not hard to prove that $K^{(3)}_{2,2,2}\in \cover_3^{\texttt{dots} ,2}$.
We shall prove in the following observation that $\factor_k^{{\tt dots} ,s} \subseteq \trans^{s}_k$.
Altogether we see that $K^{(3)}_{2,2,2}\notin \factor_3^{{\tt dots}, 2}$ (by $K^{(3)}_{2,2,2}\notin \trans^2_3$ and the observation) but $K^{(3)}_{2,2,2}\in \cover_3^{\texttt{dots} ,2}$, namely, Theorem~\ref{k-graph} does not hold if we replace the minimum degree condition by the minimum $s$-degree condition for $s>1$.




\begin{observation}\label{dot-codegree}
For $2\le s\le k-1$,
$\factor_k^{{\tt dots} ,s} \subseteq \trans^{s}_k$.
\end{observation}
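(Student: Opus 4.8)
The plan is to prove the contrapositive: assuming $F\notin\trans^s_k$, i.e.\ $(1,-1)\notin L^s_F$, I will exhibit a density $p^*$ and a constant $\alpha^*$, both depending only on $k$ and $s$, so that for every $\mu>0$ and every $n_0$ there is a $(p^*,\mu,{\tt dots})$-dense $k$-graph $H$ on some $n\ge n_0$ vertices with $f\mid n$ (here $f:=v(F)$), $\delta_s(H)\ge\alpha^*n^{k-s}$, and no $F$-factor; this shows $F\notin\factor_k^{{\tt dots},s}$. The construction is the $s$-uniform analogue of the one in the proof of Lemma~\ref{k-construction} (there the random colouring lives on pairs; here it must live on $s$-subsets, reflecting the weaker degree hypothesis). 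Concretely: fix a bipartition $\{A,B\}$ of the vertex set $V$, $|V|=n$, with $|A|$ to be chosen later and $|A|,|B|=\Theta(n)$; colour every $s$-subset $t\subseteq V$ independently and uniformly with a colour $\phi(t)\in\{|t\cap A|,|t\cap A|+1,\dots,|t\cap A|+(k-s)\}$; and put a $k$-set $e$ with $|e\cap A|=i$ into $E(H)$ exactly when $\phi(t)=i$ for every $s$-subset $t\subseteq e$. One checks that $i$ is always an admissible colour for every $s$-subset of such an $e$, so each $k$-set is an edge with the same probability $p^*:=(k-s+1)^{-\binom{k}{s}}$.

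The two features of this random $H$ I would extract are: (a) if $|e\cap e'|\ge s$ and both are edges then any $s$-subset $t\subseteq e\cap e'$ has $\phi(t)=|e\cap A|=|e'\cap A|$, forcing $\mathbf i_{\{A,B\}}(e)=\mathbf i_{\{A,B\}}(e')$, so in $H$ any two edges with distinct $\{A,B\}$-index vectors meet in fewer than $s$ vertices — hence the bipartition $\{A,B\}$ induces on any copy $F'$ of $F$ pulls back (through an isomorphism $F\to F'$) to an $s$-shadow disjoint bipartition $\mathcal P'$ of $V(F)$, whence $\mathbf i_{\{A,B\}}(V(F'))=\mathbf i_{\mathcal P'}(V(F))\in L^s_F$; and (b) the events $\{e\in E(H)\}$, $\{e'\in E(H)\}$ are independent when $|e\cap e'|<s$. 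Using (b): a first-moment computation together with a bounded-differences (Azuma) estimate over the independent colours $\phi(\cdot)$ gives $e_H(\mathscr G)\ge p^*|\mathcal K_k(\mathscr G)|-\mu n^k$ with probability $1-2\exp(-\Omega(n^s))$ for each family $\mathscr G=(G_{\{1\}},\dots,G_{\{k\}})$, and since $s\ge2$ this survives the union bound over the at most $2^{kn}$ choices of $\mathscr G$. Similarly, for a fixed $s$-set $t_0$, after conditioning on $\phi(t_0)$ the degree $\deg_H(t_0)$ has mean $\Theta(n^{k-s})$ with leading constant at least some $c=c(k,s)>0$, and concentrates by Janson's inequality; setting $\alpha^*:=c/2$, a union bound over all $s$-sets shows $\delta_s(H)\ge\alpha^*n^{k-s}$ with probability tending to $1$. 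Thus for $n$ large in terms of $\mu$ a suitable instance $H$ exists.

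To rule out $F$-factors I would choose $|A|$ via the lattice. The trivial partitions $\{\emptyset,V(F)\}$ and $\{V(F),\emptyset\}$ are vacuously $s$-shadow disjoint, so $(0,f),(f,0)\in L^s_F$; hence $L^s_F$ is a sublattice of $\Lambda:=\{(x,y)\in\mathbb Z^2:f\mid x+y\}$ containing $(f,0)$, and a short linear-algebra check (using $(0,f)\in L^s_F$) shows $(1,-1)\in L^s_F$ iff $d':=\gcd\{\,|A_j| : \{A_j,B_j\}\text{ an }s\text{-shadow disjoint bipartition of }V(F)\,\}$ equals $1$. By hypothesis $d'\ge2$ and $d'\mid f\mid n$. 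Picking $|A|\in\{\lceil n/2\rceil,\lceil n/2\rceil+1\}$ with $d'\nmid|A|$ (possible since $d'\ge2$ and these integers are consecutive) keeps $|A|,|B|=\Theta(n)$. If $H$ had an $F$-factor $\mathcal F$, then by (a), $(|A|,|B|)=\sum_{F'\in\mathcal F}\mathbf i_{\mathcal P'}(V(F))$ is a sum of vectors coming from $s$-shadow disjoint bipartitions; as $d'$ divides the first coordinate $|A_j|$ of each such vector, $d'\mid|A|$, a contradiction.

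The main obstacle I anticipate is not the lattice bookkeeping but the probabilistic verification that one single construction is simultaneously quasi-random against all adversarial tuples $\mathscr G$ (which pushes toward maximal randomness) and rigid enough to pin down the $\{A,B\}$-index of every embedded copy of $F$ (which pushes toward structure). The reconciliation is precisely to base the colouring on $s$-subsets with $2\le s\le k-1$: it is still "spread out" enough for $(p^*,\mu,{\tt dots})$-density — here $s\ge2$ is exactly what makes the concentration exponent $\Omega(n^s)$ beat $2^{kn}$ — yet $s\ge2$ lets two edges through a common $s$-set share a colour and thereby forces the $s$-shadow disjoint structure. The fiddliest single step is the $\delta_s$ lower bound, since all $(k-s)$-extensions of a fixed $s$-set $t_0$ are mutually dependent through $t_0$; conditioning on $\phi(t_0)$ first is the device that restores enough independence to apply Janson's inequality.
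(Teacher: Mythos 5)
Your proof is correct, and it takes a genuinely different route from the paper's. The paper argues the inclusion directly: given $F\in\factor_k^{{\tt dots},s}$, it builds a $(p,\mu,{\tt dots})$-dense $k$-graph with an $F$-factor (which must exist by hypothesis), notes that the rigidity property forces every copy of $F$ to contribute an $L^s_F$-vector, and so concludes $(n_1,n_2)\in L^s_F$; repeating the construction with part sizes $(n_1-1,n_2+1)$ and subtracting then gives $(1,-1)\in L^s_F$. You argue the contrapositive: from $(1,-1)\notin L^s_F$ you extract the gcd obstruction $d'=\gcd\{|A_j|\}\ge2$ (the equivalence $(1,-1)\in L^s_F\iff d'=1$ does need the trivial generators $(0,f),(f,0)$, which you correctly supply), choose $|A|$ with $d'\nmid|A|$, and then no $F$-factor can exist. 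This replaces the paper's pair of constructions with one construction plus a short lattice computation, and is arguably more explicit about \emph{why} the divisibility obstruction blocks the factor. The underlying random colouring of $s$-subsets is essentially the same object in both proofs; your adaptive palette of size $k-s+1$ and the paper's flat palette of size $k+1$ yield the same rigidity property (edges sharing $\ge s$ vertices get equal $\{A,B\}$-index), just with a slightly different density constant. On the probabilistic side the paper is no more rigorous than you are — it also just invokes Janson and a union bound — and your observation that the $s\ge2$ bounded-differences exponent $\Omega(n^s)$ beats the $2^{kn}$ union bound, and that conditioning on $\phi(t_0)$ is the device that makes the $\delta_s$ concentration tractable, is a genuine and correct refinement of what the paper leaves implicit.
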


\begin{proof}
Let $2\le s\le k-1$ and $F \in \factor_k^{{\tt dots} ,s}$ with $f := v(F)$.
We will prove $F \in \trans^{s}_k$ by the following construction.
For $n\in f\mathbb{N}$, define a probability distribution $H(n)$ on $k$-graphs of order $n$ as follows.
Let $K$ be the complete $s$-graph on $n$ vertices, and $\mathcal{P}=\{X, Y\}$ be a partition of $V(K)$ satisfying $\textbf{i} _{\mathcal{P}}(V(K))=(n_1,n_2)$ with $n_1,n_2\ge n/3$.
Define $\phi:E(K) \longmapsto \{c_0,c_1, \dots, c_{k}\}$ as a random $(k-1)$-coloring with each color associated to an edge with probability $\frac{1}{k+1}$ independently.
We say a clique in $K$ is {\emph {$c_j$-colored}} if each edge in this clique is colored by $c_j$, where $0\le j\le k$.
Let now the vertex set of $H(n)$ be $V(K)$ and include a set $e\in [V(K)]^{k}$ in $E(H(n))$ if $K[e]$ is a $c_{j}$-colored clique for $0\le j\le k$ with $j= |e\cap X|$.
By concentration inequalities (e.g. the Janson inequality) and the union bound, for sufficiently large $n$,
there exists a $k$-graph $H\in H(n)$ such that $H$ is $(p,\mu ,\tt dots)$-dense with $\delta _s(H)=\Omega (n^{k-s})$, where $p=({k+1})^{-\binom{k}{s}}$.
Moreover, $\mathcal{P}$ is an $s$-shadow disjoint bipartition of $V(H)$ since for every two edges $e, e'\in E(H)$ with $\textbf{i} _{\mathcal{P}}(e)\neq \textbf{i} _{\mathcal{P}}(e')$, $K[e]$ and $K[e']$ must have different colors, which implies that $|e\cap e'|<s$.

By $n\in f\mathbb{N}$ and the definition of $H$, $H$ has an $F$-factor $\mathcal F$.
For every copy $F'$ of $F$ in $H$, as $F'$ is a subgraph of $H$, the bipartition $\mathcal{P}'$ of $V(F')$ inherited from the bipartition $\mathcal{P}$ is also $s$-shadow disjoint, implying that $\textbf{i}_{\mathcal P}(V(F'))=\textbf{i}_{\mathcal P'}(V(F'))\in L_F^s$.
Summing up the $\textbf{i}_{\mathcal P}(V(F'))$ over all $F'\in \mathcal F$ gives that  $(n_1,n_2)\in L^s_F$.

Analogously we can define a new $k$-graph $H'$ on $n$ vertices satisfying $(p,\mu ,\tt dots)$-denseness and $\delta _s(H')=\Omega (n^{k-s})$,
which has an $s$-shadow disjoint bipartition $\mathcal{P}^*$ with $\textbf{i} _{\mathcal{P^*}}(V(H'))=(n_1-1,n_2+1)$.
The same argument shows that $(n_1-1,n_2+1)\in L^s_F$.
Therefore we obtain $(1,-1)\in L^s_F$, that is, $F\in \trans^s_k$.
\end{proof}

We remark that it is possible to use less colors (e.g. $k-1$ colors) in the above proof, to push up the density of the construction.


In summary, for $2\le s \le k-1$, we have  $\factor_k^{{\tt dots} ,s} \subseteq \cover_k^{{\tt dots} ,s} \cap \trans^s_k$. 


\medskip
On a different direction, given a $k$-graph $F$, it is natural to ask for a characterization of the pairs $p, \alpha>0$ so that every $(n,p,\mu,\alpha)$ $k$-graph has an $F$-factor, and further one may replace $(p,\mu)$-denseness by stronger quasi-randomness.
Lenz and Mubayi~\cite{Lenz2016Perfect} constructed an $(n,\frac{1}{8},\mu )$ 3-graph $H^*$ with $\delta _1(H^*)\geq (\frac{1}{8}-\mu )\binom{n}{2}$, $\delta _2(H^*)\geq (\frac{1}{8}-\mu ){n}$ and $H^*$ has no $F$-factor for those $F$ that there exists a partition of the vertices of $F$ into pairs such that each pair has a common edge in their links.
In particular, $K^{(3)}_{2,2,2}$ satisfies this property.
A natural starting point would be to understand the above case when $F=K^{(3)}_{2,2,2}$.
\begin{prob}
Let $p,\alpha>1/8$ be given, and let $\mu$ be sufficiently small and $n$ be sufficiently large.
Suppose $H$ is an $(n,p,\mu)$ $3$-graph with $6\mid n$ and $\delta_1(H)\ge \alpha \binom n2$.
Does every such $H$ have a $K^{(3)}_{2,2,2}$-factor?
\end{prob}

\section*{Acknowledgements}
The authors extend gratitude to the referee for their careful
peer reviewing and helpful comments. 



\bibliographystyle{abbrv}
\bibliography{ref,ref1}

\end{document}